\documentclass{amsart}

\usepackage[letterpaper,top=2cm,bottom=2cm,left=3cm,right=3cm,marginparwidth=1.75cm]{geometry}

\usepackage[utf8]{inputenc}
\usepackage{amsmath}
\usepackage{mathtools}   % loads »amsmath«
\usepackage{amsfonts}
\usepackage{listings}
\usepackage{cancel}
\usepackage{comment}
\usepackage{mathabx}
\usepackage{amssymb}

\usepackage{hyperref}
\usepackage{verbatim}

\theoremstyle{plain}   						
\newtheorem{theorem}{Theorem}[section]			
\newtheorem{lemma}[theorem]{Lemma} 		 	
\newtheorem{proposition}[theorem]{Proposition}	
\newtheorem{corollary}[theorem]{Corollary}

\theoremstyle{remark} 
\newtheorem{remark}{Remark}[section]

\theoremstyle{definition}
\newtheorem{definition}{Definition}[section]

\title{Instability of gray solitons in a Gross-Pitaevskii model with a moving impurity}
\author[]{Paolo Antonelli}
\address{Gran Sasso Science Institute, Viale Francesco Crispi, 7, 67100, L'Aquila, Italy}
\email{paolo.antonelli@gssi.it} 
\author[]{Martino Caliaro}
\address{Gran Sasso Science Institute, Viale Francesco Crispi, 7, 67100, L'Aquila, Italy}
\email{martino.caliaro@gssi.it}
\keywords{Gross-Pitaevskii equation, external potential, traveling waves, Evans function, spectral stability}
\subjclass{Primary: 35Q55, 35B35,. Secondary: 35B32, 37K45, 37K50.}

\begin{document}
\begin{abstract}
    The effect of a moving impurity in a dilute Bose-Einstein condensate is investigated by means of the one-dimensional Gross-Pitaevskii model (GP) with non-zero boundary conditions at infinity. The impurity is modeled as a localized external potential, that travels at constant speed $v \in \mathbf{R}$. In a co-moving reference frame, we study the existence and stability of time-independent solutions. The latter are of physical relevance, being associated with the superfluid behavior of the condensate. \\
    For every non-zero velocity $v$ in the subsonic regime, we show the existence of a family of time-independent solutions which bifurcates from a (displaced) gray soliton $\phi_{0,v}(x-s_0)$, with $s_0 \in \mathbf{R}$, of the GP equation. The position $s_0$ is determined as an extremal point of an \textit{effective potential} explicitly defined. Moreover, we study the spectral stability of these states. For small values of the potential strength, we show that the families originating from the maxima of the effective potential are spectrally unstable. For this last result, we employ an Evans function approach. Finally, we formally apply the instability result to the case of a repulsive delta potential.
\end{abstract}
\maketitle
\section{Introduction}
We consider the one dimensional Gross-Pitaevskii equation in the presence of an external potential $V$ moving with velocity $v \in \mathbf{R}$
\begin{equation}
    i\partial_t u -\frac{1}{2}\partial_x^2u -(1-|u|^2)u + \varepsilon V(x-vt)u = 0, \qquad x \in \mathbf{R}, \quad t \in \mathbf{R}.
    \label{eq: GP_V}
\end{equation}
Here $\varepsilon \in \mathbf{R}$ is the intensity of the potential $V$, and $V: \mathbf{R} \to \mathbf{R}$ is assumed to be a smooth function that decays exponentially fast at infinity. The field $u: \mathbf{R}_x\times \mathbf{R}_t \to \mathbf{C}$ is required to satisfy the asymptotic condition
\begin{equation}
    |u(x)|^2 \to 1 \qquad \text{as} \quad |x| \to \infty.
    \label{eq: condition_2}
\end{equation}
Equation \eqref{eq: GP_V} with condition \eqref{eq: condition_2} describes the motion of an impurity (modeled by V) in a quantum fluid, such as dilute Bose-Einstein condensates (BEC) or superfluid Helium, which is at rest at infinity with constant density. It has received a lot of attention from the physical community (see for example \cite{pham_2, pitaevskii, hakim1997nonlinear, Leboeuf, pavloff, pham-brachet, pham_3, saqlain_kevrekidis}) for its ability to describe certain fundamental phenomena associated with these fluids, such as the one of superfluidity \cite{primer_quantum_fluids, pitaevskii_stringari}.\\
If written in a co-moving reference frame, i.e. after the change of coordinates $x \to x-vt$, equation \eqref{eq: GP_V} reads
\begin{equation}
\tag{GP-V}
    i\partial_t u -\frac{1}{2}\partial_x^2u + iv\partial_xu -(1-|u|^2)u + \varepsilon V(x)u = 0.
    \label{eq: GP_V_main}
\end{equation}
This form of the equation, together with condition \eqref{eq: condition_2}, will be the subject of our studies.\\
Most of the previous research on \eqref{eq: GP_V_main} has focused on the existence of time independent solutions (also called steady flows). As we discuss later on, the interest in these solutions is motivated by their relation with the phenomenon of superfluidity. The latter, in the present setting, corresponds to a frictionless motion of the impurity through the quantum fluid (see \cite{Leboeuf, pavloff}). \\
When $\varepsilon=0$, the set of time-independent solutions to \eqref{eq: GP_V_main} is well known \cite{bethuel_existence}. For any $v \in (-1,1)$, there exists only two time-independent solutions, up to phase shifts and translations. These are given by the constant $u = 1$ and by the so called \textit{dark} soliton
\begin{equation}
    \phi_{0,v}(x) = \sqrt{1-v^2}\tanh(\sqrt{1-v^2}x)+iv, \qquad v \in (-1,1).
    \label{eq: stat_sol_intro}
\end{equation}
On the other hand, for $|v| \geq 1$ the only time-independent solution is the constant $u=1$. Dark solitons play a central role in our analysis. They have been extensively studied in the past \cite{bethuel_existence, chiron_paper} and they are usually split into two kinds: the \textit{black} soliton or \textit{kink}, corresponding to the case $v=0$; and the \textit{gray} solitons, corresponding to $v \in (-1,1)\backslash\{0\}$.  As shown by Lin in \cite{zhiwu} (see also \cite{barashenkov,bethuel_existence, chiron_paper}) and by Di Menza and Gallo in  \cite{gallo_dark} (see also \cite{bethuel, pelinovsky}), dark solitons are orbitally stable solutions to \eqref{eq: GP_V_main}.\\
The existence of time-independent solutions to \eqref{eq: GP_V_main} for $\varepsilon \neq 0$ has been studied by Pelinovsky and Kevrekidis in \cite{pelinovsky} for the case $v=0$. By means of a Lyapunov-Schmidt procedure, they show the existence of a family of time-independent solutions to \eqref{eq: GP_V_main}, which arise as regular perturbations of the black soliton $\phi_{0,0}$. Moreover, combining an Evans function approach with a matched asymptotic analysis, they show the linear instability of these states.\\
The case of our interest is the one in which $v \in (-1,1) \backslash\{0\}$. In this regime, the existence of time-independent solutions to \eqref{eq: GP_V_main} with $\varepsilon \neq 0$ has been studied by Hakim in \cite{hakim1997nonlinear} and by Mari\c{s} in \cite{maris2003}. For weak potentials, i.e. for $|\varepsilon|<<1$, Hakim used a formal perturbation argument to show, for each $v \in (-1,1)\backslash\{0\}$ and under a non-degeneracy condition, the existence of two families of time-independent solutions. As $\varepsilon$ varies from zero, one family bifurcates from the gray soliton $\phi_{0,v}$, and its elements we denote by $\phi_{\varepsilon,v}$; the other family bifurcates from the constant solution $u=1$.\\
The stability of these two families of solutions is investigated by Hakim by means of numerical simulations (see also \cite{pham-brachet, pham_3}). While the solutions belonging to the second family, i.e. the one bifurcating from $u=1$, appear to be stable under \eqref{eq: GP_V_main}, the states $\phi_{\varepsilon,v}$ appear to be unstable. The instability manifests in a decrease in the fluid density at the impurity location, followed by the emission of gray solitons in the downstream direction and sound waves in the upstream direction. This behavior is similar to the one observed in the instability of steady flows for the two dimensional version of equation \eqref{eq: GP_V_main}, with gray solitons replaced by quantum vortices \cite{frisch}.\\
The main goal of the present work is to prove the linear instability of the states $\phi_{\varepsilon,v}$, for each $v \in (-1,1) \backslash\{0\}$. The approach we follow consists in linearizing \eqref{eq: GP_V_main} around $\phi_{\varepsilon,v}$ and in studying the corresponding eigenvalue problem $\mathcal{L}_{\varepsilon}u = \lambda u$. The linear instability of $\phi_{\varepsilon,v}$ corresponds to the existence of an eigenvalue $\lambda \in \mathbf{C}$ with positive real part. Differently from the case $v=0$ treated in \cite{pelinovsky}, the matrix operator $\mathcal{L}_{\varepsilon}$ is not off-diagonal. As we will see, this is due to the presence of the drift term $iv\partial_xu$ in \eqref{eq: GP_V_main} and to the complex-valued nature of gray solitons. Because of this structure, the analysis of $\mathcal{L}_{\varepsilon}$ will present additional difficulties with respect to the case $v=0$.\\
The main tool we use in order to locate the eigenvalues of $\mathcal{L}_{\varepsilon}$ and to show the instability of $\phi_{\varepsilon,v}$ is the Evans function $E(\lambda, \varepsilon)$, as we will see in Section \ref{section: unperturbed_problem}. However, in order to fully utilize this tool, it is useful to have more detailed information on the family of solutions $\phi_{\varepsilon,v}$. For this reason, we begin by studying the existence and properties of the family of time-independent solution $\phi_{\varepsilon,v}$, providing in this way a rigorous justification of the formal results in \cite{hakim1997nonlinear}. Then, we proceed to study the instability of these states.

\subsection{\texorpdfstring{Existence and properties of the family $\phi_{\varepsilon,v}$}{Existence and properties of the family phi varepsilon,v}}
Following the ideas in \cite{pelinovsky}, we provide a proof of the existence of the family of time-independent solutions $\phi_{\varepsilon,v}$. Moreover, we show that these solutions share similar properties with gray solitons. These properties define a class of time-independent solutions to \eqref{eq: GP_V_main}. In analogy with the terminology used in \cite{pelinovsky}, we denote the elements of this class as \textit{gray modes}.
\begin{definition}
    Let $\varepsilon \in \mathbf{R}$ and $v \neq 0$. A \textit{gray mode} is time-independent solution $u:\mathbf{R} \to \mathbf{C}$ to \eqref{eq: GP_V_main} which is nowhere vanishing. It can be written as
    \begin{equation}
        u(x) = \rho(x)e^{i\theta(x)}e^{i\phi}, \qquad \phi \in \mathbf{R},
    \end{equation}
    for $\rho: \mathbf{R} \to \mathbf{R}_+$ and $\theta: \mathbf{R} \to \mathbf{R}$ smooth functions of their argument which converge exponentially fast to the asymptotic conditions
    \begin{equation}
        \lim_{|x| \to \infty}\rho(x) = 1 \quad \text{and} \quad \lim_{x \to \pm \infty}\theta(x) = \theta_{\pm}.
    \end{equation}
    Here, $\theta_{\pm} \in \mathbf{R}$ are the parameters of the solution.
    \label{def: traveling_mode}
\end{definition}
Our first main result shows the existence of a family of gray modes $\phi_{\varepsilon,v}$, which, as $\varepsilon$ varies from zero, bifurcates from the gray soliton $\phi_{0,v}(x-s_0)$, with $s_0 \in \mathbf{R}$. The position $s_0$ is determined as a non-degenerate critical point of the effective potential
\begin{equation}
    M(s) := \int_{\mathbf{R}}V(x)[1-|\phi_{0,v}|^2(x-s)]dx, \qquad s \in \mathbf{R},
\end{equation}
and may be not unique.
\begin{proposition}
Let $v \in (-1,1) \backslash \{0\}$ and let $s_0$ be a simple root of
\begin{equation}
    M'(s) = \int_{\mathbf{R}}V'(x)[1-|\phi_{0,v}|^2(x-s)]dx, \qquad s \in \mathbf{R},
    \label{eq: M'_intro}
\end{equation}
namely $M'(s_0) =0$ and $M''(s_0) \neq 0$, where $\phi_{0,v}(x)$ is defined in \eqref{eq: stat_sol_intro}. There exist $\varepsilon_0>0$ and a unique family $\phi_{\varepsilon,v}(x-s_\varepsilon)$ of gray modes of Definition \ref{def: traveling_mode} defined for $\varepsilon \in (-\varepsilon_0,\varepsilon_0)$ for which
\begin{equation}
    \lim_{\varepsilon \to 0} \ \bigl(||\phi_{\varepsilon,v} - \phi_{0,v}||_{L^{\infty}(\mathbf{R})} + |s_{\varepsilon}-s_0|\bigr)  = 0,
\end{equation}
and such that, for any $x \in \mathbf{R}$ fixed, the map $\varepsilon \to \phi_{\varepsilon,v}(x-s_{\varepsilon})$ is smooth in $(-\varepsilon_0,\varepsilon_0)$.
    \label{prop: persistence_intro}
\end{proposition}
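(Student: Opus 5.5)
We construct $\phi_{\varepsilon,v}$ by a Lyapunov--Schmidt reduction around the translated soliton, as in \cite{pelinovsky}, with the position $s$ as the bifurcation parameter.

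\textbf{Setting up the equation.} Time-independent solutions of \eqref{eq: GP_V_main} solve
\[
F(u,\varepsilon):=-\tfrac12 u''+ivu'-(1-|u|^2)u+\varepsilon V u=0 .
\]
We look for solutions of the form $u(x)=\phi_{0,v}(x-s)+w(x-s)$. Writing $y=x-s$, the perturbation $w$ solves
\[
L_v w = -\varepsilon V(y+s)\bigl(\phi_{0,v}+w\bigr)+N(w),
\]
where $N$ collects the quadratic and cubic terms. The linearized operator $L_v$ acts on $(\mathrm{Re}\,w,\mathrm{Im}\,w)$ as a real $2\times2$ system. We work in a space that allows $w$ to have nonzero limits at infinity, so that the phases $\theta_\pm$ can shift. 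A convenient choice is $w=\phi_{0,v}\,(\eta+i\chi)$ with $\eta\in H^2$ and $\chi'\in H^1$, i.e.\ the density/phase variables of Definition \ref{def: traveling_mode}.

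\textbf{Fredholm structure of $L_v$.} On this space we expect $L_v$ to be Fredholm of index zero. Its kernel should be spanned by the translation mode $\phi_{0,v}'$ and the phase mode $i\phi_{0,v}$. The phase mode is removed by fixing the global phase $\phi$, i.e.\ by quotienting out the gauge symmetry. Since $L_v$ is self-adjoint with respect to the symplectic pairing $\langle u,w\rangle=\mathrm{Re}\int u\bar w$, the range of $L_v$ is characterized by orthogonality to the kernel.

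\textbf{Main obstacle.} The hardest step is establishing this Fredholm property and identifying the kernel precisely. The difficulty is that the continuous spectrum of the linearization about a nonzero background touches $0$ (the sound mode). Moreover, for $v\neq0$ the operator $L_v$ is not block-diagonal, so the decoupling available for $v=0$ is lost.

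We would handle this in two steps. First, use the hydrodynamic variables: the phase equation is the conservation law
\[
(\rho^2\theta'-v\rho^2)'=0,
\]
which integrates once. The resulting constant is fixed by the asymptotics, so $\theta'$ can be eliminated. Second, what remains is a scalar Schrödinger-type equation for $\eta$ with an exponentially localized potential and a positive constant $1-v^2>0$ at infinity. For this operator, invertibility on the orthogonal complement of $\phi_{0,v}'$ is standard, using ODE/Wronskian arguments and the explicit bounded solutions.

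\textbf{Reduction and bifurcation.} Project onto the range and onto the complement. The implicit function theorem gives $w=w(\varepsilon,s)$, smooth in $(\varepsilon,s)$, with $\|w\|=O(\varepsilon)$. The remaining bifurcation equation is
\[
B(\varepsilon,s):=\bigl\langle F(\phi_{0,v}(\cdot-s)+w,\varepsilon),\,\phi_{0,v}'(\cdot-s)\bigr\rangle=0 .
\]
Expanding to first order,
\[
B(\varepsilon,s)=\varepsilon\,\mathrm{Re}\!\int V\,\phi_{0,v}(\cdot-s)\,\overline{\phi_{0,v}'(\cdot-s)}+O(\varepsilon^2)
=\tfrac{\varepsilon}{2}\int V\,\partial_x|\phi_{0,v}|^2(\cdot-s)+O(\varepsilon^2).
\]
After an integration by parts this equals $-\tfrac{\varepsilon}{2}M'(s)+O(\varepsilon^2)$.

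Divide by $\varepsilon$ and set $G(\varepsilon,s)=B/\varepsilon$, which extends smoothly to $\varepsilon=0$. Since $G(0,s_0)=0$ and $\partial_s G(0,s_0)=-\tfrac12 M''(s_0)\neq0$, the implicit function theorem gives a unique smooth curve $s_\varepsilon$ with $s_\varepsilon\to s_0$.

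\textbf{Remaining properties.} Nonvanishing of $u$ holds for small $\varepsilon$ because $|\phi_{0,v}|\ge|v|>0$ and $\|w\|_{L^\infty}=O(\varepsilon)$. Exponential convergence to the limits follows from the decay of $V$ together with ODE asymptotics. Uniqueness of the family comes from the uniqueness part of both applications of the implicit function theorem, and smoothness in $\varepsilon$ for fixed $x$ follows from the smoothness of $w(\varepsilon,s)$ and of $s_\varepsilon$.
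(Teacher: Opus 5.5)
Your proposal is correct and, once you integrate the conservation law $(\rho^2\theta'-v\rho^2)'=0$ to eliminate $\theta'=v(1-\rho^{-2})$, it is essentially the paper's proof. The paper likewise runs a Lyapunov--Schmidt reduction on the scalar $\rho$-equation in $H^2$, whose linearization $-\frac12\partial_x^2+W$ is self-adjoint with essential spectrum $[2-2v^2,\infty)$ and kernel spanned by $\partial_x\rho_{0,v}(\cdot-s)$; it then applies the implicit function theorem in $s$ using $M''(s_0)\neq0$ and recovers the phase by integration. Two harmless slips remain: in the scalar problem the solvability direction is $\rho_{0,v}'$ rather than $\phi_{0,v}'$, and since $\int V\,\partial_x|\phi_{0,v}|^2(\cdot-s)=M'(s)$, the leading term of the bifurcation function is $+\frac{\varepsilon}{2}M'(s)$, not $-\frac{\varepsilon}{2}M'(s)$.
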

As in \cite{pelinovsky}, we prove Proposition \ref{prop: persistence_intro} by means of a Lyapunov-Schmidt reduction method, this time applied to the hydrodynamical form of equation \eqref{eq: GP_V_main} (see Section \ref{section: existence_gray_modes}).\\
The non-degeneracy condition on the function $M'(s)$ in \eqref{eq: M'_intro} coincides with the one obtained by Hakim in \cite[Equation (28)]{hakim1997nonlinear}, and it arises as a Fredholm alternative. We can give to it a physical interpretation in terms of drag forces. Indeed, the quantity in \eqref{eq: M'_intro} is the average value of $V'(x)$ over the fluid wavefunction and it represents the drag force experienced by the impurity when the fluid has the configuration $\phi_{0,v}(\cdot-s)$ \cite[Section 2]{pavloff}. Then, the location $s_0$ from which gray modes bifurcate is the one at which the drag force $M'(s_0)$ vanishes. Notice that, in the particular case of even external potentials, i.e. if $V(x) = V(-x)$, we have that $s_0=0$ is always a root of $M'(s)$.\\
As already mentioned, the existence of time-independent solutions to \eqref{eq: GP_V_main}, such as gray modes, is associated with the phenomenon of superfluidity. 
Indeed, by their steady nature, these solutions conserve in time the linear momentum of the fluid in any region of space. As a consequence, if a quantum fluid occupies one of these configurations, the exchange of linear momentum between the impurity and the fluid is zero. The impurity is thus allowed to travel in absence of drag forces and this is interpreted as superfluid motion (see \cite[Section 2]{pavloff}). In Lemma \ref{lemma: no_drag}, we study the drag force for the case of gray modes and we show its zero value.
\begin{comment}
    Knowing the existence of a smooth family of traveling modes that bifurcates from a gray soliton solution is functional to the study of the stability of these states.
\end{comment}
\subsection{\texorpdfstring{Linear instability of the family $\phi_{\varepsilon,v}$.}{Linear instability of the family phi varepsilon,v.}} After we studied the existence and properties of the gray modes $\phi_{\varepsilon,v}(x-s_{\varepsilon})$, we turn to study their stability. Our goal is to show that they are spectrally unstable states. We do this by linearizing \eqref{eq: GP_V_main} around one of these states and by studying the corresponding eigenvalue problem $\mathcal{L}_{\varepsilon}u=\lambda u$. The eigenvalues of $\mathcal{L}_{\varepsilon}$ determine the stability of the wave. In particular, the instability of $\phi_{\varepsilon,v}(x-s_{\varepsilon})$ corresponds to the existence of an eigenvalue $\lambda \in \mathbf{C}$ of $\mathcal{L}_{\varepsilon}$ with positive real part. As we describe below, the Evans function is the tool we use to determine the existence of unstable eigenvalues.\\
When $\varepsilon=0$, due to the invariance of \eqref{eq: GP_V_main} under translations and phase shifts, $\lambda=0$ is a multiple eigenvalue of $\mathcal{L}_0$. When $\varepsilon$ acquires a non-zero value the translational symmetry is broken, and the zero eigenvalue associated with this symmetry splits into different eigenvalues. Our goal is to track the motion of these eigenvalues as $\varepsilon$ varies and to show that at least one of them enters the half-plane $\{\lambda \in \mathbf{C}, \ \Re \lambda >0\}$, leading to the instability of the gray mode $\phi_{\varepsilon,v}(x-s_{\varepsilon})$. As we will see, a main mathematical difficulty lies in the fact that the imaginary axis belongs to the essential spectrum of $\mathcal{L}_{\varepsilon}$, for any $\varepsilon$. Thus, the splitting of the zero eigenvalue is associated with an \textit{edge bifurcation}, i.e. to the  bifurcation of eigenvalues out of the essential spectrum \cite{kapitula_dark}.\\
The major tool we use in order to track the location of the eigenvalues of $\mathcal{L}_{\varepsilon}$ is the Evans function $E(\lambda,\varepsilon)$. This is a complex-valued function with the property that $E(\lambda,\varepsilon) =0$ if $\lambda$ is an isolated eigenvalue of $\mathcal{L}_{\varepsilon}$. A priori, the Evans function is defined only away from the essential spectrum of $\mathcal{L}_{\varepsilon}$, and for this reason it cannot be immediately used in order to describe an edge bifurcation. However, as shown by Kapitula and Sandstede in \cite{kapitula&sandstede} and by Gardner and Zumbrun in \cite{gardner_zumbrun} with the Gap Lemma, under some assumptions the Evans function can be extended analytically across the essential spectrum. Using this extension, we are able to study the bifurcation of the zero eigenvalue.\\
In order to compute the zeros of the Evans function we follow the approach by Kapitula and Rubin in \cite{kapitula_dark}, where the persistence and stability of the black soliton $\phi_{0,0}$ under non-hamiltonian perturbations is studied. We begin by defining the Evans function $E(\lambda, \varepsilon)$ associated with $\mathcal{L}_{\varepsilon}$ in a region that excludes the point $\lambda =0$ (and the whole imaginary axis). Then, we extend analytically the Evans function in the vicinity of $\lambda=0$. Here, differently from \cite{kapitula_dark} where $E(\lambda,\varepsilon)$ is extended on a 2-sheeted Riemann surface, the extension is done directly in a neighborhood of zero in $\mathbf{C}$ (see Lemma \ref{lemma: cont_evans}). Finally, we write a Taylor expansion of the Evans function at $\lambda =0$ and $\varepsilon=0$, explicitly computing the coefficients of the leading order terms. We obtain in this way the following proposition (see Section \ref{section: perturbed_case} for a more precise set of statements).
\begin{proposition}
    Let $\phi_{\varepsilon,v}(x-s_{\varepsilon})$ be the gray mode of Proposition \ref{prop: persistence_intro}, that bifurcates from the gray soliton $\phi_{0,v}(x-s_0)$, with $s_0 \in \mathbf{R}$ simple root of $M'(s)$. Let $\mathcal{L}_{\varepsilon}$ be the operator associated with the linearization of \eqref{eq: GP_V_main} around $\phi_{\varepsilon,v}(x-s_{\varepsilon})$, and let $E(\lambda, \varepsilon)$ be the Evans function associated with $\mathcal{L}_{\varepsilon}$. In the vicinity of $\lambda =0$ and $\varepsilon=0$, the Evans function admits the expansion
    \begin{equation}
        E(\lambda,\varepsilon) = -2P'_r(v)\lambda^3-2M''(s_0) \varepsilon \lambda + O(\lambda^4, \lambda^2\varepsilon,\lambda\varepsilon^2),
        \label{eq: Evans_exp_intro}
    \end{equation}
    where  
    \begin{equation}
       P'_r(v) = 4\sqrt{1-v^2} \qquad \text{and} \qquad   M''(s_0)=\int_{\mathbf{R}}V''(x)[1-|\phi_{0,v}(x-s_0)|^2]dx.
       \label{eq: der_renorm_mom_intro}
    \end{equation}
    \label{prop: evans_intro}
\end{proposition}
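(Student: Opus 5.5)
We will write the linearized problem as a first-order system and expand its Evans function in the style of Kapitula and Rubin \cite{kapitula_dark}. Split $u=\phi_{\varepsilon,v}+w$ with $w=w_1+iw_2$. The eigenvalue problem $\mathcal{L}_{\varepsilon}w=\lambda w$ becomes a $4\times 4$ system $Y'=A(x;\lambda,\varepsilon)Y$ with $Y=(w_1,w_2,w_1',w_2')$. As $x\to\pm\infty$, $A$ tends exponentially fast to constant matrices $A_\pm(\lambda)$. These are obtained from the linearization around the constant state $e^{i\theta_\pm}$ with drift $iv\partial_x$.

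The spatial eigenvalues $\mu$ of $A_\pm(\lambda)$ solve the dispersion relation $(\lambda - v\mu)^2 + \tfrac14\mu^4 + \mu^2=0$, up to the normalisation fixed by the paper. At $\lambda=0$ there are two roots that are $O(1)$: $\mu=\pm 2\sqrt{1-v^2}$. There is also a double root at $\mu=0$, which splits into two small roots $\mu\approx \lambda/(v\pm 1)$. Because $|v|<1$, these small roots have opposite signs for real $\lambda>0$. So for $\Re\lambda>0$ there is exactly one small decaying mode at each end, and it depends analytically on $\lambda$ near $0$, since the roots $\lambda/(v\pm1)$ are simple. This is what allows the extension of $E$ directly to a neighbourhood of $\lambda=0$, with no Riemann surface. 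We define $E(\lambda,\varepsilon)=\det\big(Y_1^-,Y_2^-,Y_1^+,Y_2^+\big)(0)$, where $Y_j^-$ decay at $-\infty$ and $Y_j^+$ at $+\infty$. In each pair, one solution is fast and one is the slow solution attached to the small root. The slow solutions are built from the Gap Lemma of \cite{gardner_zumbrun,kapitula&sandstede}, and this gives analyticity of $E$ in $(\lambda,\varepsilon)$ near $(0,0)$.

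Next we compute the Taylor coefficients. At $\varepsilon=0$, $E(\lambda,0)$ vanishes to high order at $\lambda=0$. Translation gives the solution $\partial_x\phi_{0,v}$, which decays at both ends. Phase rotation gives $i\phi_{0,v}$, which tends to the constants $ie^{i\theta_\pm}$; these are the $\lambda=0$ limits of the slow modes. Hence at $\lambda=0$ the four columns are linearly dependent twice, which forces $E(0,0)=\partial_\lambda E(0,0)=\partial_\lambda^2E(0,0)=0$. The $\lambda^3$ coefficient comes from the generalized kernel chain: $\partial_v\phi_{0,v}$, the Galilean/boost direction, solves $\mathcal{L}_0 w=\partial_x\phi_{0,v}$ up to constants. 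We expand the columns in $\lambda$ and reduce the determinant via Abel's formula and Wronskian-type identities to a solvability integral. That integral equals $-2\frac{d}{dv}P_r(v)$, where $P_r$ is the renormalized momentum of the gray soliton; its derivative is the stated $4\sqrt{1-v^2}$.

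For $\partial_\varepsilon$ we need $E(0,\varepsilon)=0$ for all $\varepsilon$, since phase invariance persists. We also need $\partial_\lambda\partial_\varepsilon E(0,0)=-2M''(s_0)$. This comes from perturbing the translation mode. Differentiate the existence equation of Proposition \ref{prop: persistence_intro} in $x$: $\partial_x\phi_{\varepsilon,v}$ now satisfies $\mathcal{L}_\varepsilon\partial_x\phi=-\varepsilon V'(x-s_\varepsilon)\phi$. Projecting onto the adjoint kernel element and using $M'(s_0)=0$ together with the $s$-derivative of the Fredholm condition produces $M''(s_0)$. The remaining mixed terms $\lambda^2\varepsilon$ and $\lambda\varepsilon^2$ go into the error.

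The main obstacle is the $\lambda^3$ coefficient. Because $\mathcal{L}_0$ is not off-diagonal, the Jordan chain and the choice of adjoint vectors must be done carefully. The slow modes also contribute $\lambda$-dependence through their asymptotic phases. I would handle this first by working in hydrodynamic variables (density and phase gradient), where the slow modes become explicit. I would then compute the determinant to third order column by column and match the resulting integrals with $\frac{d}{dv}$ of the momentum identity for $\phi_{0,v}$.
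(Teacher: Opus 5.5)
Your overall architecture is the paper's: first-order system, analytic splitting of the double spatial eigenvalue, continuation of $E$ into a full neighbourhood of $0$, and expansion of the Wronskian around $Y_f^\pm(0,\cdot)=u_1$, $Y_s^\pm(0,\cdot)=u_3$. Two points in your setup are sound. First, if your ``simple roots'' remark is read as ``$\gamma/\lambda$ solves a perturbed quadratic with simple roots $1/(1-v)$, $-1/(1+v)$'', it is a quicker route to Lemma~\ref{lemma: regular_roots} than the monodromy argument. Second, $E(0,\varepsilon)\equiv0$ does follow from the phase mode. However, the two steps that carry the content of the statement do not go through as written.

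\emph{The triple zero.} It does not follow from the columns being ``linearly dependent twice''. A two-dimensional kernel at $\lambda=0$ only gives $E(\lambda,0)=O(\lambda^2)$. Writing $E=\det(Y_s^--Y_s^+,\,Y_f^--Y_f^+,\,Y_s^+,\,Y_f^+)$, one has $\tfrac12\partial_\lambda^2E(0,0)=\det\bigl(\partial_\lambda(Y_s^--Y_s^+),\,\partial_\lambda(Y_f^--Y_f^+),\,u_3,\,u_1\bigr)(0)$. Here $\partial_\lambda(Y_s^--Y_s^+)(0,0)$ has a nonzero component along the growing mode $u_4$. The determinant therefore vanishes only because $\partial_\lambda(Y_f^--Y_f^+)(0,0)=\frac{2}{\sqrt{1-v^2}}u_3(0)$, as in \eqref{der_fast}. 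That identity holds because the Melnikov integral $\int_{\mathbf{R}}\partial_\lambda M(0,x)u_1\cdot u_2^A\,dx$ vanishes and the differentiated system has the bounded solution $[\partial_v\Re\phi_0,1,(\partial_v\Re\phi_0)',0]^T$ coming from $\partial_v\phi_{0,v}$. You invoke this Jordan chain only for the cubic coefficient, but it is already needed to kill the quadratic one.

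\emph{The two coefficients.} Neither is actually derived, and the missing ingredient is the same for both: the $\lambda$-derivative of the slow modes. Since $\gamma_{2,3}(0)=0$, $\partial_\lambda Y_s^\pm(0,\cdot)$ cannot be required to decay. The paper factors out $e^{\gamma_{3,2}(\lambda)x}$ and solves for the correction using the adjoint solutions $u_2^A$ (decaying) and $u_4^A$ (non-decaying). The $u_4$-coefficient of $\partial_\lambda(Y_s^--Y_s^+)(0,0)$ then comes out as the boundary term $\partial_\lambda v_2^-(0)\cdot u_4^A(-\infty)-\partial_\lambda v_3^+(0)\cdot u_4^A(+\infty)=\frac{2}{1-v^2}$, which is fixed by the $\lambda$-derivatives of the normalised asymptotic eigenvectors. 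This factor enters both leading coefficients.
\begin{itemize}
\item For the mixed term, $\partial_\lambda\partial_\varepsilon E(0,0)=\frac{2}{1-v^2}\cdot(1-v^2)M''(s_0)\cdot\det(u_4,u_2,u_3,u_1)(0)=-2M''(s_0)$. Your Melnikov computation (differentiate the profile equation in $x$, project onto the adjoint translation mode) is sound and matches the paper's manipulation. But it supplies only the second factor, the $u_2$-component $(1-v^2)M''(s_0)$ of $\partial_\varepsilon(Y_f^--Y_f^+)(0,0)$.
\item For the cubic term, $\partial_\lambda^3E(0,0)$ is not a single solvability integral. It is the sum of $3\det\bigl(\partial_\lambda(Y_s^--Y_s^+),\partial_\lambda^2(Y_f^--Y_f^+),u_3,u_1\bigr)$, which uses the second-order fast-mode coefficient $4(1-v^2)^{3/2}$, and a multiplicity-six cross term. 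The cross term uses the $u_2$-coefficient $-(1-v^2)^2$ of $\partial_\lambda Y_s^+$ alone, which is a half-line integral. Each of the two pieces equals $-24\sqrt{1-v^2}$.
\end{itemize}
Asserting that ``a solvability integral equals $-2P_r'(v)$'' skips all of this. Moreover, the constants $-2$ depend on the normalisation of $v^\pm_{2,3}(\lambda)$ (first component $-v$, resp.\ $-\omega$), which your sketch never fixes, so at best it could yield the ratio of the two coefficients.

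The proposed hydrodynamic shortcut gives no mechanism by which the slow modes for $\lambda\neq0$ become explicit. Any change of dependent variables would also alter the Evans function and its asymptotic normalisation, and would have to be tracked back to the statement's setting. A minor point: your dispersion relation should read $(\lambda-v\mu)^2+\tfrac14\mu^4-\mu^2=0$; with $+\mu^2$ the $\lambda=0$ roots would be $\pm2i\sqrt{1+v^2}$.
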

By setting to zero the expansion in \eqref{eq: Evans_exp_intro} up to first orders, we obtain an approximate value of the zeros of $E(\lambda,\varepsilon)$. We are able in this way to study, as $\varepsilon$ is varied, the bifurcation of the zero eigenvalue of $\mathcal{L}_0$ associated to the translational symmetry.
\begin{corollary}
    The operator $\mathcal{L}_{\varepsilon}$, arising from the linearization of \eqref{eq: GP_V_main} around $\phi_{\varepsilon,v}(x-s_{\varepsilon})$, admits a pair of eigenvalues that satisfy
    \begin{equation}
        \lambda^2 = - \frac{M''(s_0)}{P'_r(v)} \varepsilon + O(\varepsilon^{3/2}).
        \label{eq: unst_eigenvalue_intro}
    \end{equation}
     If, without loss of generality, we assume $\varepsilon>0$, the gray mode $\phi_{\varepsilon,v}(x-s_{\varepsilon})$ is unstable for $M''(s_0)<0$ and $\varepsilon$ small enough.
    \label{prop: instability_intro}
\end{corollary}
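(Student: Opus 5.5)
Starting from the expansion of Proposition \ref{prop: evans_intro}, the plan is to locate the zeros of $E(\lambda,\varepsilon)$ near $(0,0)$ other than the trivial one at $\lambda=0$, using a rescaling argument combined with the implicit function theorem (or Rouché's theorem).

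First we factor out $\lambda$. The expansion shows $E(\lambda,\varepsilon)=\lambda\, F(\lambda,\varepsilon)$, with $F$ analytic in $\lambda$ near zero; this holds because the remainder $O(\lambda^4,\lambda^2\varepsilon,\lambda\varepsilon^2)$ has every term divisible by $\lambda$. We need to check from Lemma \ref{lemma: cont_evans} that $E(0,\varepsilon)\equiv 0$ for all small $\varepsilon$, which should follow from the persisting phase-shift symmetry. Then
\begin{equation*}
F(\lambda,\varepsilon) = -2P'_r(v)\lambda^2 - 2M''(s_0)\varepsilon + O(\lambda^3,\lambda\varepsilon,\varepsilon^2).
\end{equation*}
Next we rescale by $\lambda=|\varepsilon|^{1/2}\mu$ and set $G(\mu,\varepsilon):=|\varepsilon|^{-1}F(|\varepsilon|^{1/2}\mu,\varepsilon)$. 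This gives
\begin{equation*}
G(\mu,\varepsilon)=-2P'_r(v)\mu^2-2M''(s_0)\,\mathrm{sgn}(\varepsilon)+O(|\varepsilon|^{1/2}),
\end{equation*}
uniformly for $\mu$ in compact sets. The limiting quadratic has two simple roots $\mu_\pm=\pm\sqrt{-M''(s_0)\,\mathrm{sgn}(\varepsilon)/P'_r(v)}$. These are nonzero because $M''(s_0)\neq0$ and $P'_r(v)=4\sqrt{1-v^2}>0$. Rouché's theorem on small discs around $\mu_\pm$ then yields exactly two zeros $\mu_\pm(\varepsilon)=\mu_\pm+O(|\varepsilon|^{1/2})$. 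Squaring and multiplying by $|\varepsilon|$ gives $\lambda^2=-\frac{M''(s_0)}{P'_r(v)}\varepsilon+O(\varepsilon^{3/2})$, which is \eqref{eq: unst_eigenvalue_intro}.

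Finally, for $\varepsilon>0$ and $M''(s_0)<0$ we have $\lambda^2>0$ to leading order, so one root has $\Re\lambda\ge c\varepsilon^{1/2}>0$. A zero of the Evans function with $\Re\lambda>0$ lies outside the essential spectrum, which is the imaginary axis. There $E$ is the original, unextended Evans function, whose zeros are genuine isolated eigenvalues of $\mathcal{L}_{\varepsilon}$ with $L^2$ eigenfunctions. This gives spectral instability.

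The main obstacle is the distinction between genuine eigenvalues and zeros of the analytic extension. In the stable case $M''(s_0)\varepsilon>0$ the roots are near the imaginary axis and may be resonances on the extension rather than eigenvalues. For the instability claim this does not matter, since the relevant root lies in the open right half-plane where no extension is needed. The only other delicate point is confirming that the remainder is uniform on the rescaled compact set, which follows from the joint analyticity and smoothness of $E$ in $(\lambda,\varepsilon)$ given by Lemma \ref{lemma: cont_evans}.
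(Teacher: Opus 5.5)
Your proposal is correct and follows the paper's route. The paper only factors $\lambda$ out of the expansion in Proposition \ref{prop: expansion_2} and solves the leading-order balance $-2P'_r(v)\lambda^2-2M''(s_0)\varepsilon=0$ formally; your rescaling $\lambda=|\varepsilon|^{1/2}\mu$ with Rouch\'e's theorem, your check that $E(0,\varepsilon)\equiv 0$ via the persisting phase mode, and your remark that only the root in $\{\Re\lambda>0\}$ is automatically a genuine eigenvalue make that rigorous.

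If you also want the full ``pair of eigenvalues'' in the unstable case, obtain the partner $-\lambda$ from the Hamiltonian symmetry $\mathcal{L}_\varepsilon^*=-J\mathcal{L}_\varepsilon J^{-1}$. Together with the reality of $\mathcal{L}_\varepsilon$, this gives $\sigma(\mathcal{L}_\varepsilon)=-\sigma(\mathcal{L}_\varepsilon)$. Do not take the partner from the left-half-plane zero of the continued Evans function: it is not a priori an eigenvalue, because there $\Re\gamma_2(\lambda)<0$ and the slow mode $Y_s^-$ grows at $-\infty$.
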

\begin{remark}
    The quantity $P'_r(v)$ that appears in Proposition \ref{prop: evans_intro} and in Corollary \ref{prop: instability_intro} is the derivative with respect to $v$ of the \textit{renormalized momentum} of $\phi_{0,v}$. The latter is defined as
\begin{equation}
    P_r(v) := \frac{i}{2}\int_\mathbf{R}\bigl(\overline{\phi}_{0,v}\phi'_{0,v} - \phi_{0,v}\overline{\phi}'_{0,v}\bigr)\Bigl(1-\frac{1}{|\phi_{0,v}|^2}\Bigr)dx, \qquad v \in (-1,1)\backslash\{0\}.
\end{equation}
By explicit computations, one can verify the first identity in \eqref{eq: der_renorm_mom_intro}. The derivative $P'_r(v)$ plays an important role in the stability analysis of gray solitons: it determines the stability criterion for these waves, as proved by Lin \cite{zhiwu} and Barashenkov \cite{barashenkov}. According to this criterion, the stability of gray solitons follows from the condition $P'_r(v)>0$. Moreover, as proven in \cite{pelinovsky}, the condition $P'_r(v)|_{v\downarrow0}>0$ determines the stability of the black soliton $\phi_{0,0}$. Here, the quantity $P'_r(v)$ appears as the coefficient of the leading order term in the expansion \eqref{eq: Evans_exp_intro}.
\label{remark: ren_mom}
\end{remark}
The result in Corollary \ref{prop: instability_intro} implies that, for $\varepsilon>0$ small enough, the gray modes $\phi_{\varepsilon,v}(x-s_{\varepsilon})$ are linearly unstable solutions to \eqref{eq: GP_V_main} whenever $M''(s_0) <0$. As we will discuss, a physically meaningful example of an external potential $V$ for which the instability condition is verified is given by
\begin{equation}
    V_1(x) = \frac{1}{\sqrt{2\pi\sigma^2}}e^{-\frac{x^2}{2\sigma^2}}   \qquad x \in \mathbf{R},
    \label{eq: pot_laser}
\end{equation}
for $\sigma >0$. The physical relevance of $V_1$ is due to the fact that, in many experiments, the effect of a traveling impurity in a BEC is studied by moving a laser beam through the condensate \cite{onofrio, raman}. Then, the potential $V_1$ is often used to model the repulsive effect of the laser on the condensate \cite{hakim1997nonlinear,saqlain_kevrekidis}.\\
We notice that for the case $M''(s_0)>0$, Corollary \ref{prop: instability_intro} does not allow to conclude the stability or instability of the gray modes $\phi_{\varepsilon,v}(x-s_{\varepsilon})$. In order to determine the stable or unstable nature of these waves, it is necessary to compute higher order terms in the expansion \eqref{eq: unst_eigenvalue_intro}. This can be done either by the asymptotic matching approach in \cite{pelinovsky}, or by computing higher order terms in the Taylor expansion of the Evans function $E(\lambda, \varepsilon)$, as in \cite{kapitula_dark}. However, the computations become more involved, and the stability problem for the case $M''(s_0)>0$ is left for future research.\\
Finally we notice that, in the limit $v \to 0$, the result in Corollary \ref{prop: instability_intro} coincides, up to first order, with the one obtained in \cite[Theorem 4.11]{pelinovsky}.\\
\subsection{The case of a short range external potential} The case in which $V$ is given by a repulsive delta potential, i.e. when $\varepsilon V(x) = g \delta(x)$ for $g >0$, and for which \eqref{eq: GP_V_main} reads
\begin{equation}
\tag{GP-$\delta$}
    i\partial_t u -\frac{1}{2}\partial_x^2u + iv\partial_xu -(1-|u|^2)u + g \delta(x)u = 0.
    \label{GP_delta_intro}
\end{equation}
is of particular interest. Indeed, in this case sharp existence conditions and explicit expressions for time-independent solutions are available, as shown by Hakim \cite{hakim1997nonlinear} and Mari\c{s} \cite{maris2003} (see also \cite{Leboeuf,pavloff, pham-brachet, LeCozIR}). Similarly to the case of smooth potentials, time-independent solutions to \eqref{GP_delta_intro} do not exist for velocities $|v| \geq 1$. For any $v \in (-1,1)\backslash\{0\}$, instead, there are two families of time-independent solutions: as $g>0$ varies from zero, one family bifurcates from the gray soliton $\phi_{0,v}$, and its elements we denote by $\phi_{g,v}$; the other family bifurcates from the constant state $u=1$. The numerical simulations performed by Hakim suggest that the time-independent solutions belonging to the second family, i.e. the one bifurcating from $u=1$, are stable under \eqref{GP_delta_intro}; on the other hand, the states $\phi_{g,v}$ appear to be unstable. In our previous work \cite{antonelli_caliaro}, we showed the orbital stability of the states belonging to the second family. Here, we try complement the analysis by showing the linear instability of $\phi_{g,v}$, at least for $g>0$ small enough. The strategy is to \textit{formally} apply the instability result obtained for the case of a smooth potential $V$ to the case of a delta potential. In order to make the argument rigorous, one can consider a sequence of smooth potentials $\{V_n\}_{n \in \mathbf{N}}$ which approximates the delta distribution as $n \to \infty$, and show the convergence of the eigenvalues of the operator $\mathcal{L}_{\varepsilon,n}$ to the ones associated to $\mathcal{L}_g$, as $n \to \infty$. The result we obtain is expressed in the following proposition (we refer to Section \ref{section: delta_potential} for a more complete statement).
\begin{proposition}
    Let $v \in (-1,1)\backslash\{0\}$, and let $\phi_{g,v}$ be the family of time-independent solutions to \eqref{GP_delta_intro}, that bifurcates from the gray soliton $\phi_{0,v}$, obtained in \cite{hakim1997nonlinear}. Let $\mathcal{L}_g$ be the operator associated with the linearization of \eqref{GP_delta_intro} around $\phi_{g,v}$. Then, $\mathcal{L}_{g}$ admits a pair of eigenvalues $\lambda \in \mathbf{C}$ which satisfy
    \begin{equation}
        \lambda^2 = -\frac{M_{\delta}''(0)}{P'_r(v)}g + O(g^{3/2})
        \label{eq: formula_delta}
    \end{equation}
    Here, 
    \begin{equation}
        M_{\delta}''(0) = \int_{\mathbf{R}}\delta(t)\ \partial_t^2\bigl(1-|\phi_{0,v}(t)|^2\bigr)\ dt = -2(1-v^2)^2.
    \end{equation}
    In particular, for any $g>0$ small enough, the states $\phi_{g,v}$ are linearly unstable solutions to \eqref{GP_delta_intro}.
    \end{proposition}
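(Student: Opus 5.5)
The plan is to transfer Corollary \ref{prop: instability_intro}, formally, from smooth potentials to the delta potential by identifying the limits of the two coefficients in the Evans function expansion of Proposition \ref{prop: evans_intro}. Replace $\varepsilon V$ by $g V_n$, where $V_n(x) = nV_1(nx)$ (or any even, smooth, exponentially decaying mollifier with $\int V_n = 1$), so that $V_n \to \delta$ in the sense of distributions. Since $V_n$ is even, $s_0=0$ is a root of $M_n'$. Moreover,
\[
M_n''(0)=\int V_n(x)\,\partial_x^2\bigl(1-|\phi_{0,v}(x)|^2\bigr)\,dx \longrightarrow \partial_x^2\bigl(1-|\phi_{0,v}|^2\bigr)\big|_{x=0}=M_\delta''(0).
\]
Thus, for $n$ large, $M_n''(0)\neq 0$ and $0$ is a simple root. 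Proposition \ref{prop: persistence_intro} then gives the gray modes $\phi^{(n)}_{g,v}$, and Corollary \ref{prop: instability_intro} gives eigenvalues with $\lambda^2=-M_n''(0)g/P_r'(v)+O(g^{3/2})$.

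Second, compute the constant explicitly. From \eqref{eq: stat_sol_intro},
\[
|\phi_{0,v}(x)|^2=(1-v^2)\tanh^2(\sqrt{1-v^2}\,x)+v^2,
\qquad\text{so}\qquad
1-|\phi_{0,v}|^2=(1-v^2)\,\mathrm{sech}^2(\sqrt{1-v^2}\,x).
\]
Using $(\mathrm{sech}^2)''(0)=-2$ and the chain rule, the second derivative at $0$ is $(1-v^2)\cdot(1-v^2)\cdot(-2)=-2(1-v^2)^2$. This is negative, so with $P_r'(v)=4\sqrt{1-v^2}>0$ we get $\lambda^2>0$ to leading order. This yields a positive real eigenvalue, i.e.\ instability for all small $g>0$.

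Third, and this is the main obstacle, one must justify that the limit $n\to\infty$ commutes with the small-$g$ expansion. That is, one needs to show that the $O(g^{3/2})$ remainder and the threshold $\varepsilon_0$ are uniform in $n$, and that the eigenvalues of $\mathcal{L}_{g,n}$ converge to those of $\mathcal{L}_g$. My first attempt would be to work directly with the Evans function for \eqref{GP_delta_intro}: on each half-line $x\gtrless0$ the profile $\phi_{g,v}$ is an explicit translated gray soliton (Hakim's formulas), glued by the jump condition $\partial_x u(0^+)-\partial_x u(0^-)=2g\,u(0)$. The Evans function then becomes a Wronskian of decaying solutions built from the known unperturbed eigenfunctions, with a jump matrix of size $O(g)$ at $x=0$. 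Expanding around $\lambda=0$, $g=0$ as in Proposition \ref{prop: evans_intro} should reproduce the $-2P_r'(v)\lambda^3$ term unchanged. The $\varepsilon\lambda$ coefficient arises only through the pairing of the potential with the translation mode, and this pairing becomes the point evaluation giving $M_\delta''(0)$.

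The remainder estimates are the part I would only carry out formally, consistent with the statement's "formal" character. The fallback would be uniform bounds on $V_n$-dependent terms through $\|V_n\|_{L^1}=1$ and the exponential decay of the eigenfunctions, which control every pairing that appears in the Taylor coefficients independently of $n$.
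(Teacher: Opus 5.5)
Your argument is correct at the same formal level as the paper's. You get the same constant $M_\delta''(0)=-2(1-v^2)^2$ and the same sign conclusion, but your main line is not the one the paper follows.

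The mollification $V_n\to\delta$ appears in the paper only in the introduction, as the route one would take to make the result rigorous. In Section~\ref{section: delta_potential} the paper instead works directly with Hakim's explicit family $\phi_{\xi,v}$. It uses the displacement $\xi$, not $g$, as the small parameter, with $g=g(\xi)$ given by \eqref{gamma_xi}. It checks by hand the inputs that the computation of Section~\ref{section: perturbed_case} needs: pointwise smoothness of $\Re\phi_\xi,\Im\phi_\xi$ in $\xi$ (after extending the family to $\xi\le 0$), and exponential convergence to limits $\omega(\xi)$, $\pm\sqrt{1-\omega^2(\xi)}$ that are smooth in $\xi$. It then formally runs the Evans-function expansion with $g(\xi)\delta$ in place of $\varepsilon V$ and $M_\delta(s)=1-|\phi_0(s)|^2$ in place of $M(s)$. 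This yields $\lambda^2=-M_\delta''(0)g'(0)\xi/P_r'(v)+O(\xi^{3/2})$ with $g'(0)=(1-v^2)^2/v^2$. Only at the end is this rewritten in terms of $g$, the $g$-form being preferred because $g'(0)$ blows up as $v\to0$.

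The two routes trade off as follows.
The paper's route needs no Lyapunov--Schmidt step and no limit $n\to\infty$. It also needs no identification of the limit of your mollified gray modes with Hakim's small-$\xi$ branch, as opposed to the branch near $u=1$. Its formal step is applying to a distribution a computation derived for smooth $V$, namely \eqref{eq: r} and the integration by parts against $V''$.
Your route uses the smooth results verbatim and reduces the missing ingredient to one clearly stated uniformity-in-$n$ problem. The direct jump-condition Evans function in your third paragraph is essentially the paper's approach made concrete.

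One caution on that paragraph. The $\varepsilon\lambda$ coefficient does not in general come only from pairing the potential with the translation mode. In the paper's expression for $[c_2^--c_2^+](0)$, the terms containing the profile corrections $r,\psi$ equal $2\int\phi_0''\,V\,\Re\phi_0\,dt$. This term drops out for $V=\delta$ only because $\phi_0''\Re\phi_0$ vanishes at the soliton core. That is why the bare pairing $2(\phi_0'(0))^2=2(1-v^2)^2=-M_\delta''(0)$ gives the right answer in this particular case.
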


\section{Existence of gray modes}
\label{section: existence_gray_modes}
In this section we study the existence of gray modes for \eqref{eq: GP_V_main}, arising as regular perturbations of the gray solitons in \eqref{eq: stat_sol_intro}. As detailed in Definition \ref{def: traveling_mode}, gray modes are nowhere vanishing solutions to the stationary equation 
\begin{equation}
    -\frac{1}{2}\partial_x^2u + iv\partial_xu -(1-|u|^2)u + \varepsilon V(x)u = 0,
    \label{eq: GP_V_main_stat}
\end{equation}
and admit well-defined limits at $x=\pm \infty$.
Our approach to prove the existence of such solutions relies on the the method of Lyapunov-Schmidt reduction, similarly to \cite{pelinovsky}. However, instead of working directly with equation \eqref{eq: GP_V_main_stat}, we work with its  hydrodynamical formulation. More precisely, we proceed as follows. A nowhere vanishing function $u(x) = \rho(x)e^{i\theta(x)}$, satisfying $\rho(x) \to 1$ and $\partial_x\theta(x) \to 0$ as $|x| \to \infty$, is a solution of \eqref{eq: GP_V_main_stat} if and only if $(\rho,\theta)$ satisfy the following system 
\begin{equation}
    \partial_x\theta(x) = v\Bigl(1-\frac{1}{\rho^2(x)}\Bigr).
    \label{eq: theta}
\end{equation}
and
\begin{equation}
    -\frac{1}{2}\partial_x^2\rho + \frac{v^2}{2}\Bigl(-\rho + \frac{1}{\rho^3}\Bigr) - (1-\rho^2)\rho + \varepsilon V(x)\rho =0.
    \label{eq: rho}
\end{equation}
In order to prove the existence of gray modes for \eqref{eq: GP_V_main}, we apply the Lyapunov-Schmidt reduction method to equation \eqref{eq: rho}. Under a non-degeneracy condition, we obtain a family of solutions $\rho_{\varepsilon,v}$ to \eqref{eq: rho}, satisfying $\rho_{\varepsilon,v}(x) \to 1$ as $|x| \to \infty$, that bifurcates from the state
\begin{equation}
    \rho_{0,v}(x-s_0) = \sqrt{v^2+(1-v^2)\tanh^2\bigl(\sqrt{1-v^2}(x-s_0)\bigr)}, \qquad s_0 \in \mathbf{R}.
    \label{eq: rho_0(x)}
\end{equation}
Upon defining $\theta_{\varepsilon,v}$ as in \eqref{eq: theta}, the function $u_{\varepsilon,v}(x):=\rho_{\varepsilon,v}(x)e^{i\theta_{\varepsilon}(x)}$ defines a gray mode for \eqref{eq: GP_V_main_stat}, bifurcating from the gray soliton $\phi_{0,v}(x-s_0)$.\\ We begin by constructing the family $\rho_{\varepsilon,v}$ in the next proposition.
\begin{proposition}
Let $s_0$ be a simple root of
\begin{equation}
    M'(s) = \int_{\mathbf{R}}V'(x)[1-\rho_{0,v}^2(x-s)]dx, \qquad s \in \mathbf{R},
    \label{eq: M'}
\end{equation}
namely $M'(s_0) =0$ and $M''(s_0) \neq 0$, where $\rho_{0,v}(x)$ is defined in \eqref{eq: rho_0(x)}. There exists $\varepsilon_0>0$ such that, for any $|\varepsilon| < \varepsilon_0$, there is a unique solution $\rho_{\varepsilon,v}(x-s_{\varepsilon})$ to \eqref{eq: rho} such that 
\begin{equation}
    ||\rho_{\varepsilon,v} - \rho_{0,v}||_{L^{\infty}(\mathbf{R})} + |s_{\varepsilon}-s_0| \lesssim \varepsilon.
    \label{eq: order_epsilon}
\end{equation}
Moreover, for any $x \in \mathbf{R}$ fixed, the map $\varepsilon \to \rho_{\varepsilon,v}(x-s_{\varepsilon})$ is smooth in $(-\varepsilon_0,\varepsilon_0)$.
\label{prop: persistence_rho_0}
\end{proposition}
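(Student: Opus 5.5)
We construct $\rho_{\varepsilon,v}(x-s_\varepsilon)$ by a Lyapunov--Schmidt reduction around the translated profile $\rho_{0,v}(\cdot-s)$, treating $s$ as the free parameter.

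\textbf{Setup and linearization.} We seek solutions of \eqref{eq: rho} of the form $\rho(x)=\rho_{0,v}(x-s)+w(x-s)$, with $w$ in an exponentially weighted (or plain) space $H^2(\mathbf{R})$ of real functions. Substituting into \eqref{eq: rho} gives
\begin{equation*}
F(w,s,\varepsilon):=L_v w + N(w) + \varepsilon V(\cdot+s)(\rho_{0,v}+w)=0,
\end{equation*}
where $N$ collects the quadratic and higher terms and
\begin{equation*}
L_v = -\frac{1}{2}\partial_x^2 - \frac{v^2}{2}\Bigl(1+\frac{3}{\rho_{0,v}^4}\Bigr) - 1 + 3\rho_{0,v}^2 .
\end{equation*}
Since $\rho_{0,v}$ is bounded below by $|v|>0$, the map $F:H^2\times\mathbf{R}\times\mathbf{R}\to L^2$ is smooth. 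Hence the hypothesis $v\neq0$ is essential at this stage.

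At infinity, $L_v\to -\frac12\partial_x^2+2(1-v^2)$. Therefore $L_v$ is self-adjoint with essential spectrum $[2(1-v^2),\infty)$, and it is Fredholm of index zero on $H^2\to L^2$. Differentiating the $\varepsilon=0$ equation in $x$ gives $L_v\rho_{0,v}'=0$. Because the equation is a second order ODE and the second solution grows exponentially, $\ker L_v=\mathrm{span}\{\rho_{0,v}'\}$. Since $\rho_{0,v}$ is even, $\rho_{0,v}'$ is odd.

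\textbf{Reduction.} Let $P$ denote the $L^2$-orthogonal projection onto $(\rho_{0,v}')^\perp$. We impose $w\perp\rho_{0,v}'$, which fixes the translation freedom. The range equation $PF(w,s,\varepsilon)=0$ satisfies $PF(0,s,0)=0$ and $D_wPF(0,s,0)=L_v$, which is invertible on $(\rho_{0,v}')^\perp$. The implicit function theorem then gives a smooth map $w=w(s,\varepsilon)$, defined for $s$ near $s_0$ and $|\varepsilon|$ small, with $\|w\|_{H^2}\lesssim|\varepsilon|$. More precisely,
\begin{equation*}
w=-\varepsilon L_v^{-1}P\bigl[V(\cdot+s)\rho_{0,v}\bigr]+O(\varepsilon^2).
\end{equation*}

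\textbf{Bifurcation equation.} It remains to solve
\begin{equation*}
g(s,\varepsilon):=\langle F(w(s,\varepsilon),s,\varepsilon),\rho_{0,v}'\rangle=0 .
\end{equation*}
Using self-adjointness and $L_v\rho_{0,v}'=0$, the leading term is
\begin{equation*}
g(s,\varepsilon)=\varepsilon\int V(x+s)\rho_{0,v}\rho_{0,v}'\,dx+O(\varepsilon^2)=\frac{\varepsilon}{2}\int V(y)\,\partial_y\bigl(\rho_{0,v}^2(y-s)\bigr)\,dy+O(\varepsilon^2).
\end{equation*}
Integrating by parts, this becomes $\frac{\varepsilon}{2}M'(s)+O(\varepsilon^2)$. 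Set $\tilde g=g/\varepsilon$, which is smooth because $g$ vanishes identically at $\varepsilon=0$. Then $\tilde g(s_0,0)=\frac12M'(s_0)=0$ and $\partial_s\tilde g(s_0,0)=\frac12M''(s_0)\neq0$. The implicit function theorem yields a unique smooth branch $s_\varepsilon$ with $|s_\varepsilon-s_0|\lesssim|\varepsilon|$.

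\textbf{Conclusion.} The solution is $\rho_{\varepsilon,v}=\rho_{0,v}+w(s_\varepsilon,\varepsilon)$. The bound \eqref{eq: order_epsilon} follows from the Sobolev embedding $H^2\subset L^\infty$. Pointwise smoothness in $\varepsilon$ follows from the smoothness of $w$ and $s_\varepsilon$. Finally, $\rho_{\varepsilon,v}\to1$ at infinity because $w\in H^2$, and exponential decay follows from ODE asymptotics, since $V$ decays exponentially.

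Uniqueness holds within the neighborhood of the branch described by \eqref{eq: order_epsilon}. To see this, any nearby solution can be written uniquely as $\rho_{0,v}(\cdot-s)+w(\cdot-s)$ with $w\perp\rho_{0,v}'$. This decomposition is obtained by the standard modulation argument, an implicit function theorem in $s$ applied to the orthogonality condition, whose nondegeneracy is $\|\rho_{0,v}'\|^2\neq0$.

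The main technical point is the division of the bifurcation equation by $\varepsilon$. One must check that $g$ is smooth and divisible, which holds because $F(0,s,0)\equiv0$ for all $s$. The other point to verify is that the kernel of $L_v$ is exactly one-dimensional.
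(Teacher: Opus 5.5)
Your proposal is correct and follows essentially the same Lyapunov--Schmidt argument as the paper. Both use the same linearized operator with kernel spanned by $\partial_x\rho_{0,v}$, the implicit function theorem on the range equation, and the bifurcation function $\frac12 M'(s)+O(\varepsilon)$ solved via $M''(s_0)\neq0$. The only differences are minor: working in the frame translated by $s$ makes $L_v$ and the projection $s$-independent, and justifying division by $\varepsilon$ through $g(s,0)\equiv0$ is cleaner. The small price is that pointwise smoothness of $\varepsilon\mapsto\rho_{\varepsilon,v}(x-s_\varepsilon)$ needs a one-line extra argument, e.g. smooth dependence of the ODE flow on initial point and data, whereas in the paper's untranslated parametrization it is immediate.
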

\begin{proof}
Let $v \in (-1,1) \backslash\{0\}$. For $s \in \mathbf{R}$, we use the decomposition $\rho_{\varepsilon}(x-s) = \rho_0(x-s) + \varphi(x,s,\varepsilon)$ (for simplicity, in the following we drop the subscript $v$). If inserted in \eqref{eq: rho}, we obtain
\begin{equation}
    \begin{split}
        F(\varphi, \varepsilon,s) := &-\frac{1}{2}\partial_x^2\varphi + \frac{v^2}{2}\Bigl(-\varphi + \frac{1}{(\rho_0(x-s)+\varphi)^3}-\frac{1}{\rho_0^3(x-s)}\Bigr) - \bigl(1-\rho_0^2(x-s)\bigr)\varphi \\ & + 2\rho_0^2(x-s)\varphi +3 \rho_0(x-s)\varphi^2 + \varphi^3 + \varepsilon V(x)[\rho_0(x-s)+\varphi] =0.
    \end{split}
\end{equation}
Here, $F$ is defined as $F: U\times\mathbf{R} \times \mathbf{R} \to L^2(\mathbf{R})$, where $U :=\{u \in H^2(\mathbf{R}), \ ||u||_{H^2(\mathbf{R})}< \delta_0\} \subset H^2(\mathbf{R})$, for $\delta_0>0$ small enough. The map $F$ is $C^p$ in $U$ for any $p \geq 1$. In particular, its Frechét derivative in $\varphi=0$ and $\varepsilon=0$ is given by the operator $L: H^2(\mathbf{R}) \to L^2(\mathbf{R})$ defined as
\begin{equation}
    L := -\frac{1}{2}\partial_x^2-\frac{v^2}{2}\Bigl(1+\frac{3}{\rho_0^4(x-s)}\Bigr) - (1-\rho_0^2(x-s)) + 2\rho_0^2(x-s)
    \label{eq: operator_L}
\end{equation}
The equation $F(\varphi, \varepsilon,s)=0$ can thus be rewritten as
\begin{equation}
    F(\varphi,\varepsilon,s) = L\varphi + N(\varphi,s) + \varepsilon V(x) [\rho_0(x-s)+\varphi] =0,
\end{equation}
where $N(\varphi,s): U \times \mathbf{R} \to L^2(\mathbf{R})$ is defined as
\begin{equation}
    N(\varphi,s) := \frac{v^2}{2}\Bigl(\frac{6\rho_0^2(x-s)\varphi^2+8\rho_0(x-s)\varphi^3+3\varphi^4}{\rho_0^4(x-s)(\rho_0(x-s)+\varphi)^3}\Bigr)+\varphi^2(\rho_0(x-s)+\varphi) + 2\rho_0(x-s)\varphi^2,
\end{equation}
and we have $||N(\varphi,s)||_{L^2(\mathbf{R})} = o(||\varphi||_{H^2(\mathbf{R})})$ as $||\varphi||_{H^2(\mathbf{R})} \to 0$. In particular, we have $||N(\varphi,s)||_{L^2(\mathbf{R})} = O(||\varphi||^2_{H^2(\mathbf{R})})$ as $||\varphi||_{H^2(\mathbf{R})} \to 0$.\\
We can rewrite the operator in \eqref{eq: operator_L} as $L =-\frac{1}{2}\partial_x^2 + W(x)$, where $W(x) \to 2-2v^2$ exponentially fast as $|x| \to \infty$. By Kato-Rellich theorem, $L$ is a self-adjoint operator (see \cite[Proposition 4.8]{teta2018primer}), and by Weyl's lemma, and the exponential decay of $W(x)$, we have $\sigma_{ess}(L) = [2-2v^2,+\infty)$ (see \cite{pelinovsky_book} Appendix B.15 - Lemma B.5). We conclude that $L$ is a Fredholm operator of index zero (see \cite{edmunds_evans} Chapter IX, Theorem 1.6, and \cite{rabier_stuart}). Moreover, we have $L(\partial_x\rho_0(x-s)) =0$. Since zero must be a simple eigenvalue of $L$ (see \cite{pelinovsky_book} Corollary 4.1 of Chapter 4), we have $Ker(L) = Span\{\partial_x\rho_0(x-s)\}$. Using the Fredholm property, we can write $L^2(\mathbf{R}) = Ker(L) \bigoplus Ran(L)$, and we can apply the method of Lyapunov-Schmidt to the problem $F=0$, as in \cite[Chapter 2.7.6]{nirenberg}. We define $Q$ to be the projector from $L^2(\mathbf{R})$ onto $Ran(L)$. Then, $F(\varphi,\varepsilon,s) =0$ is equivalent to the equations $QF(\varphi, \varepsilon,s) =0$ and $(1-Q)F(\varphi,\varepsilon,s) =0$. By the implicit function theorem (\cite[Theorem 2.7.5]{nirenberg}) applied to $QF: U \subset H^2(\mathbf{R})\times \mathbf{R}\times \mathbf{R} \to Ran(L)$, there exists a unique smooth map $(\varepsilon,s) \to \varphi(\varepsilon,s) \in Ran(L)\cap H^2(\mathbf{R})$ such that $QF(\varphi(\varepsilon,s),\varepsilon,s) =0$ for $\varepsilon$ small enough. By Taylor's theorem \cite[Theorem 5.6.2]{cartan}, we have $||\varphi(\varepsilon,s)||_{H^2(\mathbf{R})} = O(\varepsilon)$ as $\varepsilon \to 0$. We now consider the equation $(1-Q)F(\varphi(\varepsilon,s), \varepsilon,s) =0$, usually referred to as bifurcation equation. It is equivalent to
\begin{equation}
    \Bigl(\partial_x\rho_0(x-s), N(\varphi(\varepsilon,s), s)\Bigr) + \varepsilon \Bigl(\partial_x \rho_0(x-s), V(x) (\rho_0(x-s) + \varphi(\varepsilon,s)) \Bigr) =0,
\end{equation}
where the parentheses correspond to the scalar product in $L^2(\mathbf{R})$. If divided by $\varepsilon$, the expression above can be written as
\begin{equation}
    G(\varepsilon,s) := \frac{1}{2}M'(s) + \tilde{G}(\varepsilon,s) =0,
\end{equation}
where $M'(s) = 2 \Bigl(\partial_x\rho_0(x-s), V(x)\rho_0(x-s)\Bigr)$ and $$\tilde{G}(\varepsilon,s) := \frac{1}{\varepsilon}\Bigl(\partial_x\rho_0(x-s), N(\varphi(\varepsilon,s),s)\Bigr) + \Bigl(\partial_x \rho_0(x-s), V(x) \varphi(\varepsilon,s) \Bigr).$$ Using the fact that $||N(\varphi(\varepsilon,s),s)||_{L^2(\mathbf{R})} = O(\varepsilon^2)$ as $\varepsilon \to 0$, we have that $G(\varepsilon,s)$ is smooth in $\varepsilon$ and $s$ for $\varepsilon$ small enough. Now, our goal is to apply the implicit function theorem to $G(\varepsilon,s) =0$. Consider $s_0 \in \mathbf{R}$ simple root of \eqref{eq: M'}. Then we have $G(0,s_0)=\frac{1}{2}M'(s_0)=0$, and $\partial_sG(0,s_0) = \frac{1}{2}M''(s_0) \neq 0$. Thus, there exists a smooth curve $s(\varepsilon) = s_0 + O(\varepsilon)$ such that $G(\varepsilon,s(\varepsilon)) =0$ for all $\varepsilon$ small enough. We conclude that the function $\varphi(\varepsilon,s(\varepsilon))$ satisfies $F(\varphi(\varepsilon,s(\varepsilon)), \varepsilon, s(\varepsilon)) =0$ for all $\varepsilon$ small enough. Thus, there exists a unique continuation of $\rho_0(x-s_0)$ to a solution $\rho_{\varepsilon}(x-s(\varepsilon)) = \rho_0(x-s(\varepsilon)) + \varphi(\varepsilon,s(\varepsilon))$ of \eqref{eq: rho} for $\varepsilon$ small enough. Moreover, $\rho_{\varepsilon}(x)$ and $\rho_0(x)$ are $\varepsilon$-close in the $L^{\infty}$-norm. 
\end{proof}
We can now prove Proposition \ref{prop: persistence_intro}, about the existence of time-independent solutions $\phi_{\varepsilon,v}$ to \eqref{eq: GP_V_main}.
\begin{proof}[Proof of Proposition \ref{prop: persistence_intro}]
    By Proposition \ref{prop: persistence_rho_0}, there exists a unique continuation of $\rho_{0,v}(x-s_0)$ in \eqref{eq: rho_0(x)} to a solution $\rho_{\varepsilon,v}(x-s_{\varepsilon})$ to \eqref{eq: rho}. We define $$\theta_{\varepsilon,v}(x-s_{\varepsilon}) := \frac{\pi}{2}+\int_{0}^{x}v\Bigl(1-\frac{1}{\rho^2_{\varepsilon}(t-s_{\varepsilon})}\Bigr)dt.$$ Then, the function $\phi_{\varepsilon,v}(x-s_{\varepsilon}):=\rho_{\varepsilon}(x-s_{\varepsilon})e^{i\theta_{\varepsilon}(x-s_{\varepsilon})}$ solves \eqref{eq: GP_V_main_stat}. Moreover, $|\phi_{\varepsilon,v}(x)|$ and $|\phi_{0,v}(x)|$ are $\varepsilon$-close in the $L^{\infty}$-norm by \eqref{eq: order_epsilon}, and we have
    \begin{equation}
        \lim_{\varepsilon \to 0} \bigl( ||\phi_{\varepsilon,v} - \phi_{0,v}||_{L^{\infty}(\mathbf{R})} + |s_{\varepsilon}-s_0| \bigr) =0.
    \end{equation}
    As $x \to \pm \infty$, the function $\theta_{\varepsilon}(x)$ converges to the limits
    \begin{equation*}
        \theta_{\varepsilon,\pm} := \frac{\pi}{2} + \int_{0}^{\pm \infty}v\Bigl(1-\frac{1}{\rho^2_{\varepsilon}(t-s_{\varepsilon})}\Bigr)dt.
    \end{equation*}
    Since, for any $x \in \mathbf{R}$ fixed, the function $\rho_{\varepsilon}(x-s_{\varepsilon})$ is smooth for $\varepsilon$ small enough, we can conclude the same for the map $\theta_{\varepsilon}(x-s_{\varepsilon})$. Similarly, the limits $\theta_{\varepsilon,\pm}$ are also smooth for $\varepsilon$ small enough. For any given $x \in \mathbf{R}$, the same regularity holds for the map $\varepsilon \to \phi_{\varepsilon,v}(x-s_{\varepsilon})$.
\end{proof}
We conclude this section with the following lemma, based on \cite[Lemma 2.9]{pelinovsky}, which reports a general property of gray modes.
\begin{lemma}
    Let $u_{\varepsilon,v}$ be a gray mode of Definition \ref{def: traveling_mode} for $v \in (-1,1) \backslash\{0\}$. Then for any $\varepsilon \neq 0$, it holds
    \begin{equation}
        \int_{\mathbf{R}}V'(x) [1-|u_{\varepsilon,v}(x)|^2]dx =0.
        \label{eq: no_drag}
    \end{equation}
    \label{lemma: no_drag}
\end{lemma}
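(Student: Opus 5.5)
We will prove \eqref{eq: no_drag} through a momentum balance: multiply the stationary equation \eqref{eq: GP_V_main_stat} by $\partial_x\overline{u}$, take twice the real part, and integrate over $\mathbf{R}$. Throughout, $u=u_{\varepsilon,v}$ denotes the gray mode. By Definition \ref{def: traveling_mode}, $u$ is smooth. Moreover $|u|\to 1$ and $\partial_x u\to 0$ exponentially fast at $\pm\infty$; the latter holds because $\rho'\to 0$, and \eqref{eq: theta} gives $\theta'=v(1-\rho^{-2})\to 0$. The same integrability holds for $\partial_x^2 u$.

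We take the equation term by term. For the dispersive term, $2\Re\bigl(-\tfrac12\partial_x^2 u\,\partial_x\overline{u}\bigr)=-\tfrac12\partial_x|\partial_x u|^2$, which integrates to zero since $\partial_x u\to0$. For the drift term, $2\Re\bigl(iv\,\partial_x u\,\partial_x\overline u\bigr)=2\Re\bigl(iv|\partial_x u|^2\bigr)=0$ pointwise. This is the point where the drift term, which is the new feature with respect to \cite{pelinovsky}, drops out harmlessly. For the nonlinear term, $2\Re\bigl(-(1-|u|^2)u\,\partial_x\overline u\bigr)=-(1-|u|^2)\partial_x|u|^2=\tfrac12\partial_x\bigl[(1-|u|^2)^2\bigr]$, which integrates to zero because $|u|^2\to1$. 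The potential term gives $2\Re\bigl(\varepsilon V u\,\partial_x\overline u\bigr)=\varepsilon V\partial_x|u|^2=-\varepsilon V\,\partial_x(1-|u|^2)$. Integrating by parts, the boundary terms vanish since $V$ decays and $1-|u|^2$ is bounded, and we get $\varepsilon\int_{\mathbf{R}}V'(x)(1-|u|^2)\,dx$.

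Summing the four contributions yields $\varepsilon\int_{\mathbf{R}}V'(1-|u|^2)\,dx=0$. Dividing by $\varepsilon\neq0$ gives \eqref{eq: no_drag}.

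An alternative is to work with the hydrodynamic system: multiply \eqref{eq: rho} by $\rho'$ and obtain an exact derivative plus $\varepsilon V\rho\rho'$. The obstacle is only bookkeeping: every term except the potential one must be an exact derivative of a quantity with equal limits at $\pm\infty$, and the required decay must be justified. The decay of $\partial_x u$ and of $1-|u|^2$ follows from the exponential convergence in Definition \ref{def: traveling_mode}, together with \eqref{eq: theta} and standard ODE regularity for \eqref{eq: rho}. The nonzero phase limits $\theta_\pm$ cause no difficulty, because only $|u|$ and $\partial_x u$ enter the boundary terms.
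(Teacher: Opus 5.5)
Your proof is correct and is essentially the paper's argument. The quantity you implicitly differentiate, $-\tfrac12|\partial_x u|^2+\tfrac12(1-|u|^2)^2+\varepsilon V(|u|^2-1)$, equals $2E+v^2$ for the paper's first integral $E(\rho,\partial_x\rho,x)$ once \eqref{eq: theta} is used to write $|\partial_x u|^2=(\partial_x\rho)^2+v^2(\rho-\rho^{-1})^2$, so the two proofs differ only in working with $u$ rather than with the hydrodynamic pair $(\rho,\theta)$; your version has the small advantage of never needing the polar form beyond checking $\partial_x u\to0$, and it shows directly that the drift term drops out pointwise.
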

\begin{proof}
    Following the discussion at the beginning of this section, we know that if $u_{\varepsilon,v}(x) = \rho_{\varepsilon,v}(x) e^{i\theta_{\varepsilon,v}}$ is a gray mode, than $\rho_{\varepsilon,v}$ solves \eqref{eq: rho} and $\theta_{\varepsilon,v}$ solves \eqref{eq: theta}. Consider the function
    \begin{equation*}
        E(\rho_{\varepsilon,v}, \partial_x\rho_{\varepsilon,v},x):= -\frac{1}{4}\bigl(\partial_x\rho_{\varepsilon,v}\bigr)^2 - \frac{v^2}{4}\bigl(\rho_{\varepsilon,v}^2+\frac{1}{\rho_{\varepsilon,v}^2}\bigr) + \frac{1}{4}\bigl(1-\rho_{\varepsilon,v}^2\bigr)^2+\frac{\varepsilon}{2}V(x)\bigl(\rho_{\varepsilon,v}^2-1\bigr).
    \end{equation*}
    Using \eqref{eq: rho}, we have
    \begin{equation*}
        \frac{d}{dx}E(\rho_{\varepsilon,v},\partial_x\rho_{\varepsilon,v},x) = \frac{\varepsilon}{2}V'(x) \bigl(\rho_{\varepsilon,v}^2-1\bigr).
    \end{equation*}
    Integrating the expression above on $x \in \mathbf{R}$, and using the asymptotic conditions of $\phi_{\varepsilon,v}$ from Definition \ref{def: traveling_mode}, we obtain \eqref{eq: no_drag}.
\end{proof}
\section{Linear Stability analysis}
\label{section: spectral_problem}
In this section we linearize equation \eqref{eq: GP_V_main} around a gray mode $\phi_{\varepsilon,v}(x-s_{\varepsilon})$ of Proposition \ref{prop: persistence_intro}, and we introduce the associated  matrix differential operator $\mathcal{L}_{\varepsilon}$. As we mentioned in the introduction, the eigenvalues of $\mathcal{L}_{\varepsilon}$ determine the stability of $\phi_{\varepsilon,v}(x-s_{\varepsilon})$. The goal of the next sections will be to show the existence of an eigenvalue $\lambda \in \mathbf{C}$ with positive real part, at least for $\varepsilon$ small enough.\\
From now on, the velocity $v$ will be a fixed number in $(-1,1)\backslash\{0\}$. For this reason, we will omit the subscript $v$ in denoting the gray modes. 
\subsection{Linearization of the equations}
We linearize \eqref{eq: GP_V_main} around $\phi_{\varepsilon}(x-s_{\varepsilon})$. We write $$u(x,t) = \phi_{\varepsilon}(x-s_{\varepsilon}) + [u(x)+iw(x)]e^{\lambda t} + [\overline{u}(x) + i \overline{w}(x)] e^{\overline{\lambda}t},$$
for $\lambda \in \mathbf{C}$ and $u(x), w(x) \in H^2(\mathbf{R})$. We insert the decomposition above in \eqref{eq: GP_V_main} and we retain only the linear terms in $u(x)$ and $w(x)$. For convenience, in the following we write $\phi_{\varepsilon}$ instead of $\phi_{\varepsilon}(x-s_{\varepsilon})$, whenever this notation does not lead to ambiguity. We obtain the following generalized eigenvalue problem
\begin{equation}
    \begin{pmatrix}
L_1 & D_1 \\
D_2 & L_2 
\end{pmatrix} \begin{pmatrix}
u \\
w 
\end{pmatrix} = \lambda J \begin{pmatrix}
    u\\
    w
\end{pmatrix} 
\label{spectral_problem}
\end{equation}
Where 
\begin{equation}
    \begin{split}
        D_1 & = +v\partial_x - 2 \Re\phi_\varepsilon \Im\phi_\varepsilon;   \\ 
        D_2  & = -v\partial_x - 2 \Re\phi_\varepsilon \Im\phi_\varepsilon; \\
        L_1 &= \frac{1}{2}\partial_{xx} +(1-|\phi_{\varepsilon}|^2) - 2(\Re\phi_{\varepsilon})^2 - \varepsilon V(x);\\
        L_2 &= \frac{1}{2}\partial_{xx} +(1-|\phi_{\varepsilon}|^2) - 2(\Im\phi_{\varepsilon})^2 - \varepsilon V(x).
\end{split} 
\label{spectral_operators}
\end{equation}
and 
\begin{equation}
J=
    \begin{pmatrix}
        0 & -1 \\
        1 & 0
    \end{pmatrix}.
\end{equation}
We can rewrite the eigenvalue problem (\ref{spectral_problem}) as
\begin{equation}
    HZ(x)=\lambda J Z(x).
    \label{def_H}
\end{equation}
for $Z(x) = [u(x), w(x)]^T$. Of great importance in the following will be the operator $\mathcal{L}_{\varepsilon}:=J^{-1}H$, which admits the following useful decomposition
\begin{equation}
    \mathcal{L}_{\varepsilon}Z = B\partial_x^2 Z +P \partial_xZ+N(x)Z
    \label{matrix_operator_L}
\end{equation}
where
\begin{equation}
    B= \begin{pmatrix}
        0 & 1/2 \\
        -1/2 & 0 
    \end{pmatrix} \ \ \ \ \ P = \begin{pmatrix}
        -v & 0 \\
        0 & -v
    \end{pmatrix}
    \label{eq: B and P}
\end{equation} while
\begin{equation}
    N(x) = \begin{pmatrix}
        -2\Re\phi_{\varepsilon}\Im\phi_{\varepsilon} & (1-|\phi_{\varepsilon}|^2)-2(\Im\phi_{\varepsilon})^2 - \varepsilon V(x) \\
        -(1-|\phi_{\varepsilon}|^2)+2(\Re\phi_{\varepsilon})^2 +\varepsilon V(x) & +2\Re\phi_{\varepsilon}\Im\phi_{\varepsilon}
    \end{pmatrix}.
    \label{eq: N(x)}
\end{equation}
We now give the following notion of spectral stability.
\begin{definition}
    We say that the solution $\phi_{\varepsilon}$ to \eqref{eq: GP_V_main} is spectrally unstable if there exists an eigenvector $[u,w]^T \in H^2(\mathbf{R}) \times H^2(\mathbf{R})$ of the eigenvalue problem \eqref{def_H} associated to an eigenvalue $\lambda$ with $\Re \lambda >0$. Otherwise, $\phi_{\varepsilon}$ is said to be spectrally stable.
\end{definition}
The next section is dedicated to the study of the eigenvalue problem \eqref{spectral_problem} for $\varepsilon=0$. We study some properties of the operator $\mathcal{L}_0$, and we define the Evans function associated to this operator. This requires writing the eigenvalue problem \eqref{spectral_problem} as a first order ODE system.
\section{The unperturbed problem}
\label{section: unperturbed_problem}
\subsubsection{Kernel and generalized kernel}
We begin by studying the kernel and generalized kernel of the operator $\mathcal{L}_{0}$.  
\begin{proposition}
We have the following properties:
\begin{enumerate}
    \item[(a)] $\mathcal{L}_0[-\Im\phi_0, \ \Re\phi_0]^T =0$;\\
    \item[(b)] $\mathcal{L}_0[\partial_x\Re\phi_0,\ 0]^T =0$;\\
    \item[(c)] $\mathcal{L}_0[\partial_v \Re\phi_0, \  \partial_v\Im\phi_0]^T = [\partial_x\Re\phi_0, \ 0]^T$;\\
    \item[(d)] $\mathcal{L}_0[-x\Im\phi_0, \ x\Re\phi_0]^T = [\partial_x\Re\phi_0,\ 0]^T-v[-\Im\phi_0, \Re\phi_0]^T$;\\
    \item[(e)] $\mathcal{L}_0[0, \ 1]^T = [\partial_x\Re\phi_0, \ 0]^T + 2v [-\Im \phi_0, \ \Re\phi_0]^T$;\\
    \item[(f)] $\mathcal{L}_0[-3\partial_v\Re\phi_0 -2x\Im\phi_0, \ -3\partial_v\Im\phi_0 + 2x\Re\phi_0 +1] =0$.
\end{enumerate}
\label{proposition_L}
\end{proposition}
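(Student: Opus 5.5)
The plan is to avoid working entry by entry with the matrix operator \eqref{matrix_operator_L} and to work instead with the complex linearization. For $\varepsilon=0$, define
\begin{equation*}
\mathrm{Lin}(\psi):=\tfrac12\psi''-iv\psi'+(1-|\phi_0|^2)\psi-2\Re(\overline{\phi}_0\psi)\,\phi_0 .
\end{equation*}
Inserting $\psi=(u+iw)e^{\lambda t}$ into \eqref{eq: GP_V_main} gives $i\partial_t\psi=\mathrm{Lin}(\psi)$. Since $i\lambda(u+iw)$ has real form $\lambda J[u,w]^T$, the operator $H$ in \eqref{def_H} is exactly the real form of $\mathrm{Lin}$. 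Hence $\mathcal{L}_0 Z=J^{-1}\,[\Re\,\mathrm{Lin}(\psi),\Im\,\mathrm{Lin}(\psi)]^T$ with $\psi=u+iw$, where $J^{-1}=\begin{pmatrix}0&1\\-1&0\end{pmatrix}$. In particular $J^{-1}[0,a]^T=[a,0]^T$ and $J^{-1}[\Re\phi_0,\Im\phi_0]^T=-[-\Im\phi_0,\Re\phi_0]^T$. First I will check, as a routine verification, that this matches the entries of $N(x)$ in \eqref{eq: N(x)}. After that, each item reduces to computing $\mathrm{Lin}$ on a suitable complex function, using the profile equation
\begin{equation*}
\tfrac12\phi_0''-iv\phi_0'+(1-|\phi_0|^2)\phi_0=0 .
\end{equation*}

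The items come from symmetries, used in order. For (a), the phase invariance $\phi_0\mapsto e^{i\alpha}\phi_0$ gives $\mathrm{Lin}(i\phi_0)=0$. For (b), translation invariance gives $\mathrm{Lin}(\phi_0')=0$; since $\Im\phi_0\equiv v$, we have $\phi_0'=\partial_x\Re\phi_0$, which is real. For (c), I differentiate the profile equation with respect to $v$ to get $\mathrm{Lin}(\partial_v\phi_0)-i\phi_0'=0$. Applying $J^{-1}$ to the real form $[0,\phi_0']^T$ yields $[\partial_x\Re\phi_0,0]^T$.

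For (d), I compute $\mathrm{Lin}(ix\phi_0)$ directly. The zeroth-order terms reproduce $ix$ times the profile equation, because $\Re(\overline{\phi}_0\, ix\phi_0)=0$. The derivative terms leave the commutator contributions $i\phi_0'+v\phi_0$. Applying $J^{-1}$ gives $[\partial_x\Re\phi_0,0]^T-v[-\Im\phi_0,\Re\phi_0]^T$. For (e), $\psi=i$ gives $\mathrm{Lin}(i)=i(1-|\phi_0|^2)-2\Re(i\overline{\phi}_0)\phi_0=i(1-|\phi_0|^2)-2v\phi_0$. The key explicit identity here is $1-|\phi_0|^2=(1-v^2)\operatorname{sech}^2(\sqrt{1-v^2}x)=\phi_0'$, which turns this into $i\phi_0'-2v\phi_0$ and hence into the stated right-hand side. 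Finally, (f) follows by linearity from the combination $-3(\mathrm{c})+2(\mathrm{d})+(\mathrm{e})$: the $[\partial_x\Re\phi_0,0]^T$ terms have coefficients $-3+2+1=0$, and the $[-\Im\phi_0,\Re\phi_0]^T$ terms $-2v+2v$ cancel.

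Most of this is bookkeeping. The step I expect to be the real obstacle is getting the sign conventions right: the real form of $\mathrm{Lin}$ versus $H$, and the action of $J^{-1}$. I would pin these down first by checking (a) and (b) against the explicit matrix $N(x)$ before relying on them for (c)–(e).
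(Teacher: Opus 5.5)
Your proposal is correct and follows the paper's route. (a) and (b) come from gauge and translation invariance, (c) from differentiating the profile equation in $v$, and (d) and (e) by direct computation. Your complex form $\mathrm{Lin}$ with $\mathcal{L}_0=J^{-1}H$ is just a tidy way to organize that computation. In (e) you correctly invoke $1-|\phi_0|^2=\phi_0'$, which is needed alongside $\Im\phi_0=v$. Finally, (f) is the combination $-3(\mathrm{c})+2(\mathrm{d})+(\mathrm{e})$.
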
 
\begin{proof}
    Properties (a) and (b) are consequences of gauge and translational invariance, respectively. Properties (c), (d) and (e) follow by direct computation (in particular (c) follows by differentiation of \eqref{eq: GP_V_main_stat} at $\varepsilon=0$ w.r.t. the parameter $v$). Finally, (f) follows by combining the contributions from (c), (d) and (e). Notice that in (e) we use the fact that $\Im\phi_0(x) = v$.
\end{proof}
Next, we formulate the eigenvalue problem in \eqref{spectral_problem} at $\varepsilon=0$ as a first order ODE system, and we study some of its asymptotic properties as $|x| \to \infty$. We then define the Evans function associated to $\mathcal{L}_0$.
\subsubsection{The first order system at $\varepsilon=0$}
\label{section: eigenvalues}
If we write \eqref{spectral_problem} as a first order system, we obtain (hereafter $'=d/dx$)
\begin{equation}
    Y'(\lambda,x) = M(\lambda,x)Y(\lambda,x)
    \label{first_order_system}
\end{equation}
where $Y(\lambda,x)= [u,w,u',w']^T$ and
\begin{equation} M(\lambda,x) = 
    \begin{pmatrix}
    0 & 0 & 1 & 0 \\
    0 & 0 & 0 & 1 \\
    -2(1-|\phi_0|^2)+4(\Re\phi_0)^2 & -2\lambda+4\Re\phi_0\Im\phi_0 & 0 & -2v \\
    +2\lambda+4\Re\phi\Im\phi_0 & -2(1-|\phi_0|^2)+4(\Im\phi_0)^2 & +2v & 0 
\end{pmatrix}
\label{matrix_M}
\end{equation}
An important role is played by the asymptotic matrices
\begin{equation} M_{\pm}(\lambda) := 
    \begin{pmatrix}
    0 & 0 & 1 & 0 \\
    0 & 0 & 0 & 1 \\
    4(1-v^2) & -2\lambda\pm4v\sqrt{1-v^2} & 0 & -2v \\
    +2\lambda\pm4v\sqrt{1-v^2} & 4v^2 & +2v & 0 
\end{pmatrix},
\label{matrices_pm}
\end{equation}
which are defined as $M_{\pm}(\lambda) := \lim_{x \to \pm \infty}M(\lambda,x)$, and where the limit values are reached exponentially fast.\\ Our first goal is to gain some information about the eigenvalues $\gamma_1(\lambda), ..., \gamma_4(\lambda)$, for $i = 1,...,4$ of the matrices $M_{\pm}(\lambda)$. In particular, we are interested in their location in the complex plane and in their regularity as functions of $\lambda$.\\
We begin by computing the characteristic polynomial
\begin{equation}
    \det(M_{\pm}(\lambda)-\gamma\  \text{Id}) = \gamma^4-4(1-v^2)\gamma^2+8v\lambda\gamma+4\lambda^2.
     \label{ch_pol} 
\end{equation}
Notice that the matrices $M_+(\lambda)$ and $M_-(\lambda)$ share the same characteristic polynomial. If we take $\lambda =0$, the eigenvalues of $M_{\pm}(0)$ are the solutions of $\gamma^2-4(1-v^2)\gamma^2 =0$, namely
\begin{equation}
    \gamma =0 \quad \text{(double root)} \quad \text{and} \quad \gamma = \pm \sqrt{4(1-v^2)}.
    \label{eq: roots_at_zero}
\end{equation} In particular, the matrices $M_{\pm}(0)$ admit a double eigenvalue $\gamma=0$. On the other hand, as we will see in the next lemma, the matrices $M_{\pm}(\lambda)$ admit four distinct eigenvalues for $\lambda$ close but different from zero. In the terminology of Kato \cite[Chapter 2, Section 1]{kato}, $\lambda=0$ is an \textit{exceptional point}. It corresponds to a value of the parameter $\lambda$ for which the number of distinct eigenvalues of the matrices $M_{\pm}(\lambda)$ has changed (from four to three, in the present case). Since the matrices $M_{\pm}(\lambda)$ are linear in $\lambda$, the number of exceptional points is finite in any compact subset of the complex plane $\mathbf{C}$ \cite[Chapter 2, Section 1]{kato}. In particular, the exceptional point $\lambda=0$ is isolated. \\
Knowing the location of the exceptional points is useful in order to study the regularity of the eigenvalues of $M_{\pm}(\lambda)$ as $\lambda$ varies in the complex plane. In particular, if $\lambda$ is restricted to a simply-connected subdomain $D_s \subset\mathbf{C}$ containing no exceptional point, then the eigenvalues of $M_{\pm}(\lambda)$ can be written as
\begin{equation*}
    \gamma_1(\lambda), \gamma_2(\lambda), \gamma_3(\lambda), \gamma_4(\lambda),
\end{equation*}
all four functions being holomorphic in $D_s$, with $\gamma_i(\lambda) \neq \gamma_j(\lambda)$ for $i \neq j$ \cite[Chapter 2, Section 1]{kato}. However, some of these functions may fail to be holomorphic at the exceptional point $\lambda =0$, and they may constitute a branch of an analytic function with branch point at $\lambda=0$, as we discuss in Section \ref{section: eigenvalues_0}. In the next lemma we begin to study the properties of the eigenvalues of $M_{\pm}(\lambda)$ away from the exceptional point $\lambda =0$. 
\begin{lemma}
    There exists $r >0$ small enough such that, for any $\lambda$ in the half-disk 
    \begin{equation}
        D_r = \{\lambda \in \mathbf{C}, \ |\lambda| < r \quad \text{and} \quad \Re \lambda >0\},
    \end{equation}
    there are four distinct roots $\gamma_i(\lambda)$, for $i=1,...,4$, of \eqref{ch_pol}, with two roots having positive real part, and two roots having negative real part. If $\lambda \in D_r$ is real, then the four roots are real. Finally, the maps $\lambda \to \gamma_i(\lambda)$ can be chosen to be holomorphic in $D_r$.
    \label{lemma: distinct_roots}
\end{lemma}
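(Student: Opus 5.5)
The plan is to study the quartic $p(\gamma,\lambda)=\gamma^4-4(1-v^2)\gamma^2+8v\lambda\gamma+4\lambda^2$ from \eqref{ch_pol} by perturbation from $\lambda=0$, combined with a topological counting argument based on the absence of purely imaginary roots when $\Re\lambda>0$.

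\emph{Step 1: no roots on the imaginary axis.} Suppose $\gamma=ik$ with $k\in\mathbf{R}$ is a root. Then $p(ik,\lambda)=0$ reads
\[
k^4+4(1-v^2)k^2+8iv\lambda k+4\lambda^2=0,
\]
that is,
\[
(2\lambda+2ivk)^2=-k^4-4(1-v^2)k^2-4v^2k^2=-k^4-4k^2 .
\]
Hence $\lambda=-ivk\pm\tfrac{i}{2}\sqrt{k^4+4k^2}$ is purely imaginary. Consequently, for $\Re\lambda>0$ no root of $p$ lies on $i\mathbf{R}$; this is just the statement that the essential spectrum is contained in $i\mathbf{R}$.

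\emph{Step 2: counting roots in each half-plane.} Since $D_r$ is connected and roots depend continuously on $\lambda$, and no root can cross $i\mathbf{R}$ by Step 1, the number of roots with positive real part is constant on $D_r$. We compute it at a convenient point, $\lambda=\eta>0$ small. At $\lambda=0$ the roots are $\pm2\sqrt{1-v^2}$ and the double root $0$. The simple roots persist analytically as $\gamma_{1,4}(\lambda)=\pm2\sqrt{1-v^2}+O(\lambda)$. For the two small roots, balance $-4(1-v^2)\gamma^2+8v\lambda\gamma+4\lambda^2\approx0$, i.e.\ set $\gamma=\mu\lambda$ and solve $(1-v^2)\mu^2-2v\mu-1=0$. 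This gives
\[
\mu_\pm=\frac{v\pm1}{1-v^2}=\frac{1}{1-v}\ \text{ or }\ -\frac{1}{1+v},
\]
which are real, distinct, and of opposite sign.

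Because the reduced equation has simple roots, the implicit function theorem applied to $q(\mu,\lambda):=\lambda^{-2}p(\mu\lambda,\lambda)=\lambda^2\mu^4-4(1-v^2)\mu^2+8v\mu+4$ yields analytic branches $\gamma_{2,3}(\lambda)=\lambda\bigl(\mu_\pm+O(\lambda^2)\bigr)$. These are in fact holomorphic on a full disk around $0$, not only on $D_r$. So for small $\lambda\ne0$ all four roots are distinct, and for real $\eta>0$ they are real: by realness of the coefficients of $q$ the branches are real for real $\lambda$. Exactly two roots, $2\sqrt{1-v^2}+O(\eta)$ and $\mu_+\eta$, are positive, and two are negative. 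Combined with Step 1, this gives the splitting two and two throughout $D_r$.

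\emph{Step 3: distinctness and holomorphy.} The expansions above already give four distinct holomorphic branches for $0<|\lambda|<r$, since $\mu_+\ne\mu_-$ and the large roots stay separated from the small ones. For real $\lambda\in(0,r)$ all roots are real, as shown in Step 2.

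The main obstacle is the degenerate double root at $\gamma=0$. I would resolve it through the rescaling $\gamma=\mu\lambda$, which desingularizes the problem. One must check that the reduced quadratic is nondegenerate, which holds since its discriminant is $4v^2+4(1-v^2)=4>0$, and that $r$ can be chosen uniformly so that the two clusters of roots stay separated.
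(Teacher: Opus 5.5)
Your proof is correct, but it reaches distinctness, realness and holomorphy by a different route from the paper. The two arguments share only the imaginary-axis exclusion in your Step 1: the paper solves the quartic for $\lambda$ as $\lambda=\frac{\gamma}{2}\bigl(-2v\pm\sqrt{4-\gamma^2}\bigr)$, while you complete the square, with the same conclusion. For the rest, the paper proceeds as follows:
\begin{itemize}
\item at real $\lambda=s>0$ it computes the discriminant of the quartic and uses Rees' classification to obtain four distinct real roots;
\item it fixes their signs from the product of the roots, equal to $4s^2>0$, together with continuity of the two large roots;
\item it invokes Kato's theory of exceptional points (isolatedness of $\lambda=0$) to get four distinct holomorphic branches on $D_r$.
\end{itemize}

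Your rescaling $\gamma=\mu\lambda$ combined with the implicit function theorem replaces both the discriminant computation and the appeal to Kato, and it gives strictly more. The small roots $\gamma_{2,3}(\lambda)=\lambda\bigl(\mu_\pm+O(\lambda^2)\bigr)$ are holomorphic on a full disk around $0$. The paper establishes this only later and separately, through the monodromy argument of Lemmas \ref{lemma: f(theta)} and \ref{lemma: regular_roots}. Your argument also yields directly the expansions $\gamma_2\sim\lambda/(1-v)$ and $\gamma_3\sim-\lambda/(1+v)$, which the paper needs afterwards to compute $\partial_\lambda v_2^-(0)$ and $\partial_\lambda v_3^+(0)$.

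The paper's route relies less on the specific scaling structure of the polynomial, whereas yours is shorter and self-contained. Since your four branches are already continuous on $D_r$ with nonvanishing real parts, you could also skip the half-plane root count in Step 2. Instead, argue branch by branch: each real part has constant sign on the connected set $D_r$, and you can read it off at $\lambda=\eta>0$.
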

\begin{proof}
    We consider again the characteristic polynomial in \eqref{ch_pol} with $\lambda = s$, for $s >0$ small enough. Then, the discriminant associated to this quartic function reads
    \begin{equation}
        D(s,v) = 16[4(1-v^2)]^4\cdot 4s^2 + 4 [4(1-v^2)]^3 \cdot (8vs)^2 + o(s^2).
    \end{equation}
    For all $v \in (-1,1)$ given, we can choose $s$ small enough so that $D(s,v) >0$, a condition which implies the existence of four distinct roots of \eqref{ch_pol} \cite{rees}. Moreover, since the coefficient $q: = -4(1-v^2)$ in front of the quadratic term in \eqref{ch_pol} is negative, and, for $s>0$ small enough, we have $4s^2 < q^2/4$, we have that all roots are real \cite{rees}. We conclude that, for $\lambda = s>0$ small enough, there are four distinct real roots of \eqref{ch_pol}. Regarding their sign, we proceed as follows: the product of the four roots equals $4s^2$, for any $s>0$. Moreover, for $\lambda =0$ the roots of \eqref{ch_pol} are given by \eqref{eq: roots_at_zero}. By choosing $s>0$ possibly smaller, we have that one root of \eqref{ch_pol} is strictly positive, being close to $\sqrt{4(1-v^2)}$, and another is strictly negative, being close to $-\sqrt{4(1-v^2)}$. This can be ensured by the continuity of the roots with respect to $s$. We conclude that the remaining two roots of \eqref{ch_pol} must be non-zero and must have opposite signs. We have thus shown that there exists $r>0$ such that, if $0<\lambda<r$, the matrices $M_{\pm}(\lambda)$ have four distinct eigenvalues, two with positive real part and two with negative real part. We now want to study the behavior of the eigenvalues as we move $\lambda$ in $D_r$, for a possibly smaller value of $r$.\\ The existence of four distinct roots for $\lambda \in (0,r)$ implies that $\lambda =0$ is an exceptional point for the matrices $M_{\pm}(\lambda)$. Since the matrices are linear in $\lambda$, this exceptional point is isolated. We consider $r>0$ possibly smaller, so that in the ball 
    \begin{equation}
        B_r: = \{\lambda \in \mathbf{C}, \quad |\lambda| < r\}, 
        \label{eq: ball_exceptional}
    \end{equation} there is only one exceptional point of $M_{\pm}(\lambda)$, which is $\lambda =0$. In particular, we have that $D_r$ does not contain any exceptional point. For this choice of $r>0$, the eigenvalues $\gamma_i(\lambda)$, for $i = 1,...,4$, can be expressed as holomorphic (and hence continuous) functions of $\lambda \in D_r$, with $\gamma_i(\lambda) \neq \gamma_j(\lambda)$, for $i \neq j$ (see \cite[Chapter 2, Section 1]{kato}). We now want to use the continuity property to show that no eigenvalue can cross the imaginary axis, as long as $\lambda$ varies in $D_r$. By contradiction, assume this is false. In particular, assume that one eigenvalue, say $\gamma_2(\lambda)$, has a real part that changes from positive to negative as we vary $\lambda$ in $D_r$. By continuity, there exists $\lambda_* \in D_r$ such that $\Re \gamma_2(\lambda_*) =0$. By solving \eqref{ch_pol} in $\lambda$, we have that one of the following relations holds
    \begin{equation}
       \lambda_* = \frac{\gamma_2(\lambda_*)}{2}(-2v+\sqrt{4-\gamma_2(\lambda_*)^2}) \qquad \text{or} \qquad  \lambda_* = \frac{\gamma_2(\lambda_*)}{2}(-2v-\sqrt{4-\gamma_2(\lambda_*)^2}).
    \label{lambda_sign_1} 
    \end{equation}
    Then, if $\Re \gamma_2(\lambda_*) = 0$, we have $\lambda_* \in i\mathbf{R}$, which is a contradiction. We conclude that, as long as $\lambda$ varies in $D_r$, the real part of each of the eigenvalues $\gamma_i(\lambda)$ remains non-zero and never changes sign.
\end{proof}
We conclude with the following observation.
\begin{lemma}
    The matrices $M_{\pm}(\lambda)$ have a purely imaginary eigenvalue only if $\lambda \in i\mathbf{R}$.
    \label{lemma: essential_spectrum}
\end{lemma}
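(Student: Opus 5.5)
Since $M_+(\lambda)$ and $M_-(\lambda)$ have the same characteristic polynomial \eqref{ch_pol}, it is enough to treat that one quartic. We substitute $\gamma = ik$ with $k\in\mathbf{R}$ and solve for $\lambda$. The plan is to show that every such $\lambda$ is purely imaginary. This is the same computation that led to \eqref{lambda_sign_1} in the proof of Lemma \ref{lemma: distinct_roots}, but now carried out globally rather than only near $\lambda_*$.

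Substituting $\gamma = ik$ into \eqref{ch_pol} gives
\begin{equation*}
k^4 + 4(1-v^2)k^2 + 8iv k\lambda + 4\lambda^2 = 0 .
\end{equation*}
We view this as a quadratic equation in $\lambda$ and complete the square:
\begin{equation*}
(2\lambda + 2ivk)^2 = -4v^2k^2 - k^4 - 4(1-v^2)k^2 = -k^2(k^2+4).
\end{equation*}
The right-hand side is real and nonpositive for every real $k$. Hence $2\lambda + 2ivk = \pm i|k|\sqrt{k^2+4}$, and therefore
\begin{equation*}
\lambda = \frac{i}{2}\Bigl(-2vk \pm |k|\sqrt{k^2+4}\Bigr) \in i\mathbf{R}.
\end{equation*}
This agrees with \eqref{lambda_sign_1} after setting $\gamma = ik$, since $\sqrt{4-\gamma^2} = \sqrt{4+k^2}$. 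It follows that if $M_\pm(\lambda)$ has an eigenvalue in $i\mathbf{R}$, then $\lambda\in i\mathbf{R}$, which is the claim.

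The only point that needs care is the algebra: the $v^2$ terms must cancel so that the right-hand side, $-k^2(k^2+4)$, has a definite sign and does not depend on $v$. Once that is checked, the statement follows directly. As a by-product, the curves $k\mapsto \lambda(k)$ cover the whole imaginary axis, which is consistent with the essential spectrum being $i\mathbf{R}$ as stated in the introduction.
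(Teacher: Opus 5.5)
Your proof is correct and follows the same route as the paper, which also solves the characteristic polynomial \eqref{ch_pol} as a quadratic in $\lambda$ to get $\lambda = \frac{\gamma}{2}\bigl(-2v \pm \sqrt{4-\gamma^2}\bigr)$ and concludes that $\lambda \in i\mathbf{R}$ when $\gamma \in i\mathbf{R}$. Your completing-the-square step, with the identity $(2\lambda+2ivk)^2 = -k^2(k^2+4)$, spells out what the paper leaves implicit: for $\gamma = ik$ the quantity $\sqrt{4-\gamma^2}=\sqrt{4+k^2}$ is real, so the resulting $\lambda$ is purely imaginary.
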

\begin{proof}
    Assume $\gamma \in i\mathbf{R}$ is an eigenvalue of $M_{\pm}(\lambda)$, for a certain $\lambda \in \mathbf{C}$. From \eqref{ch_pol}, one of the following relations hold
    \begin{equation}
       \lambda = \frac{\gamma}{2}(-2v+\sqrt{4-\gamma^2}) \qquad \text{or} \qquad  \lambda = \frac{\gamma}{2}(-2v-\sqrt{4-\gamma^2}).
    \label{lambda_sign} 
    \end{equation}
    Then we conclude $\lambda \in i\mathbf{R}$.
\end{proof}
\begin{remark} Lemma \ref{lemma: essential_spectrum} gives us information on the essential spectrum of the operator $\mathcal{L}_0$. Indeed, for every $\lambda$ in the ball $B_r$ defined in \eqref{eq: ball_exceptional}, the following implication holds: $\lambda \in \sigma_{ess}(\mathcal{L}_0)$ if and only if the matrices $M_{\pm}(\lambda)$ admit a purely imaginary eigenvalue (see \cite[Theorem 3.1.11]{kapitula_book}). We conclude that $\sigma_{ess}(\mathcal{L}_0) \cap B_r \subset i\mathbf{R}$.
    \label{remark: ess_spectrum}
\end{remark}
By Lemma \ref{lemma: distinct_roots}, we have that,  for $\lambda \in D_r$, the eigenvalues $\gamma_i(\lambda)$, for $i = 1,...,4$, of $M_{\pm}(\lambda)$ are distinct and can be expressed as four are holomorphic functions. In particular, two of them have strictly positive real part, and two of them have strictly negative real part. By continuity, as $\lambda \to 0$ with $\lambda \in  D_r$, two roots converge to 0, one root converges to $\sqrt{4(1-v^2)}$, and the last one converges to $-\sqrt{4(1-v^2)}$. In order to fix the notation, we denote the eigenvalues of $M_{\pm}(\lambda)$ for $\lambda \in D_r$ in this way: \\
\begin{itemize}
    \item $\gamma_1(\lambda)$ has positive real part and tends to $+\sqrt{4(1-v^2)}$ as $\lambda \to 0$ with $\lambda \in D_r$;\\
    \item $\gamma_2(\lambda)$ has positive real part and tends to $0$ as $\lambda \to 0$ with $\lambda \in D_r$;\\
    \item $\gamma_3(\lambda)$ has negative real part and tends to $0$ as $\lambda \to 0$ with $\lambda \in D_r$;\\
    \item $\gamma_4(\lambda)$ has negative real part and tends to $-\sqrt{4(1-v^2)}$ as $\lambda \to 0$ with $\lambda \in D_r$.\\
\end{itemize}
In the next subsection we investigate the behavior of the eigenvalues of $M_{\pm}(\lambda)$ in the neighborhood of the exceptional point $\lambda=0$.
\subsubsection{Behavior of the eigenvalues at $\lambda =0$}
\label{section: eigenvalues_0}
We follow again the  discussion in \cite[Chapter 2, Section 1]{kato}. Consider the matrices $M_{\pm}(\lambda)$ in \eqref{matrices_pm} and consider a small disk $D$ in $\mathbf{C}$ near $\lambda = 0$, but excluding $\lambda = 0$ (and any other exceptional point). For $\lambda \in D$, the eigenvalues of $M_{\pm}(\lambda)$ are distinct and can be expressed by four holomorphic functions, which we denote again $\gamma_i(\lambda)$, for $i = 1,...,4$. If we move $D$ continuously around $\lambda = 0$, these four functions can be continued analytically. Then, if we bring $D$ back to its initial position after one revolution around $\lambda = 0$, we observe the four eigenvalues to have performed a permutation among themselves. A sub-group of eigenvalues that permute among themselves is called \textit{cycle} and the number of eigenvalues in a cycle is called \textit{period}. If an eigenvalue belongs to a cycle of period one, then after one revolution it goes back to its original value. In this case, the eigenvalue can be expressed as a holomorphic function in a neighborhood of $\lambda=0$. On the other hand, if an eigenvalue belongs to a cycle of period $p \geq 2$, it does not go back to its original value after one revolution. The elements of a cycle of period $p \geq 2$ constitute a branch of an analytic function (defined near $\lambda = 0$) with a branch point at $\lambda = 0$. They can be expressed as a \textit{Puiseux series}, with no negative exponents \cite[Chapter 2, Section 1, Equation 1.7]{kato}. In particular, all the eigenvalues $\gamma_i(\lambda)$, $i=1,...,4,$ are continuous at the exceptional point $\lambda=0$.\\
Now we consider the disk $D$ to be sufficiently small and sufficiently close to $\lambda=0$. In particular we consider the case $D \subset D_r$, where $D_r$ is defined in Lemma \ref{lemma: distinct_roots}. By continuity, if $\lambda \in D$, the eigenvalues $\gamma_1(\lambda)$ and $\gamma_4(\lambda)$ of $M_{\pm}(\lambda)$ are close to $\sqrt{4(1-v^2)}$ and $-\sqrt{4(1-v^2)}$, respectively. Similarly,  $\gamma_2(\lambda)$ and $\gamma_3(\lambda)$ are close to zero. As we move the disk $D$ continuously around $\lambda =0$ the roots $\gamma_i(\lambda)$, for $i=1,...,4$, can be continued analytically. If we keep $D$ sufficiently close to $\lambda=0$ during its revolution, we have that the eigenvalues $\gamma_1(\lambda)$ and $\gamma_4(\lambda)$ remain close to $\sqrt{4(1-v^2)}$ and $-\sqrt{4(1-v^2)}$, respectively, while $\gamma_2(\lambda)$ and $\gamma_3(\lambda)$ remain close to $0$. Thus, we conclude that $\gamma_1(\lambda)$ and $\gamma_4(\lambda)$ cannot perform a permutation with other eigenvalues. In particular, they belong to a cycle of period one (i.e. they return to their original value after one revolution). Thus the eigenvalues $\gamma_1(\lambda)$ and $\gamma_4(\lambda)$ can be expressed as holomorphic function in a neighborhood of $\lambda =0$. The eigenvalues $\gamma_2(\lambda)$ and $\gamma_3(\lambda)$, instead, may undergo a permutation among themselves after one revolution, and may form a cycle of period two. If this happens, these two eigenvalues cannot be expressed as holomorphic functions at $\lambda =0$. The goal of this subsection is to show that, in fact, each of the eigenvalues $\gamma_2(\lambda)$ and $\gamma_3(\lambda)$ belongs to a cycle of period one.\\ 
For simplicity, we assume for the moment that $v \in (0,1)$. Consider $r>0$ and $D_r$ as in Lemma \ref{lemma: distinct_roots}, and let $\gamma_2(\lambda)$ and $\gamma_3(\lambda)$ be the eigenvalues of the matrices $M_{\pm}(\lambda)$ defined for $\lambda \in D_r$, with $\Re\gamma_2(\lambda)>0$ and $\Re\gamma_3(\lambda)<0$ for every $\lambda \in D_r$. Choose $s \in (0,r)$ and denote $\gamma_2^* = \gamma_2(s)$ and $\gamma_3^* = \gamma_3(s)$. By Lemma \ref{lemma: distinct_roots}, we have $\gamma_2^*>0$ and $\gamma_3^*<0$. Using continuity, we choose $s>0$ small enough so that $(\gamma_{2}^*)^2 < 4(1-v^2)$ and $(\gamma_{3}^*)^2 < 4(1-v^2)$. From the characteristic polynomial in \eqref{ch_pol}, we deduce the relations
\begin{equation}
    s = \frac{\gamma_2^*}{2}(-2v+\sqrt{4-(\gamma_2^*)^2}) \qquad \text{and} \qquad s = \frac{\gamma_3^*}{2}(-2v-\sqrt{4-(\gamma_3^*)^2}).
    \label{eq: rel_lambda_gamma}
\end{equation}
Next, consider the subset $E_r$ of $D_r$, defined as
$$E_r := \{\lambda \in D_r, \quad \Im \lambda >0\}.$$ With some abuse of notation, we keep denoting $\gamma_2(\lambda)$ and $\gamma_3(\lambda)$ the eigenvalues of $M_{\pm}(\lambda)$ restricted to $E_r$. These are still analytic in $\lambda$ for $\lambda \in E_r$, and satisfy $\gamma_2(\lambda) \to \gamma_2^*$ and $\gamma_3(\lambda) \to \gamma_3^*$ as $\lambda \to s$ with $\lambda \in E_r$. Finally, we extend the eigenvalues $\gamma_2(\lambda)$ and $\gamma_3(\lambda)$ in an analytic way from $E_r$ to the set $B_r\backslash\mathbf{R}_+$, where $B_r$ is the ball defined in \eqref{eq: ball_exceptional} and $\mathbf{R}_+ = [0,+\infty)$. Recall that, by definition, $B_r$ contains a unique exceptional point, which is $\lambda=0$. This extension can be performed by following the procedure outlined above and in \cite[Chapter 2, Section 1]{kato}.\\
For $\gamma_2(\lambda)$ analytic in $B_r\backslash\mathbf{R}_+$, we define the function 
\begin{equation}
    \tilde{\gamma}_2(\theta) := \gamma_2(s e^{i\theta}), \qquad \text{for} \quad \theta \in I = (0,2\pi),
\end{equation}
 which is continuous in the interval $I$. In the following we assume that $|\tilde{\gamma}_2(\theta)|^2 < 4(1-v^2)$ for all $\theta \in I$. This condition can be obtained by initially choosing a small enough value of $s>0$, thanks to the continuity of the eigenvalues of $M_{\pm}(\lambda)$ at the exceptional point $\lambda=0$. The function $\tilde{\gamma}_2(\theta)$ satisfies $\tilde{\gamma}_2(\theta) \to \gamma_2^*$ as $\theta \to 0^+$. Then, following the discussion above, there are two possibilities for the limit $\theta \to 2\pi^-$: the first is that $\tilde{\gamma}_2$ goes back to its original value as we perform one revolution, i.e. $\tilde{\gamma}_2(\theta) \to \gamma_2^*$ as $\theta \to 2\pi^-$. In this case we have that $\gamma_{2}(\lambda)$ belongs to a cycle of period one, and hence is holomorphic in a neighborhood of $\lambda = 0$. The second is that $\tilde{\gamma}_2(\theta) \to \gamma_3^*$ as $\theta \to 2\pi^-$. In this case the eigenvalues $\gamma_2(\lambda)$ and $\gamma_3(\lambda)$ belong to a cycle of period two.\\
 We begin by stating the following lemma.
\begin{lemma}
Consider the map $f:I =(0,2\pi) \to \mathbf{C}$ defined as
\begin{equation}
    f(\theta) = se^{i\theta} - \frac{\tilde{\gamma}_2(\theta)}{2}(-2v + \sqrt{4-\tilde{\gamma}_2(\theta)^2})
    \label{eq: def_f}
\end{equation}
Then $f(\theta) = 0$ for any $\theta \in I$. 
\label{lemma: f(theta)}
\end{lemma}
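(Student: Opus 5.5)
The idea is to factor the characteristic polynomial \eqref{ch_pol} as a quadratic in $\lambda$. This shows that, along the circle $\lambda = se^{i\theta}$, the eigenvalue $\tilde{\gamma}_2(\theta)$ must satisfy one of the two relations in \eqref{lambda_sign}. A connectedness argument on $I$ will then show that it satisfies the ``$+$'' relation for every $\theta$, because it does so near $\theta=0^+$.

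\emph{Step 1: factorization.} Regard \eqref{ch_pol} as a quadratic in $\lambda$:
\[
4\lambda^2 + 8v\gamma\lambda + \gamma^4 - 4(1-v^2)\gamma^2 .
\]
Its discriminant is $64v^2\gamma^2 - 16\gamma^4 + 64(1-v^2)\gamma^2 = 16\gamma^2(4-\gamma^2)$. Hence, for every $\gamma$ and every choice of square root, the polynomial factors as
\[
4\bigl(\lambda - g_+(\gamma)\bigr)\bigl(\lambda - g_-(\gamma)\bigr),
\qquad
g_\pm(\gamma) := \frac{\gamma}{2}\bigl(-2v \pm \sqrt{4-\gamma^2}\bigr).
\]
We use the principal branch of the square root. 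By the standing assumption $|\tilde{\gamma}_2(\theta)|^2 < 4(1-v^2) < 4$, the number $4-\tilde{\gamma}_2(\theta)^2$ lies in the open right half-plane. So $\theta \mapsto \sqrt{4-\tilde{\gamma}_2(\theta)^2}$ is continuous on $I$, and so are
\[
f(\theta) = se^{i\theta} - g_+(\tilde{\gamma}_2(\theta))
\qquad\text{and}\qquad
g(\theta) := se^{i\theta} - g_-(\tilde{\gamma}_2(\theta)).
\]
Since $\tilde{\gamma}_2(\theta)$ is a root of \eqref{ch_pol} with $\lambda = se^{i\theta}$, we get $f(\theta)g(\theta) = 0$ for all $\theta \in I$.

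\emph{Step 2: $f$ and $g$ never vanish simultaneously.} Suppose $f(\theta_*) = g(\theta_*) = 0$. Then $g_+(\tilde{\gamma}_2(\theta_*)) = g_-(\tilde{\gamma}_2(\theta_*))$, that is,
\[
\tilde{\gamma}_2(\theta_*)\sqrt{4-\tilde{\gamma}_2(\theta_*)^2} = 0 .
\]
Because $|\tilde{\gamma}_2|<2$, the square root is nonzero, so this forces $\tilde{\gamma}_2(\theta_*) = 0$. But then $se^{i\theta_*} = g_+(0) = 0$, contradicting $s>0$.

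\emph{Step 3: connectedness.} By Steps 1 and 2, $I$ is the disjoint union of the two sets $\{f=0\}$ and $\{g=0\}$. Both are relatively closed in $I$ by continuity, so one of them is empty, since $I$ is connected. To decide which, let $\theta \to 0^+$. Then $\tilde{\gamma}_2(\theta) \to \gamma_2^* > 0$, and:
\begin{itemize}
\item by \eqref{eq: rel_lambda_gamma}, $f(\theta) \to s - g_+(\gamma_2^*) = 0$;
\item $g(\theta) \to s + \frac{\gamma_2^*}{2}\bigl(2v + \sqrt{4-(\gamma_2^*)^2}\bigr) > s > 0$, using $v \in (0,1)$.
\end{itemize}
Thus $g \neq 0$ near $0^+$, so $\{g=0\}$ is not all of $I$. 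Hence $\{g=0\}=\emptyset$ and $f \equiv 0$ on $I$.

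The only delicate point is the branch bookkeeping: we must make sure that $\sqrt{4-\tilde{\gamma}_2^2}$ is a continuous function of $\theta$ along the whole loop. This is exactly what the hypothesis $|\tilde{\gamma}_2(\theta)|^2<4(1-v^2)$ guarantees, since it keeps $4-\tilde{\gamma}_2^2$ away from the branch cut $(-\infty,0]$.
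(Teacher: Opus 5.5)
Your proof is correct and follows essentially the same route as the paper. At each $\theta$ the eigenvalue $\tilde{\gamma}_2(\theta)$ satisfies one of the two relations; continuity, connectedness of $I$ and the limit as $\theta\to 0^+$ then force the ``$+$'' relation on all of $I$. Your argument also makes rigorous two points the paper passes over quickly: you fix the principal branch of the square root, which is legitimate because $|\tilde{\gamma}_2|^2<4(1-v^2)$, and you replace the paper's unexplained uniform bound $|\tilde{\gamma}_2\sqrt{4-\tilde{\gamma}_2^2}|\geq C$ with pointwise non-vanishing from $\tilde{\gamma}_2(\theta)\neq 0$, which suffices for the clopen-set argument.
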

\begin{proof}
For any value of $\theta \in I$, the complex number $\tilde{\gamma}_2(\theta)$ is a root of the characteristic polynomial \eqref{ch_pol} for $\lambda = s e^{i\theta}$. By \eqref{ch_pol}, for any $\theta \in I$ it holds one of the following relations: $$ se^{i\theta} = \frac{\tilde{\gamma}_2(\theta)}{2}(-2v+\sqrt{4-\tilde{\gamma}_2(\theta)^2}) \qquad \text{or} \qquad  se^{i\theta} = \frac{\tilde{\gamma}_2(\theta)}{2}(-2v-\sqrt{4-\tilde{\gamma}_2(\theta)^2}).$$ This implies that the possible values that the function $f(\theta)$ in \eqref{eq: def_f} can attain are
\begin{equation}
    f(\theta) = 0 \ \ \text{or} \ \ f(\theta) = -\tilde{\gamma}_2(\theta)\sqrt{4-\tilde{\gamma}_2(\theta)^2}
\end{equation}
Since $\tilde{\gamma}_2(\theta)$ is continuous for $\theta \in I$, then also $f(\theta)$ is continuous for $\theta \in I$. Moreover, we know that $\lim_{\theta \to 0^+}f(\theta) = 0$, by \eqref{eq: rel_lambda_gamma}, and that $|\tilde{\gamma}_2(\theta)\sqrt{4-\tilde{\gamma}_2(\theta)^2}| \geq C >0$ for all $\theta \in I$. By continuity, it must be $f(\theta) = 0$ for all $\theta \in I$.  
\end{proof}
We can now conclude the following Lemma.
\begin{lemma}
   The eigenvalue functions $\gamma_{2}(\lambda)$ and $\gamma_{3}(\lambda)$ belong to a cycle of period one. Hence they can be expressed as holomorphic functions in a neighborhood of $\lambda =0$. 
   \label{lemma: regular_roots}
\end{lemma}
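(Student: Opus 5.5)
The plan is to use Lemma \ref{lemma: f(theta)} to decide which of the two alternatives described before it occurs for $\tilde{\gamma}_2$ as $\theta\to 2\pi^-$. By Lemma \ref{lemma: f(theta)}, for every $\theta\in I$ the continued eigenvalue $\tilde{\gamma}_2(\theta)$ satisfies
\begin{equation*}
se^{i\theta} = \frac{\tilde{\gamma}_2(\theta)}{2}\Bigl(-2v+\sqrt{4-\tilde{\gamma}_2(\theta)^2}\Bigr).
\end{equation*}
Here $\sqrt{\cdot}$ is the principal branch. Since $|\tilde{\gamma}_2(\theta)|^2<4(1-v^2)<4$, the quantity $4-\tilde{\gamma}_2^2$ stays in the disk of radius $4(1-v^2)$ centered at $4$, so it never meets the cut $(-\infty,0]$. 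Hence the map $\theta\mapsto\sqrt{4-\tilde{\gamma}_2(\theta)^2}$ is continuous on $I$ and extends continuously to the closed interval. I would then pass to the limit $\theta\to 2\pi^-$. By the discussion preceding Lemma \ref{lemma: f(theta)}, the limit of $\tilde{\gamma}_2(\theta)$ is either $\gamma_2^*$ or $\gamma_3^*$. In both cases the relation above survives in the limit, which gives
\begin{equation*}
s = \frac{\gamma_*}{2}\Bigl(-2v+\sqrt{4-\gamma_*^2}\Bigr), \qquad \gamma_*:=\lim_{\theta\to2\pi^-}\tilde{\gamma}_2(\theta).
\end{equation*}

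The key step is to rule out $\gamma_*=\gamma_3^*$. Assume it holds. Then \eqref{eq: rel_lambda_gamma} supplies both
\begin{equation*}
s=\frac{\gamma_3^*}{2}\Bigl(-2v-\sqrt{4-(\gamma_3^*)^2}\Bigr) \qquad\text{and}\qquad s=\frac{\gamma_3^*}{2}\Bigl(-2v+\sqrt{4-(\gamma_3^*)^2}\Bigr).
\end{equation*}
Subtracting the two gives $\gamma_3^*\sqrt{4-(\gamma_3^*)^2}=0$. This is impossible, because $\gamma_3^*<0$ and $(\gamma_3^*)^2<4(1-v^2)<4$. The same contradiction can be read off from signs. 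Since $v\in(0,1)$ and $|\gamma_3^*|<2\sqrt{1-v^2}$, we have $\sqrt{4-(\gamma_3^*)^2}>2v$, so the bracket $-2v+\sqrt{4-(\gamma_3^*)^2}$ is positive. Multiplying it by $\gamma_3^*/2<0$ gives a negative number, which cannot equal $s>0$. Therefore $\gamma_*=\gamma_2^*$, so $\gamma_2$ returns to its original value after one revolution and belongs to a cycle of period one. Because $\gamma_1$ and $\gamma_4$ were already shown to form cycles of period one, the only remaining element $\gamma_3$ must also form a cycle of period one. By \cite[Chapter 2, Section 1]{kato}, a cycle of period one is single-valued near the exceptional point, so $\gamma_2$ and $\gamma_3$ extend holomorphically across $\lambda=0$; they are bounded and continuous there, so there is a removable singularity.

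To finish, I would treat $v\in(-1,0)$ by the symmetry $\gamma\mapsto-\gamma$. The characteristic polynomial \eqref{ch_pol} is invariant under $(v,\gamma)\mapsto(-v,-\gamma)$, so the roots for $-v$ are the negatives of the roots for $v$. Applying the argument to $-v$, or repeating it with $\gamma_3$ in the role of $\gamma_2$, gives the result.

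The main obstacle I expect is justifying the passage to the limit at $\theta\to2\pi^-$ with the same branch of the square root. This requires controlling the location of $4-\tilde{\gamma}_2^2$ uniformly in $\theta$, which is exactly what the smallness assumption $|\tilde{\gamma}_2|^2<4(1-v^2)$ provides.
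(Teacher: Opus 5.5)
Your proposal is correct and follows the paper's proof essentially step for step. You pass to the limit $\theta\to 2\pi^-$ in the identity $f(\theta)=0$ from Lemma~\ref{lemma: f(theta)}, combine the resulting ``$+$'' relation for $\gamma_3^*$ with the ``$-$'' relation in \eqref{eq: rel_lambda_gamma}, and reach the contradiction $\gamma_3^*\sqrt{4-(\gamma_3^*)^2}=0$.

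Your additions correctly fill in details the paper leaves implicit:
\begin{itemize}
\item continuity of the principal square root, since $4-\tilde{\gamma}_2^2$ stays in the disk of radius $4(1-v^2)$ about $4$;
\item the direct sign argument;
\item getting $\gamma_3$ by elimination in the permutation;
\item the $(v,\gamma)\mapsto(-v,-\gamma)$ symmetry for $v<0$.
\end{itemize}
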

\begin{proof}
    Suppose by contradiction that $\gamma_2(\lambda)$ belongs to a cycle of period two. Then $\tilde{\gamma}_2(\theta) \to \gamma_3^* \neq \gamma_2^*$, as $\theta \to 2\pi^-$. By Lemma \ref{lemma: f(theta)}, this implies that 
    \begin{equation}
        0=\lim_{\theta \to 2\pi^-}f(\theta) = s - \frac{\gamma_3^*}{2}(-2v + \sqrt{4-(\gamma_3^*)^2}). 
    \end{equation}
    From the relations in \eqref{eq: rel_lambda_gamma}, we have
    \begin{equation}
        \frac{\gamma_3^*}{2}\bigl(-2v - \sqrt{4-(\gamma_3^*)^2}\bigr) = \frac{\gamma_3^*}{2}\bigl(-2v + \sqrt{4-(\gamma_3^*)^2}\bigr),
    \end{equation}
    which means 
    \begin{equation}
        \gamma_3^*\sqrt{4-(\gamma_3^*)^2}=0
    \end{equation}
    %Then notice that $\tilde{\gamma}_2(0) = \gamma_2(r)=\gamma_2^*>0$. So, if we choose $\delta$ small enough, the left hand side of the equation above is strictly positive. Similarly, $\tilde{\gamma}_3(0) = \gamma_3(r) = \gamma_3^* < 0$ so that the right hand side of the equation above is strictly negative.
    which is a contradiction. We conclude that $\tilde{\gamma}_2(\theta) \to \gamma_2^*$ as $\theta \to 2\pi^-$, and that $\gamma_2(\lambda)$ belongs to a cycle of period one. 
\end{proof}
Finally, a similar result holds if $v \in (-1,0)$. We conclude that the four eigenvalues $\gamma_i(\lambda)$, for $i = 1,...,4$, of $M_{\pm}(\lambda)$ defined in Lemma \ref{lemma: distinct_roots} can be extended to a neighborhood of $\lambda =0$ in an analytic way.\\
In the next section we are going to compute the eigenvectors of $M_{\pm}(\lambda)$ associated with the eigenvalues $\gamma_i(\lambda)$, for $i = 1,...,4$, and we are going to study their regularity at $\lambda =0$.
\subsubsection{Regularity of the eigenvectors}
\label{subsection: eigenvectors}
In the half-disk $D_r$, the matrices $M_{\pm}(\lambda)$ have four distinct eigenvalues $\gamma_i(\lambda)$, for $i = 1,...,4$, which are holomorphic functions. Moreover, these functions can be extended in an analytic way in a neighborhood of $\lambda =0$, point at which the roots $\gamma_2(0)$ and $\gamma_3(0)$ coincide. The goal of this section is to study the eigenvectors associated with these eigenvalues, and to show that they can be chosen as analytic vector-valued functions of $\lambda$ in a neighborhood of the exceptional point $\lambda=0$. In particular, we show that they are analytic in the half-disk $D_r$, for a possibly smaller value of $r>0$.\\
For each $i = 1,...,4$, we denote by $v_i^{+}(\lambda)$ the eigenvector associated with the eigenvalue $\gamma_i(\lambda)$ for the matrix $M_{+}(\lambda)$. Similarly, for each $i=1,...,4$ we define $v^-_i(\lambda)$ the eigenvector associated with the eigenvalue $\gamma_i(\lambda)$ for the matrix $M_-(\lambda)$.\\
We begin by noticing that the eigenvalues $\gamma_1(\lambda)$ and $\gamma_4(\lambda)$ are simple at $\lambda =0$. As a general result from analytic perturbation theory \cite[Theorem 2]{greenbaum}, the eigenvectors $v_1^{\pm}(\lambda)$ and $v_4^{\pm}(\lambda)$ can be chosen to be analytic in a neighborhood of $\lambda =0$. Next, we compute explicitly the eigenvectors $v_2^{\pm}(\lambda)$ and $v_3^{\pm}(\lambda)$ as functions of $\gamma_2(\lambda)$ and $\gamma_3(\lambda)$, respectively. For example, $v_2^-(\lambda)$ can be obtained solving the equation $\bigl(M_-(\lambda)-\gamma_2(\lambda) \text{id}\bigr) \  v_2^-(\lambda) =0$. We obtain
\begin{equation}
   v_2^-(\lambda) = [-v, \ \  \frac{2\lambda - 4v\sqrt{1-v^2}+2v\gamma_2}{\gamma_2^2-4v^2}(-v), \ \  -v\gamma_2, \ \ \frac{2\lambda - 4v\sqrt{1-v^2}+2v\gamma_2}{\gamma_2^2-4v^2}(-v)\gamma_2]^T
        \label{eq: v_2}
\end{equation}
The above normalization has been chosen for later convenience. Similarly, one obtains 
\begin{equation}
   v_2^+(\lambda) = [-v, \ \  \frac{2\lambda + 4v\sqrt{1-v^2}+2v\gamma_2}{\gamma_2^2-4v^2}(-v), \ \  -v\gamma_2, \ \ \frac{2\lambda + 4v\sqrt{1-v^2}+2v\gamma_2}{\gamma_2^2-4v^2}(-v)\gamma_2]^T
\end{equation}
Since $\gamma_2(\lambda)$ is analytic in a neighborhood of $\lambda = 0$, with $\gamma_2(0) =0$, we have have that $v_2^{\pm}(\lambda)$ are also analytic in a neighborhood of $\lambda = 0$. For the eigenvectors $v_3^{\pm}(\lambda)$ we have 
\begin{equation}
     v_3^-(\lambda) = [-v, \ \  \frac{2\lambda - 4v\sqrt{1-v^2}+2v\gamma_3}{\gamma_3^2-4v^2}(-v), \ \  -v\gamma_3, \ \ \frac{2\lambda - 4v\sqrt{1-v^2}+2v\gamma_3}{\gamma_2^3-4v^2}(-v)\gamma_3]^T,
\end{equation}
and
\begin{equation}
     v_3^+(\lambda) = [-v, \ \  \frac{2\lambda + 4v\sqrt{1-v^2}+2v\gamma_3}{\gamma_3^2-4v^2}(-v), \ \  -v\gamma_3, \ \ \frac{2\lambda + 4v\sqrt{1-v^2}+2v\gamma_3}{\gamma_3^2-4v^2}(-v)\gamma_3]^T,
     \label{eq: v_3}
\end{equation}
 We conclude that also $v^{\pm}_3(\lambda)$ are analytic in a neighborhood of $\lambda=0$. Finally, we notice that $v_{2}^{-}(0) = v_3^-(0)$ and $v_{2}^{+}(0) = v_3^+(0)$, i.e. the Jordan canonical form of the matrices $M_{\pm}(0)$ has a block $\begin{pmatrix}
     0 & 1\\
     0 & 0
 \end{pmatrix}$, as in \cite{kapitula_dark}. 
 \begin{remark}
     For the case $v=0$, one can adopt the normalization in introduced in \cite[Equation (3.25)]{kapitula_dark}, which is obtained by imposing $\pm\frac{1}{2}(\gamma_2-v)$ as the first component of $v_2^{\pm}(\lambda)$, and similarly for $v_3^{\pm}(\lambda)$. In the following, the case $v=0$ will be recovered in the limit $v \to 0$.
 \end{remark}
\subsection{Definition of the Evans function} 
\label{section: def_evans} The goal of this section is to introduce the Evans function associated to \eqref{first_order_system}. We first define it in the half-disk $D_r$, and we later extend it to a neighborhood of the origin, inside the essential spectrum.\\
Consider the half-disk $D_r$ defined in Lemma \ref{lemma: distinct_roots}. In this set, we have four distinct eigenvalues $\gamma_i(\lambda)$, for $i=1,...,4$, of the matrices $M_{\pm}(\lambda)$, which are holomorphic functions. Similarly, we have four linearly independent eigenvectors $v_i^+(\lambda)$ for the matrix $M_+(\lambda)$ and four linearly independent eigenvectors $v_i^-(\lambda)$ for the matrix $M_-(\lambda)$, all holomorphic in $D_r$. We have the following lemma.
\begin{lemma}
   For any $\lambda \in D_r$, there exist four solutions $Y_i(\lambda,x)$, for $i=1,...,4$, to equation \eqref{first_order_system} such that
    \begin{equation}
        Y_{j}(\lambda,x) \to v^-_{j}(\lambda) \ e^{\gamma_{j}(\lambda)x} \ \ \ \text{as} \ \ x \to -\infty, \qquad \text{for} \quad j=1,2  
        \label{tail_1}
    \end{equation}
    and
    \begin{equation}
        Y_{j}(\lambda,x) \to v^+_{j}(\lambda) \ e^{\gamma_{j}(\lambda)x} \ \ \ \text{as} \ \ x \to +\infty, \qquad \text{for} \quad j=3,4  
        \label{tail_2}
    \end{equation}
    \label{lemma: ind_sol}
\end{lemma}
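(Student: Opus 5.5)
This is a standard asymptotic ODE result: the system \eqref{first_order_system} is a perturbation of $Y'=M_\pm(\lambda)Y$ by a matrix $R_\pm(\lambda,x):=M(\lambda,x)-M_\pm(\lambda)$ that decays exponentially. The plan is to build each $Y_j$ as a fixed point of a Volterra-type integral equation, following the Levinson / Coppel construction used in \cite{kapitula_book}. Concretely:

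\begin{itemize}
\item Since the potential terms in $M(\lambda,x)$ are built from $\phi_0$, and $|\phi_0|^2\to1$, $\Re\phi_0\to\pm\sqrt{1-v^2}$, $\Im\phi_0=v$ exponentially fast, there are $C,\alpha>0$ (with $\alpha$ close to $2\sqrt{1-v^2}$) such that $|R_\pm(\lambda,x)|\le Ce^{-\alpha|x|}$ for $\pm x\ge0$, uniformly in $\lambda$.
\item For $\lambda\in D_r$, Lemma \ref{lemma: distinct_roots} gives four distinct eigenvalues, so $M_\pm(\lambda)$ is diagonalizable with the holomorphic eigenbases $v_i^\pm(\lambda)$ of Section \ref{subsection: eigenvectors}. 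Write $T_\pm(\lambda)=[v_1^\pm,\dots,v_4^\pm]$.
\end{itemize}

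For $j=3,4$ at $+\infty$, set $Y=e^{\gamma_j x}Z$. Then $Z$ satisfies $Z'=(M_+-\gamma_j)Z+R_+Z$, and I look for $Z$ solving
\begin{equation*}
Z(x)=v_j^+ -\int_x^{\infty}\Phi_{\ge}(x-y)R_+(y)Z(y)\,dy+\int_{x_0}^{x}\Phi_{<}(x-y)R_+(y)Z(y)\,dy .
\end{equation*}
Here $\Phi(t)=e^{(M_+-\gamma_j)t}$ is split with the spectral projections of $T_+$. The part $\Phi_{\ge}$ corresponds to eigenvalues $\gamma_i$ with $\Re(\gamma_i-\gamma_j)\ge0$ and is integrated from $+\infty$. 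The part $\Phi_<$ corresponds to $\Re(\gamma_i-\gamma_j)<0$ and is integrated from $x_0$.

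On $[x_0,\infty)$ the operator on the right is a contraction in $L^\infty$, because $\|\Phi_{\ge}(t)\|$ is bounded for $t\le0$, $\|\Phi_<(t)\|$ is bounded for $t\ge0$, and $\int_{x_0}^\infty Ce^{-\alpha y}dy$ is small once $x_0$ is large. The fixed point $Z$ satisfies $Z(x)\to v_j^+$. I then take $Y_j:=e^{\gamma_jx}Z$ and extend it to all of $\mathbf{R}$ by solving the linear ODE. The case $j=1,2$ at $-\infty$ is symmetric: integrate from $-\infty$ for the modes with $\Re(\gamma_i-\gamma_j)\le0$.

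The contraction and the limit give the precise statement $Y_j e^{-\gamma_jx}\to v_j^\pm$, which is the meaning of \eqref{tail_1}--\eqref{tail_2}. They also give the error bound $O(e^{-\alpha|x|})$, possibly with a factor $e^{\eta|x|}$ from a spectral gap. Holomorphy in $\lambda$ follows because the fixed-point map depends holomorphically on $\lambda$ and is a uniform contraction on compact subsets of $D_r$.

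The main obstacle is uniformity as $\lambda\to0$. There $\gamma_2$ and $\gamma_3$ coalesce and $T_\pm(\lambda)$ degenerates, since $v_2^\pm(0)=v_3^\pm(0)$, so the projection norms blow up. At fixed $\lambda\in D_r$ this is harmless, because the constants may depend on $\lambda$. For the later extension I would instead use the gap-lemma idea \cite{gardner_zumbrun,kapitula&sandstede}. The decay rate $\alpha$ strictly exceeds $|\Re(\gamma_i-\gamma_j)|$ uniformly near $0$, so one can use the exponential dichotomy with a weight $e^{\eta x}$, $0<\eta<\alpha$. Together with the Jordan-block structure of $M_\pm(0)$ (a bound of the form $(1+|t|)e^{\eta|t|}$), this absorbs the degeneracy. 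The lemma as stated, however, only requires the pointwise-in-$\lambda$ construction.
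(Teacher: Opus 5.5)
Your proposal is correct and is essentially the paper's proof. The paper simply invokes the Coddington--Levinson theorem \cite[Chapter 3, Theorem 8.1]{coddington}, using that the $\gamma_i(\lambda)$ are distinct on $D_r$ and that $M(\lambda,x)-M_\pm(\lambda)$ is integrable, and your Volterra fixed-point construction with the split propagators $\Phi_{\ge}$, $\Phi_{<}$ is exactly the proof of that theorem. (Your closing gap-lemma remark is not needed here; if kept, restrict it to the slow pair $\gamma_2,\gamma_3$, since the fast--slow separations tend to $2\sqrt{1-v^2}=\alpha$ as $\lambda\to0$ rather than staying below $\alpha$.)
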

\begin{proof}
    For any $\lambda \in D_r$ the four roots $\gamma_i(\lambda)$ are distinct, with $\Re\gamma_{1,2}(\lambda)>0$ and $\Re\gamma_{3,4}(\lambda)<0$. The existence of four solutions with exponential tails as in \eqref{tail_1} and \eqref{tail_2} follows by the Coddington–Levinson’s Theorem for ODEs \cite[Chapter 3, Theorem 8.1]{coddington}.  
\end{proof}
We can now define the Evans function associated to \eqref{first_order_system} at $\varepsilon=0$.
\begin{definition}
    The Wronskian determinant of the four linearly independent solutions in Lemma \ref{lemma: ind_sol} at any $x \in \mathbf
    R$ is called the Evans function $E(\lambda)$ of the spectral problem \eqref{first_order_system}. 
    \label{evans_f}
\end{definition}
\begin{remark}
    Because the Wronskian determinant of any four particular solutions of the ODE \eqref{first_order_system} is independent of $x$, the values of the Evans function $E(\lambda)$ are independent of $x$.
\end{remark}
Using the same notation as in \cite{kapitula_dark}, we denote as fast modes $Y_{f}^{-}(\lambda,x)$ and $Y_{f}^{+}(\lambda,x)$ the solutions $Y_1(\lambda,x)$ and $Y_4(\lambda,x)$, respectively, from Lemma \ref{lemma: ind_sol}. We recall that the first one decays at $-\infty$ with decay rate $\gamma_{1}(\lambda)$ and the second decays at $+\infty$ with decay rate $\gamma_{4}(\lambda)$. Similarly, we denote as slow modes $Y_{s}^{-}(\lambda,x)$ and $Y_{s}^{+}(\lambda,x)$ the solutions $Y_2(\lambda,x)$ and $Y_3(\lambda,x)$, respectively. In this notation, the Evans function can be written, for $\lambda \in D_r$, as
\begin{equation}
    E(\lambda) = (Y_s^- \wedge Y_f^- \wedge Y_s^+ \wedge Y_f^+ )(\lambda,0).
\end{equation}
The Evans function $E(\lambda)$ is analytic for $\lambda$ away from the essential spectrum $\sigma_{ess}(\mathcal{L}_0)$ of the operator $\mathcal{L}_0$ in \eqref{spectral_problem} (see, for example, \cite[Theorem 4.1]{sandstede_review}). In particular,  $E(\lambda)$ is analytic in the half-disk $D_r$ (see Remark \ref{remark: ess_spectrum}). Using the fact that the eigenvalues $\gamma_i(\lambda)$ and eigenvectors $v_i^{\pm}(\lambda)$, for $i=1,...,4$, of $M_{\pm}(\lambda)$ are analytic at $\lambda=0$, we can extend the Evans function to a neighborhood of $\lambda =0$ in an analytic way.
\begin{lemma}
    The Evans function $E(\lambda)$ is analytically continued in $\lambda$ near $\lambda =0$.
    \label{lemma: cont_evans}
\end{lemma}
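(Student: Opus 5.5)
The plan is to show that each of the four solutions $Y_s^\pm, Y_f^\pm$ of Lemma \ref{lemma: ind_sol} extends analytically in $\lambda$ to a full ball $B_\delta=\{|\lambda|<\delta\}$, $\delta\le r$. The Evans function, being the determinant of these four solutions at $x=0$, then inherits the analyticity. The tool is the Gap Lemma of Gardner--Zumbrun and Kapitula--Sandstede. We already know from Section \ref{section: eigenvalues_0} and Subsection \ref{subsection: eigenvectors} that the $\gamma_i(\lambda)$ and the eigenvectors $v_i^\pm(\lambda)$ are analytic near $\lambda=0$. We also know that $M(\lambda,x)-M_\pm(\lambda)$ decays like $e^{-\eta|x|}$ with $\eta=2\sqrt{1-v^2}$ (from the $\tanh$ profile), uniformly in $\lambda$, and it is affine in $\lambda$. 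In fact the $\lambda$-dependence cancels in the difference, since $\lambda$ appears only as a constant entry.

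For a given mode, say $Y_s^-$ with asymptotic rate $\gamma_2(\lambda)$ at $-\infty$, I will write $Y=e^{\gamma_2(\lambda)x}Z$. The function $Z$ then solves
\begin{equation*}
Z'=\bigl(M_-(\lambda)-\gamma_2(\lambda)\bigr)Z+\bigl(M(\lambda,x)-M_-(\lambda)\bigr)Z.
\end{equation*}
I will set this up as a Volterra-type integral equation on $(-\infty,x_0]$, with $Z\to v_2^-(\lambda)$ as $x \to -\infty$. The spectral projections of $M_-(\lambda)-\gamma_2(\lambda)$ are split into those eigenvalues $\gamma_j-\gamma_2$ with real part $\ge -\eta/2$, which are integrated from $-\infty$, and the rest, which are integrated from $x_0$. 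The resulting contraction in the weighted space of bounded continuous functions works because the gaps satisfy $|\Re(\gamma_j(\lambda)-\gamma_2(\lambda))|<\eta/2$ for every $j$ with $\gamma_j$ near $0$, uniformly for $|\lambda|<\delta$. This is exactly the gap condition: the perturbation decays faster than any relative growth among the slow modes.

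The main obstacle is the Jordan block at $\lambda=0$, where $\gamma_2(0)=\gamma_3(0)$ and $v_2^\pm(0)=v_3^\pm(0)$. Spectral projections onto $\gamma_2$ and $\gamma_3$ separately blow up as $\lambda\to0$. To get around this, I will use the total (Riesz) projection onto the group $\{\gamma_2,\gamma_3\}$, which is analytic in $\lambda$ through $0$, together with the projections onto $\gamma_1$ and $\gamma_4$. The evolution $e^{(M_- -\gamma_2)x}$ restricted to the slow group grows at most polynomially, like $(1+|x|)e^{\epsilon|x|}$, which is still dominated by $e^{-\eta|x|}$. So the fixed-point map is a uniform contraction, analytic in $\lambda$, and its fixed point is analytic in $\lambda\in B_\delta$. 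The fast modes $Y_f^\pm$ are handled the same way but more easily, since $\gamma_1$ and $\gamma_4$ are simple and isolated.

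Finally, for $\lambda\in D_r\cap B_\delta$ these constructions coincide with the solutions of Lemma \ref{lemma: ind_sol}, by uniqueness of solutions with the prescribed asymptotics. Hence $E(\lambda)=(Y_s^-\wedge Y_f^-\wedge Y_s^+\wedge Y_f^+)(\lambda,0)$ is an analytic function on $B_\delta$ that agrees with the original Evans function on $D_r\cap B_\delta$, which gives the analytic continuation.
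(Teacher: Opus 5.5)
Your overall strategy (Gap Lemma mode by mode, with the Riesz projection onto the $0$-group $\{\gamma_2,\gamma_3\}$ to get around the Jordan block) is viable. However, the splitting you prescribe for $Y_s^-$ is backwards, and as stated the fixed-point map is not defined on bounded functions. With $B=M_-(\lambda)-\gamma_2(\lambda)$ the relative rates are $\gamma_1-\gamma_2\approx 2\sqrt{1-v^2}=\eta$, the $0$-group $\approx 0$, and $\gamma_4-\gamma_2\approx-\eta$.

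Your rule (real part $\ge-\eta/2$ integrated from $-\infty$) puts the $\gamma_1$-component into $\int_{-\infty}^x e^{B(x-y)}P_1(M-M_-)(y)Z(y)\,dy$. Its integrand is of size $e^{(\Re(\gamma_1-\gamma_2)-\eta)(x-y)}e^{\eta x}$. At $\lambda=0$ there is an exact resonance $\gamma_1(0)-\gamma_2(0)=\eta$, and the integral diverges for generic bounded $Z$. For $\lambda\neq0$ nearby it either diverges or has a constant of order $1/|\eta-\Re(\gamma_1-\gamma_2)|$, so there is no uniform contraction.

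Symmetrically, the $\gamma_4$-component integrated from $x_0$ has kernel $e^{(\gamma_4-\gamma_2)(x-y)}$ with $x\le y$. This grows like $e^{\eta(y-x)}$ and produces a term of size $e^{\eta|x|}e^{2\eta x_0}$ as $x\to-\infty$, so the map does not preserve bounded functions.

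The rule you wrote is the correct one at $+\infty$: for $Y_s^+$ one integrates from $+\infty$ the components with real part $\ge-\eta/2$. At $-\infty$ it must be mirrored. Integrate the $0$-group and the $\gamma_4$-component from $-\infty$, and the $\gamma_1$-component from $x_0$, where $|e^{B(x-y)}P_1|\le C$ for $x\le y$. With this split each piece is bounded by $Ce^{\eta x_0}$ (using your $(1+|t|)e^{\epsilon|t|}$ bound on the $0$-group), uniformly for $|\lambda|<\delta$. The rest of your argument then goes through.

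You should also account for a consequence of this fix. Since the $\gamma_1$-component is pinned at $x_0$, the resulting $Y_s^-$ depends on $x_0$ and is determined only modulo multiples of $Y_f^-$; likewise $Y_s^+$ is determined only modulo $Y_f^+$. The same non-uniqueness already holds for the solutions of Lemma~\ref{lemma: ind_sol} on $D_r$. So the identification on $D_r\cap B_\delta$ is uniqueness modulo the fast mode. This is harmless only because $Y_f^\pm$ appear in the same wedge product; ``uniqueness of solutions with the prescribed asymptotics'' as you state it is false.

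Once repaired, your argument is a genuine alternative to the paper's Appendix. The paper instead works in $\Lambda^2\mathbf{C}^4$ with $v_1^-\wedge v_2^-$ and $\alpha_-=\gamma_1+\gamma_2$, compactifies via $\tau=\tanh(\kappa x)$, and obtains $Y_\pm$ as strong unstable manifolds following Kapitula--Sandstede. Because the fast mode is bundled with the slow one there, the resonance $\gamma_1-\gamma_2\approx\eta$ never enters; only the small gap $\gamma_3-\gamma_2$ must be beaten by $2\kappa$, and there is no $x_0$-ambiguity. The price is that it only gives analyticity of the $2$-forms $Y_\pm$. Your route makes each mode individually analytic (modulo fast modes), which is closer to what the subsequent Taylor expansion actually differentiates.
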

\begin{proof}
    Following \cite[Lemma 4.7]{pelinovsky}, we notice that ODE system in \eqref{first_order_system} depends analytically in $\lambda \in \mathbf{C}$ and the boundary conditions \eqref{tail_1} and \eqref{tail_2} are also analytic in $\lambda$ for $\lambda$ close to zero. Thus, at any fixed $x \in \mathbf{R}$, the Evans function is analytic in $\lambda$  for $\lambda$ close to zero, as the Wronskian determinant of the four fundamental solutions of Lemma \ref{lemma: ind_sol}. 
\end{proof}
\begin{remark}
    In Appendix \ref{Section: appendix} we give a more detailed proof of Lemma \ref{lemma: cont_evans}, which is based on  \cite[Proposition 2.7 and Lemma 2.10]{kapitula&sandstede}.
\end{remark}
In the next section we study the Taylor expansion of the Evans function around $\lambda=0$. The first step is to study $E(\lambda)$ at the point $\lambda =0$.
\subsubsection{The Evans function at $\lambda =0$} We begin by considering the ODE system in \eqref{first_order_system} at $\lambda = 0$. It reads
\begin{equation}
    Y'(0,x) = M(0,x)Y(0,x) 
    \label{ode_0_lambda}
\end{equation}
with 
\begin{equation} M(0,x) = 
    \begin{pmatrix}
    0 & 0 & 1 & 0 \\
    0 & 0 & 0 & 1 \\
    -2(1-|\phi_0|^2)+4(\Re\phi_0)^2 & +4\Re\phi_0\Im\phi_0 & 0 & -2v \\
    +4\Re\phi_0\Im\phi_0 & -2(1-|\phi_0|^2)+4(\Im\phi_0)^2 & +2v & 0 
\end{pmatrix}.
\label{matrix_M0}
\end{equation}
We look for its four fundamental solutions. The first one is associated to the invariance under translations:
\begin{equation}
    u_1(x):= [\phi_0'(x),0,\phi_0''(x),0]^T
\end{equation}
The second one is known to exists from the theory of ODE \cite{coddington}, even if its explicit form is unknown:
\begin{equation}
    u_2(x) := [u_2^1,u_2^2,u_2^3,u_2^4]^T =\text{unknown}
\end{equation}
The third one is associated to the invariance under phase shifts:
\begin{equation}
    u_3(x) := [-v, \Re\phi_0, 0 , \phi_0']^T
\end{equation}
The last one is the linearly growing mode $(f)$ in Proposition \ref{proposition_L}:
\begin{equation}
    u_4(x) := \frac{1}{2} [-2vx-3\partial_v\Re\phi_0, \  2x\Re\phi_0 -2 , \  -2v-3(\partial_v\Re\phi_0)',  \ 2\Re\phi_0 + 2x \phi_0']^T
\end{equation}
Since $M(0,x)$ is traceless, the Wronskian determinant of the four fundamental solutions to the ODE system in \eqref{ode_0_lambda} is a constant finite number. We normalize $u_2(x)$ such that
\begin{equation}
    u_1(x)\wedge u_2(x)\wedge u_3(x)\wedge u_4(x) =1, \ \ \ \text{for any} \ \ x \in \mathbf{R}.
\end{equation}
We deduce that  $|u_2(x)| \to \infty$ exponentially fast as $x \to \pm \infty$. \\
We observe that there exists a unique fast and a unique slow mode at $\lambda = 0$ for \eqref{first_order_system}. We set $$Y_f^{+}(0,x) = Y_f^{-}(0,x) = u_1(x) \qquad \text{and} \qquad Y_s^{+}(0,x) = Y_s^{+}(0,x) = u_3(x).$$ Then, we have $Y^-_f(0,x)e^{-\gamma_1(0)x} \to v^-_1(0)$ and $Y^-_s(0,x) \to v_2^-(0)$ as $x \to -\infty$. Similarly, we have $Y^+_f(0,x)e^{-\gamma_4(0)x} \to v^+_4(0)$ and $Y^+_s(0,x) \to v_3^+(0)$ as $x \to +\infty$. The extended Evans function at $\lambda =0$ is defined as
\begin{equation}
    E(0) =  (Y_s^- \wedge Y_f^- \wedge Y_s^+ \wedge Y_f^+ )(0,0),
\end{equation}
and we have $E(0)=0$. \\
In the next sections we compute derivatives of the form $\partial_{\lambda}^k E (0)$, for $k \in \mathbf{N}$. As we will see, the first non-zero term in the expansion will be at order $k=3$, i.e. the zero of $E(\lambda)$ at $\lambda =0$ is of order three, and this fact is related to the presence of phase shift and translational symmetries. \\
In practice, in order to compute derivatives of the Evans function, we first need to compute derivatives of the fast and slow modes, such as $\partial^k_{\lambda}(Y_{f}^{\pm} -Y_{f}^{\pm})$ or $\partial^k_{\lambda}(Y_{s}^{\pm} -Y_{s}^{\pm})$. The next subsection is devoted to the fast modes.
\subsubsection{Derivatives of the fast modes}
\label{section: der_fast_modes}
In this subsection we compute the first and second derivatives of the fast modes, i.e. $\partial_{\lambda}(Y_f^{\pm}-Y_f^{\pm})(\lambda,x)$ and  $\partial_{\lambda}^2(Y_f^{\pm}-Y_f^{\pm})(\lambda,x)$, at $\lambda =0$ and $x =0$.\\
    We begin by differentiating the ODE system \eqref{first_order_system} at $\lambda=0$ to obtain
    \begin{equation}
        (\partial_\lambda Y_f^{\pm})'(0,x) = M(0,x)\partial_{\lambda}Y_f(0,x) + \partial_{\lambda}M(0,x)Y_f^{\pm}(0,x),
        \label{eq: der_system}
    \end{equation}
    where we recall that $Y_{f}^{\pm}(0,x) = u_1(x)$ and the non-homogeneous term satisfies $\partial_{\lambda}M(0,x)Y_f^{\pm}(0,x) = [0,0, 0, 2\phi_0']^T$. A particular solution to the non-homogeneous ODE \eqref{eq: der_system} is 
    \begin{equation}
        f(x)= [\partial_v\Re\phi_0,1, (\partial_v\Re\phi_0)', 0]^T,
    \end{equation}
    where $\partial_v$ represents the derivative with respect to the parameter $v$. To this particular solution we can add any homogeneous solution $u_i(x)$ for $i = 1,..., 4$. If we require $\partial_\lambda Y_f^{\pm}(0,x)$ to decay to zero as $x \to \pm \infty$, we obtain
\begin{equation}
    \partial_\lambda Y_f^{\pm}(0,x) = f(x) \mp \frac{1}{\sqrt{1-v^2}} \ u_3(x). 
\end{equation}
In particular, we have that 
\begin{equation}
    (\partial_\lambda Y_f^{-} - \partial_\lambda Y_f^{+})(0,0) =  \frac{2}{\sqrt{1-v^2}} \ u_3(0). 
    \label{der_fast}
\end{equation}
\begin{remark}
    Also the homogeneous solution $u_1(x)$ could be added to $\partial_{\lambda}Y_f^{\pm}(0,x)$, without altering the decay at infinity. But, as we will see, adding any such term will not affect the final computation of $\partial^k_{\lambda}E(\lambda)$, for $k \in \mathbf{N}$.
\end{remark}
In order to obtain  $\partial^2_{\lambda}(Y_f^{+}-Y_f^{-})(0,0)$, we differentiate \eqref{first_order_system} twice and obtain
\begin{equation}
    (\partial_\lambda^2Y_f^{\pm})'(0,x) = M(0,x)\partial_\lambda^2Y_f^{\pm}(0,x) + 2\partial_\lambda M(0,x) \partial_\lambda Y_f^{\pm}(0,x).
\end{equation}
This time the non-homogeneous term reads 
\begin{equation}
    h^{\pm}(x):=2\partial_\lambda M(0,x) \partial_\lambda Y_f^{\pm}(0,x) = [0, \ 0, \ -4 \pm \frac{4}{\sqrt{1-v^2}}\Re\phi_0, \ 4\partial_v\Re\phi_0 \pm \frac{4v}{\sqrt{1-v^2}}]^T. 
\end{equation}
Using the variation of parameters, we write $\partial_\lambda^2Y_f^{\pm}(0,x)  = \sum_{i=1}^4c^{\pm}_i(x)u_i(x)$.  For our purposes, it is sufficient to compute only the coefficients $c_2^{\pm}(0)$. As we will see, the other coefficients do not contribute to the final computation. By Cramer's rule, we have $(c_2^{\pm})'(x) =\det(u_1(x),h^{\pm}(x), u_3(x),u_4(x))$. We obtain, after some computations,
\begin{equation}
    (c_2^{\pm})'(x) = -(1-v^2)\phi'_0(-4 \pm \frac{4}{\sqrt{1-v^2}}\Re\phi_0).
\end{equation}
Integrating the expressions above, and requiring $c_2^{\pm}(\pm\infty) =0$, we obtain
\begin{equation}
    [c_2^-(0) - c_2^+(0)] = 4(1-v^2)\sqrt{1-v^2}
\end{equation}
We conclude that
\begin{equation}
    \partial_\lambda^2(Y_f^{-}-Y_f^{+})(0,0) = \sum_{j = 1, 3, 4}\tilde{c}_ju_j(0) +  4(1-v^2)\sqrt{1-v^2} \ u_2(0)
    \label{eq: der_2_fast}
\end{equation}
for some constants $\tilde{c}_j$, with $j=1,3,4$.\\
The next subsections are devoted to computing the derivatives of the slow modes. Due to the fact that the eigenvalues $\gamma_{2,3}(\lambda)$, which determine the decay rate of the slow modes, have zero real part for $\lambda \in i\mathbf{R}$, we have to follow a different approach than the one we used for the fast modes. In particular, we cannot require $\partial_{\lambda}Y_{s}^{\pm}(0,x) \to 0$ as $x \to \pm \infty$. The approach we follow is based on \cite[Section 3.2]{kapitula_dark}. In order to introduce it, we first define the adjoint problem to \eqref{first_order_system}.
\subsubsection{The adjoint problem at $\lambda=0$} 
The adjoint problem relative to \eqref{first_order_system} is defined as the ODE system  
    \begin{equation}
        Z'(\lambda,x) = -M^*(\lambda,x)Z(\lambda,x),
        \label{eq: adjoint_problem}
    \end{equation}
where $M^*(\lambda,x)$ is the transpose conjugate of $M(\lambda,x)$. \\
As the problem in \eqref{first_order_system} is related to the operator $\mathcal{L}_0$ in \eqref{matrix_operator_L} (it is the first order system associated to the eigenvalue problem for $\mathcal{L}_0$), the adjoint problem in \eqref{eq: adjoint_problem} is related to the adjoint operator $\mathcal{L}_0^*$ as follows. Define the adjoint operator $\mathcal{L}_0^*$ as
\begin{equation}
    \mathcal{L}_0^* := B^T\partial_x^2 - P^T\partial_x + N^T(x),
\end{equation}
where the matrices $B,P,N(x)$ are defined in \eqref{eq: B and P} and \eqref{eq: N(x)}. Then the following proposition holds. 
\begin{proposition}
    Let $Z_2$ be a solution of $(\mathcal{L}_0^*-\lambda^*)(B^{-T}Z_2)=0$, for $\lambda^* \in \mathbf{C}$ and $B^{-T} = (B^T)^{-1}$. Define $Z_1 := -Z_2'+P^TB^{-T}Z_2$. Then, $Z:=(Z_1,Z_2)$ is a solution of the adjoint equation $Z'(\lambda,x) = -M(\lambda,x)^*Z(\lambda,x)$.
    \label{proposition: adjoint_solutions}
\end{proposition}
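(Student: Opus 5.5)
The plan is a direct computation. We write the first order matrix $M(\lambda,x)$ in block form, compute its conjugate transpose, and check that the two block rows of the adjoint system reduce to the definition of $Z_1$ and to the equation $(\mathcal{L}_0^*-\lambda^*)(B^{-T}Z_2)=0$. Throughout, $\lambda^*$ denotes $\overline{\lambda}$.

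\textbf{Step 1: block form of $M$.} Since $\mathcal{L}_0 = B\partial_x^2+P\partial_x+N(x)$ with $B$ invertible and $B,P$ constant, the eigenvalue problem $\mathcal{L}_0 Y=\lambda Y$ for $Y=[u,w]^T$ is equivalent to $Y''=B^{-1}(\lambda-N)Y - B^{-1}PY'$. Hence
\begin{equation*}
M(\lambda,x)=\begin{pmatrix} 0 & I\\ B^{-1}(\lambda-N(x)) & -B^{-1}P\end{pmatrix}.
\end{equation*}
We check this against \eqref{matrix_M}. Here $B^{-1}=\begin{pmatrix}0&-2\\2&0\end{pmatrix}$ and $-B^{-1}P=vB^{-1}=\begin{pmatrix}0&-2v\\2v&0\end{pmatrix}$, which matches the lower right block. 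Computing $B^{-1}(\lambda-N)$ likewise reproduces the lower left block, including the $\mp2\lambda$ entries.

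\textbf{Step 2: the adjoint system.} Since $B$, $P$ and $N(x)$ are real, we get
\begin{equation*}
M^*(\lambda,x)=\begin{pmatrix} 0 & (\lambda^*-N^T)B^{-T}\\ I & -P^TB^{-T}\end{pmatrix}.
\end{equation*}
Writing $Z=(Z_1,Z_2)$, the system $Z'=-M^*Z$ splits into two equations:
\begin{equation*}
Z_1' = -(\lambda^*-N^T)B^{-T}Z_2, \qquad Z_2' = -Z_1 + P^TB^{-T}Z_2 .
\end{equation*}
The second equation is exactly the definition $Z_1:=-Z_2'+P^TB^{-T}Z_2$, so it holds automatically.

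\textbf{Step 3: the first block row.} Set $W:=B^{-T}Z_2$, so that $Z_2=B^TW$. Differentiating the definition of $Z_1$ and using that $P$ and $B$ are constant gives $Z_1'=-B^TW''+P^TW'$. The first equation is therefore equivalent to
\begin{equation*}
-B^TW''+P^TW' = -(\lambda^*-N^T)W .
\end{equation*}
Rearranging, this reads $B^TW''-P^TW'+N^TW-\lambda^*W=0$, that is $(\mathcal{L}_0^*-\lambda^*)W=0$, which is the hypothesis.

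There is no real obstacle. The only care needed is in Step 1, confirming the block form against the explicit matrix \eqref{matrix_M}, in particular the signs in $B^{-1}$ and in the $\lambda$ entries, and in Step 2, keeping track of the conjugation so that $\lambda^*$ appears rather than $\lambda$.
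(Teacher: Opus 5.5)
Your proof is correct and follows the same direct computation as the paper. Both arguments split $Z'=-M^*Z$ into two block rows: the second row is the definition of $Z_1$, and the first row, after substituting $B^{-T}Z_2$, is equivalent to $(\mathcal{L}_0^*-\lambda^*)(B^{-T}Z_2)=0$. Your explicit check of the block form of $M$ against \eqref{matrix_M} is a step the paper leaves implicit, and it is a useful addition.
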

\begin{proof}
    Let's denote $\tilde{Z}_2= B^{-T}Z_2$. Since $(\mathcal{L}_0^*-\lambda^*)\tilde{Z}_2=0$, we have
    \begin{equation}
        B^T\tilde{Z}''_2 = -(N^T-\lambda^*I)\tilde{Z}_2 + P^T\tilde{Z}'_2.
        \label{Z_adjoint}
    \end{equation}
    Define $Z_1 = -Z_2'+P^TB^{-T}Z_2$, so that 
    \begin{equation}
        Z_1' = -B^T\tilde{Z}''_2+P^T \tilde{Z}'_2.
    \end{equation}
    Using \eqref{Z_adjoint}, this means that 
    \begin{equation}
        Z_1' = (N^T-\lambda^*I)B^{-T}Z_2. 
    \end{equation}
    Altogether we have
    \begin{equation}
        \begin{cases}
             Z_1' &= (N^T-\lambda^*I)B^{-T}Z_2 \\
             Z_2' &= -Z_1 + P^TB^{-T}Z_2
         \end{cases}
    \end{equation}
    which means $Z = (Z_1,Z_2)$ solves $Z' = -M^*(\lambda,x)Z$, where
    \begin{equation}
    M^*(\lambda,x) =  \begin{pmatrix}
        0 & -(N^T(x)-\lambda^* I)B^{-T} \\
        I & -P^TB^{-T}
    \end{pmatrix}.
\end{equation}
\end{proof}
Here we are interested in determining the fundamental solutions to \eqref{eq: adjoint_problem} at $\lambda =0$. These will be denoted as $u_i^A(x)$, for $i=1,...,4$, and, up to reordering and normalization, they satisfy (see \cite[Proposition 2.5]{kapitula_melnikov})
\begin{equation}
    u_i^A(x) \cdot u_j(x) = \delta_{ij} \qquad \forall x \in \mathbf{R}.
    \label{eq: orthogonal_u_i}
\end{equation}
In particular, we are interested in computing $u_2^A(x)$ and $u_4^A(x)$. Thanks to Proposition \ref{proposition: adjoint_solutions}, we can construct solutions to the adjoint problem \eqref{eq: adjoint_problem} at $\lambda =0$, by solving $\mathcal{L}_0^*(B^{-T}Z_2)=0$. By explicit computation, one can verify the identity
\begin{equation}
    \mathcal{L}_0^*(B^{-T}Z_2) = 2HZ_2,
\end{equation}
where $H$ is the linearized Hamiltonian defined in \eqref{def_H}. By means of the gauge invariance, we have that $Z_2= [-v,\Re \phi_0]$ satisfies $HZ_2=0$. Thus, we set
\begin{equation}
    Z_1 = -Z_2'+ P^TB^{-T}Z_2 = \begin{pmatrix}
        0 \\ -\phi_0'
    \end{pmatrix}+ \begin{pmatrix}
        0 & -2v \\
        2v & 0
    \end{pmatrix} \begin{pmatrix}
        -v \\
        \Re\phi_0
    \end{pmatrix} = \begin{pmatrix}
        -2v \Re\phi_0 \\
        -2v^2-\phi_0'
    \end{pmatrix},
\end{equation}
and we define the adjoint solution $ u_4^A(x)$ as
\begin{equation}
    u_4^A(x) :=  \frac{1}{1-v^2} [-2v \Re\phi_0, \ -2v^2-\phi_0', \ -v, \ \Re\phi_0 ]^T.
    \label{u_4_adj}
\end{equation}
On the other hand, using the invariance under translations,  if we take $Z_2 =  [\phi_0', 0]^T$, and we set $Z_1 = [ -\phi_0'',\ 2v\phi_0']^T$, we can define the adjoint solution $ u_4^2(x)$ to be
\begin{equation}
    u_2^A(x) := (1-v^2)[\phi_0'', -\ 2v\phi_0', \ -\phi_0', \ 0 ]^T.
    \label{u_2_adj}
\end{equation}
One can explicitly verify that $u_4^A(x)\cdot u_4(x) =1$. For $u_2^A(x)$, instead, one notices the identity
\begin{equation}
    u_2^A(0)\cdot u_2(0) = u_1(0) \wedge u_2(0) \wedge u_3(0) \wedge u_4(0) =1. 
\end{equation}

\begin{remark}
Relation \eqref{eq: orthogonal_u_i} and the fact that $u_2(x)$ diverges exponentially fast both at plus and minus infinity, implies that $u_2^A(x)$ decays to zero exponentially fast as $|x| \to \infty$.
\end{remark} 
\subsubsection{Derivative of slow modes} In this section we compute the derivatives $\partial_{\lambda}Y_s^{\pm}(\lambda,x)$ at $\lambda =0$ and $x=0$. Following \cite{kapitula_dark} and \cite{kapitula&sandstede}, we introduce the functions
\begin{equation}
    Z_s^+(\lambda,x) := Y_s^+(\lambda,x)e^{-\gamma_3(\lambda)x} \qquad \text{and} \qquad  Z_s^-(\lambda,x) := Y_s^-(\lambda,x)e^{-\gamma_2(\lambda)x}.
\end{equation}
We note that they satisfy $Z_s^+(\lambda,x) \to v_3^+(\lambda)$ as $x \to + \infty$ and $Z_s^-(\lambda,x) \to v_2^-(\lambda)$ as $x \to - \infty$. In particular, the functions $Z_s^{\pm}(\lambda,x)$ have limits at $\pm \infty$ which are differentiable functions of $\lambda$.\\
In the following we  abbreviate $Z_s^{\pm}(\lambda,x) = Y^{\pm}_s(\lambda,x)e^{-\gamma_{3,2}(\lambda)x}$. The equation satisfied by $Z^{\pm}_s(\lambda,x)$ is
\begin{equation}
    \partial_xZ_s^{\pm}(\lambda,x) = (M(\lambda,x)-\gamma_{3,2}(\lambda))Z_s^{\pm}(\lambda,x).
\end{equation}
We then introduce the following decomposition
\begin{equation}
    Z^{\pm}_s(\lambda,x) = v_{3,2}^{\pm}(\lambda) + Y^{\pm}_s(0,x) -v_{3,2}^{\pm}(0) + w^{\pm}(\lambda,x),
    \label{eq: ansatz_Z}
\end{equation}
where $w^{\pm}(\lambda,x)$ is assumed to decay exponentially fast as $x \to \pm \infty$ and to satisfy $w^{\pm}(0,x) = 0$. Our goal now is to compute $\partial_{\lambda}w^{\pm}(0,0)$, and to obtain $\partial_{\lambda}Y_s^{\pm}(0,0)$ from it.\\
By direct differentiation, one obtains
\begin{equation}
    \partial_{x}(\partial_{\lambda} w^{\pm}(\lambda,x))|_{\lambda = 0} = (M(0,x)- \gamma_{3,2}(0))\partial_{\lambda}Z_s^{\pm}(0,x) +\partial_{\lambda}M(0,x)  Z_s^{\pm}(0,x) - \frac{\partial\gamma_{3,2}}{\partial\lambda}(0)Z_s^{\pm}(0,x). 
\end{equation}
Using the identities $\partial_{\lambda}Z_s^{\pm}(0,x) = \partial_{\lambda}v_{3,2}^{\pm}(0) + \partial_{\lambda}w^{\pm}(0,x)$, with $\gamma_{2,3}(0)= 0$, and $Z_s^{\pm}(0,x) = Y_s^{\pm}(x)$, we obtain the inhomogeneous equation
\begin{equation}
    \partial_{x}(\partial_{\lambda} w^{\pm}(\lambda,x))|_{\lambda = 0} = M(0,x)\partial_{\lambda}w^{\pm}(0,x) + G_{\pm}(x),
    \label{eq: inhom_w}
\end{equation}
where
\begin{equation}
    G_{\pm}(x) = M(0,x) \partial_{\lambda}v_{3,2}^{\pm}(0) +\partial_{\lambda}M(0,x)  Y_s^{\pm}(0,x) - \frac{\partial\gamma_{3,2}}{\partial\lambda}(0)Y_s^{\pm}(0,x).
\end{equation}
The inhomogeneous term satisfies $G_{\pm}(x) \to 0$ exponentially fast as $x \to \pm \infty$, as can be seen by differentiating the relation $M_{\pm}(\lambda)v_{3,2}^{\pm}(\lambda) = \gamma_{3,2}v_{3,2}^{\pm}(\lambda)$ in $\lambda =0$. We try to solve \eqref{eq: inhom_w} by variation of parameters. Using the fundamental  solutions $u_i(x)$ of the homogeneous problem, and the adjoint solutions $u_i^A(x)$, for $i=1,...,4$, we obtain
\begin{equation}
    \partial_{\lambda}w^{\pm}(0,0) = c_1^{\pm}(0)u_1(0) + c_3^{\pm}(0)u_3(0) + u_2(0) \int_{\pm \infty}^{0}G_{\pm}(x)\cdot u_2^A(x) dx + u_4(0) \int_{\pm \infty}^{0}G_{\pm}(x)\cdot u_4^A(x) dx,
\end{equation}
where we required $\partial_{\lambda}w^{\pm}(0,x)$ to decay to zero as $x \to \pm \infty$. Thanks to the exponential decay of $G_{\pm}(x)$ and of the adjoint solutions $u_{2,4}^A$ at infinity, all the improper integrals above are well defined. Our goal now is to compute explicitly these two integrals (the coefficients $c_1^{\pm}(0)$ and $c_3^{\pm}(0)$ will not be needed). First, we notice that the product $Y_s^{\pm}(0,x)\cdot u_{2,4}^A(x) =0$, since $Y_s^{\pm}(0,x) = u_3(x)$. Thus, the last term in $G_{\pm}(x)$ does not contribute to the integrals. Next, by explicit computation, we obtain that the second term in $G_{\pm}$ contributes with
\begin{equation}
    \int_{\pm\infty}^0 \partial_\lambda M(0,x) Y_s^{\pm}(0,x) \cdot u_2^A(x) dx = -(1-v^2)^2 \qquad \text{and} \qquad  \int_{\pm\infty}^0 \partial_\lambda M(0,x) Y_s^{\pm}(0,x) \cdot u_2^A(x) dx = 0.
\end{equation}
To compute the contribution of the first term in $G_{\pm}(x)$ we use the identity
\begin{equation}
    (M(0,x)\partial_{\lambda}v_{3,2}^{\pm}(0)) \cdot u_i^A(x) =  - \partial_{\lambda}v_{3,2}^{\pm}(0) \cdot \frac{d}{dx} u_i^A(x),
\end{equation}
where we used the fact that $M(0,x)$ is real. We obtain
\begin{equation}
    \int_{\pm\infty}^0  M(0,x) \partial_{\lambda}v_{3,2}^{\pm}(0) \cdot u_2^A(x) dx = - \partial_{\lambda}v_{3,2}^{\pm}(0) \cdot u_2^A(0),
\end{equation}
since $u_2^A(\pm\infty) =0$, and
\begin{equation}
     \int_{\pm\infty}^0  M(0,x) \partial_{\lambda}v_{3,2}^{\pm}(0) \cdot u_4^A(x) dx = \partial_{\lambda}v_{3,2}^{\pm}(0) \cdot (u_4^A(\pm\infty) - u_4^A(0)  ). 
\end{equation}
We conclude that, for some constants $\tilde{c}^{\pm}_{1},\tilde{c}^{\pm}_3$, we have
\begin{equation}
    \begin{split}
        \partial_{\lambda}w^{\pm}(0,0) &= \tilde{c}^{\pm}_1 u_1(0) + \tilde{c}^{\pm}_3 u_3(0) + \bigl[-(1-v^2)^2 -\partial_{\lambda}v^{\pm}_{3,2}(0)\cdot u_2^A(0)\bigr] \ u_2(0)\\ & + [\partial_{\lambda}v^{\pm}_{3,2}(0) \cdot u_4^A(\pm\infty) \bigr] \ u_4(0) -\bigl[\partial_{\lambda}v^{\pm}_{3,2}(0)\cdot u_4^A(0)\bigr] \  u_4(0).
    \end{split}
    \label{eq: deriv_w}
\end{equation}
We can now prove the following lemma.
\begin{lemma}
    The slow solutions $Y_s^{\pm}(\lambda,x)$ to \eqref{first_order_system} satisfy
    \begin{equation}
        \partial_{\lambda}Y_s^{-} (0,0) = c_1^-   u_1(0) + c_3^-   u_3(0) - (1-v^2)^2 \  u_2(0) + \Bigl(\partial_{\lambda}v_{2}^{-}(0)\cdot u_4^A(-\infty) \Bigr) \ u_4(0)
    \end{equation}
    and 
     \begin{equation}
        \partial_{\lambda}Y_s^{+} (0,0) = c_1^+   u_1(0) + c_3^+   u_3(0) - (1-v^2)^2 \ u_2(0) + \Bigl(\partial_{\lambda}v_{3}^{+}(0) \cdot u_4^A(+\infty) \Bigr) \ u_4(0),
    \end{equation}
    for some constants $c_{1,2}^{\pm}$. In particular, it holds
    \begin{equation}
       \begin{split}
            \partial_{\lambda}(Y_s^{-} - Y_s^{+})(0,0)  & = c_1  u_1(0) + c_3  u_3(0) + [\partial_{\lambda}v^-_2(0) \cdot u_4^A(-\infty) - \partial_{\lambda}v^+_3(0) \cdot u_4^A(+\infty)] \ u_4(0) 
       \end{split}
       \label{eq: deriv_slow}
    \end{equation}
    for some new constants $c_{1,3}$. 
    \label{lemma: slow}
\end{lemma}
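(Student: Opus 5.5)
The plan is to differentiate the decomposition \eqref{eq: ansatz_Z} in $\lambda$ at $\lambda=0$ and $x=0$, and then read off the coefficients from \eqref{eq: deriv_w}.

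Since $Y_s^{\pm}(\lambda,x)=Z_s^{\pm}(\lambda,x)e^{\gamma_{3,2}(\lambda)x}$, at $x=0$ we have $Y_s^{\pm}(\lambda,0)=Z_s^{\pm}(\lambda,0)$, so no derivative of the exponential factor appears. Differentiating \eqref{eq: ansatz_Z} then gives
\begin{equation*}
\partial_\lambda Y_s^{\pm}(0,0)=\partial_\lambda v_{3,2}^{\pm}(0)+\partial_\lambda w^{\pm}(0,0).
\end{equation*}
The key step is to expand the constant vector $\partial_\lambda v_{3,2}^{\pm}(0)$ in the basis $u_1(0),\dots,u_4(0)$, using the biorthogonality \eqref{eq: orthogonal_u_i}:
\begin{equation*}
\partial_\lambda v_{3,2}^{\pm}(0)=\sum_{i=1}^4\bigl(\partial_\lambda v_{3,2}^{\pm}(0)\cdot u_i^A(0)\bigr)u_i(0).
\end{equation*}
Adding this expansion to \eqref{eq: deriv_w} produces two cancellations. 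The $u_2(0)$ coefficient $-\partial_\lambda v^{\pm}_{3,2}(0)\cdot u_2^A(0)$ cancels against $+\partial_\lambda v^{\pm}_{3,2}(0)\cdot u_2^A(0)$, leaving $-(1-v^2)^2$. Likewise the $u_4(0)$ term $-\partial_\lambda v^{\pm}_{3,2}(0)\cdot u_4^A(0)$ cancels, leaving $\partial_\lambda v^{\pm}_{3,2}(0)\cdot u_4^A(\pm\infty)$. The $u_1(0)$ and $u_3(0)$ contributions are absorbed into the constants $c_1^{\pm}$ and $c_3^{\pm}$. This gives the two displayed formulas, with $v_2^-$ for the minus sign and $v_3^+$ for the plus sign.

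Subtracting the two formulas, the $-(1-v^2)^2u_2(0)$ terms are identical and cancel. What remains is \eqref{eq: deriv_slow}, with $c_j=c_j^--c_j^+$.

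The main obstacle is a check I would make first: that the $u_2$-coefficient $-(1-v^2)^2$ coming from $\int_{\pm\infty}^0\partial_\lambda M\,Y_s^{\pm}\cdot u_2^A\,dx$ really is the same for both signs. The integrand $\partial_\lambda M(0,x)u_3\cdot u_2^A$ should be odd in $x$, since $\phi_0'$ is even and $\Re\phi_0$ is odd. Then the integrals from $-\infty$ and from $+\infty$ are equal, and the $u_2$ terms cancel in the difference. In particular $u_4^A(\pm\infty)$ is finite, because $\Re\phi_0\to\pm\sqrt{1-v^2}$ and $\phi_0'\to0$. So the $u_4$ coefficient is a well-defined constant, to be computed later from the explicit vectors \eqref{eq: v_2}--\eqref{eq: v_3} and \eqref{u_4_adj}.
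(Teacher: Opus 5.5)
Your proposal is correct and is essentially the paper's proof: set $x=0$ in \eqref{eq: ansatz_Z}, expand $\partial_\lambda v_{3,2}^{\pm}(0)$ in the basis $u_i(0)$ by biorthogonality, and cancel the $u_2^A(0)$ and $u_4^A(0)$ terms against \eqref{eq: deriv_w}. Your symmetry check also agrees with the paper's explicit value, since the integrand is $2(1-v^2)\Re\phi_0\,\phi_0'=(1-v^2)\frac{d}{dx}(\Re\phi_0)^2$, which gives $-(1-v^2)^2$ when integrated from either $-\infty$ or $+\infty$ to $0$.
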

\begin{proof}
    From the decomposition \eqref{eq: ansatz_Z}, and the identity $Z^{\pm}_s(\lambda,0) = Y^{\pm}_s(\lambda,0)$, we have that
    \begin{equation}
        \partial_{\lambda}Y^-_s(0,0) = \partial_{\lambda}v_2^-(0) + \partial_\lambda w^-(0,0).
    \end{equation}
    and
     \begin{equation}
        \partial_{\lambda}Y^+_s(0,0) = \partial_{\lambda}v_3^+(0) + \partial_\lambda w^+(0,0).
    \end{equation}
    Then, we observe that
    \begin{equation}
        \partial_\lambda v_2^-(0) = \sum_{i = 1}^4[ \partial_\lambda v_2^-(0) \cdot u_i^A(0)] u_i(0) \qquad \text{and} \qquad \partial_\lambda v_3^+(0) = \sum_{i = 1}^4[ \partial_\lambda v_3^+(0) \cdot u_i^A(0)] u_i(0).
    \end{equation} 
    By using \eqref{eq: deriv_w}, we obtain the desired result.
\end{proof}
In order to compute the explicit expression of the coefficient in front of $u_4(0)$ in \eqref{eq: deriv_slow}, we need to compute $\partial_{\lambda} v^-_2(0)$ and $\partial_{\lambda} v^+_3(0)$. From the characteristic polynomial in \eqref{ch_pol}, we obtain the expansions
\begin{equation}
    \gamma_2(\lambda) = \frac{1}{1-v}\lambda + O(\lambda^2) \qquad \text{and} \qquad \gamma_3(\lambda) = -\frac{1}{1+v}\lambda + O(\lambda^2).
\end{equation}
Recalling the expression for the eigenvalues $v^-_2(\lambda)$ and $v^+_3(\lambda)$ in \eqref{eq: v_2} and \eqref{eq: v_3}, we obtain
\begin{equation}
    \partial_\lambda v_2^-(0) = [0, \ \frac{1}{2v(1-v)}, \ \frac{-v}{1-v}, \ - \frac{\sqrt{1-v^2}}{1-v}]^T,
\end{equation}
and 
\begin{equation}
        \partial_\lambda v_3^+(0) = [0, \ \frac{1}{2v(1+v)}, \ \frac{v}{1+v}, \ -\frac{\sqrt{1-v^2}}{1+v} ]^T.
    \end{equation}
    Deducing the explicit expression of $u_4^A(\pm\infty)$ from \eqref{u_4_adj}, we conclude
    \begin{equation}
        \partial_{\lambda}(Y_s^{-} - Y_s^{+})(0,0)   = c_1  u_1(0) + c_3 u_3(0) +  \frac{2}{(1-v^2)} \ u_4(0).
        \label{slow_modes}
    \end{equation}
\subsubsection{First order expansion of $E(\lambda)$} In this section we compute the first non-zero coefficient in the expansion of $E(\lambda)$ at $\lambda=0$. We already know that $E(0)=0$. Using the Leibniz rule with respect to the product "$\wedge$", the antisymmetric property of the Wronskian under exchange of columns, and the identity $Y_{f}^{+}(0,x) =Y_{f}^{-}(0,x)$ and the analogous one for $Y_s^{\pm}(0,x)$, the first derivative of the Evans function at $\lambda=0$ can be written as
\begin{equation}
    \partial_{\lambda}E(\lambda) = (\partial_{\lambda}(Y_s^--Y_s^+) \wedge Y_f^-\wedge Y_s^+\wedge Y_f^+)(0,0)+ (Y_s^- \wedge \partial_{\lambda}(Y_f^--Y_f^+) \wedge Y_s^+\wedge Y_f^+)(0,0).
\end{equation}
Since $Y_f^{+}(0,x) = Y_f^-(0,x)$ and $Y_s^+(0,x) = Y_s^-(0,x)$, we have $\partial_{\lambda}E(0)=0$. In computing the second derivative $\partial^2_{\lambda}E(0)$, one encounters terms of the form\footnote{One encounters also terms involving $\partial^2_{\lambda}Y_{s}^{\pm}$ or $\partial^2_{\lambda}Y_{f}^{\pm}$, but these equal zero.}
\begin{equation}
    (\partial_{\lambda}(Y_s^--Y_s^+)\wedge \partial_{\lambda}(Y_f^- - Y_f^+) \wedge Y_s^+ \wedge Y_f^+)(0,0).
\end{equation}
Using the identity $Y_s^{\pm}(0,x) = u_3(x)$, and the fact that $\partial_{\lambda}(Y_f^- - Y_f^+)(0,x)$ is proportional to $u_3(x)$ by \eqref{der_fast}, we obtain $\partial_{\lambda}^2E(0)=0$.\\
In order to compute $\partial_{\lambda}^3E(0)$, we can proceed as follows. We begin by considering the case in which only one derivative acts on the slow modes, and two derivatives act one the fast modes. The first possibility is that each fast mode has one derivative, i.e.
\begin{equation}
    (\partial_{\lambda}(Y_s^--Y_s^+) \wedge \partial_{\lambda}Y_f^-\wedge Y_s^+ \wedge \partial_{\lambda}Y_f^+)(0,0).
\end{equation}
Terms of this kind are always zero, since $\partial_{\lambda}Y_f^-(0,0)$ and $ \partial_{\lambda}Y_f^+(0,0)$ only differ by a multiple of $u_3(x)$, by \eqref{der_fast}. The second possibility is that two derivatives act on the same fast mode. Terms of this kind enter with multiplicity three, and their total contribution reads
\begin{equation}
     3(\partial_\lambda (Y_s^--Y_s^+) \wedge \partial_\lambda^2 (Y_f^--Y_f^+) \wedge Y_s^+ \wedge Y_f^+) (0,0).
\end{equation}
Using  \eqref{slow_modes} and \eqref{eq: der_2_fast}, the expression above equals $-24\sqrt{1-v^2}$.\\
We can consider now the case in which two derivatives act one the slow modes, and only one on the fast modes. If the two derivatives act on the same slow mode, we have a term of the form
\begin{equation}
    (\partial_{\lambda}^2(Y_s^--Y_s^+) \wedge \partial_{\lambda}(Y_f^--Y_f^+)\wedge Y_s^+ \wedge Y_f^+)(0,0).
\end{equation}
This term gives no contribution, since $\partial_{\lambda}(Y_f^--Y_f^+)(0,0)$ is proportional to $u_3(x)$, by \eqref{der_fast}. Finally, we consider the case in which each slow mode has one derivative. Terms of this kind enter with multiplicity six. Their total contribution reads
\begin{equation}
    6(\partial_{\lambda}Y_s^{+} \wedge 
    \partial_{\lambda}(Y_f^{-}-Y_f^+)\wedge \partial_{\lambda}Y_s^+\wedge Y_f^+)(0,0).
    \label{eq: comput_der}
\end{equation}
Using \eqref{slow_modes} we write $\partial_{\lambda}Y_s^-(0,0) = c_1 u_1(0) + c_3 u_3(0) + \frac{2}{(1-v^2)}u_4(0) + \partial_\lambda Y_s^+(0,0)$. Then, using \eqref{der_fast} for $\partial_{\lambda}(Y_f^{-}-Y_f^+)$, we can write \eqref{eq: comput_der} as
\begin{equation}
    6(\frac{2}{1-v^2} \ u_4(0) \wedge \frac{2}{\sqrt{1-v^2}} \ u_3(0) \wedge \partial_\lambda Y_s^+(0,0) \wedge u_1(0)).
\end{equation}
Using the expression of $\partial_{\lambda}Y_s^+(0,0)$ from Lemma \ref{lemma: slow}, we obtain that \eqref{eq: comput_der} equals $-24\sqrt{1-v^2}$.\\
Summing all the above contributions we have that $\partial_{\lambda}^3E(0) = -48\sqrt{1-v^2}$.\\ We conclude with the following proposition (recall the identity $P'_r(v) = 4\sqrt{1-v^2}$, where $P_r(v)$ represents the derivative of the renormalized momentum of $\phi_{0,v}$ in Remark \ref{remark: ren_mom}).
\begin{proposition}
    For $\lambda$ near zero the Evans function has the expansion
    \begin{equation}
        E(\lambda) = -2P'_r(v)\lambda^3 + O(\lambda^4). 
    \end{equation}
    \label{prop: expansion}
\end{proposition}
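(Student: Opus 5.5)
The expansion will follow from Taylor's theorem applied to the analytic Evans function, using the derivative computations already carried out in this section. By Lemma \ref{lemma: cont_evans}, $E(\lambda)$ extends analytically to a full neighborhood of $\lambda=0$. Hence it has a convergent Taylor series
\begin{equation*}
E(\lambda)=\sum_{k\ge 0}\frac{\partial_\lambda^kE(0)}{k!}\lambda^k .
\end{equation*}
The proof therefore reduces to two things: showing that $E(0)=\partial_\lambda E(0)=\partial_\lambda^2E(0)=0$, and identifying $\partial_\lambda^3E(0)$. The coefficient of $\lambda^3$ will then be $\partial_\lambda^3E(0)/6$, and everything of higher order is absorbed into $O(\lambda^4)$.

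First, $E(0)=0$, because $Y_s^\pm(0,\cdot)=u_3$ and $Y_f^\pm(0,\cdot)=u_1$ coincide. Second, expand $\partial_\lambda E(0)$ and $\partial^2_\lambda E(0)$ by the Leibniz rule for the wedge product. After subtracting columns, every surviving term contains either two equal columns or the combination $\partial_\lambda(Y_f^--Y_f^+)(0,0)$. That combination is proportional to $u_3(0)=Y_s^+(0,0)$ by \eqref{der_fast}, so all these terms vanish.

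For the third derivative, organize the terms by how the three derivatives are distributed between slow and fast modes.
\begin{itemize}
\item Terms with one derivative on each fast mode vanish, again by \eqref{der_fast}.
\item Terms with two derivatives on one slow mode and one on the fast modes vanish for the same reason.
\item The term with two derivatives on the same fast mode carries multiplicity $3$. It is evaluated with \eqref{slow_modes} and \eqref{eq: der_2_fast}: only the $u_4(0)$ component of the slow difference and the $u_2(0)$ component of the fast difference survive. Using $u_1\wedge u_2\wedge u_3\wedge u_4=1$, its value is $3\cdot\frac{2}{1-v^2}\cdot 4(1-v^2)^{3/2}$, with the sign fixed by the reordering of columns, which gives $-24\sqrt{1-v^2}$.
\item The term with one derivative on each slow mode carries multiplicity $6$. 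Write $\partial_\lambda Y_s^-=\partial_\lambda Y_s^+ + (\text{slow difference})$ and use \eqref{der_fast}. Only the $u_2(0)$ component $-(1-v^2)^2$ of $\partial_\lambda Y_s^+(0,0)$ from Lemma \ref{lemma: slow} survives. This again gives $-24\sqrt{1-v^2}$.
\end{itemize}
Summing, $\partial_\lambda^3E(0)=-48\sqrt{1-v^2}$, so the coefficient of $\lambda^3$ is $-8\sqrt{1-v^2}=-2P_r'(v)$, using $P_r'(v)=4\sqrt{1-v^2}$ from Remark \ref{remark: ren_mom}.

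The main obstacle is not this bookkeeping but the legitimacy of the slow-mode derivatives. Since $\Re\gamma_{2,3}$ vanish on the essential spectrum, $\partial_\lambda Y_s^\pm$ cannot be required to decay. This is handled by the ansatz \eqref{eq: ansatz_Z} and the adjoint solutions $u_2^A$ and $u_4^A$, which in turn rely on the analyticity at $\lambda=0$ of $\gamma_{2,3}$ and $v_{2,3}^\pm$ (Lemma \ref{lemma: regular_roots}). I would also check carefully the sign conventions in the wedge reorderings, and the normalization $u_2^A(0)\cdot u_2(0)=1$ that fixes the coefficients of $u_2(0)$.
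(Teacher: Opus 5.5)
Your proposal is correct and follows the paper's argument essentially step for step. It uses the analytic continuation from Lemma \ref{lemma: cont_evans}, kills $E(0)$, $\partial_\lambda E(0)$ and $\partial_\lambda^2E(0)$ through the coincidence of modes at $\lambda=0$ together with \eqref{der_fast}, and then applies the same classification of third-order terms, where the multiplicity-$3$ and multiplicity-$6$ contributions each equal $-24\sqrt{1-v^2}$, so that $\partial_\lambda^3E(0)/3!=-8\sqrt{1-v^2}=-2P_r'(v)$.
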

\begin{remark}
    In the limit $v \to 0$, we obtain $E(\lambda) = -8\lambda^2 + O(\lambda^4)$. If we compare this result with the one in \cite[Lemma 3.14]{kapitula_dark}, we observe a difference of a factor $2$. We believe this discrepancy arises from the fact that in \cite{kapitula_dark} the contribution coming from the term in \eqref{eq: comput_der} has been neglected. To support the result in Proposition \ref{prop: expansion}, we refer to \cite[Example 4.9]{pelinovsky}, where the explicit expression of $E(\lambda)$ at $v=0$ is reported, and where the expansion $E(\lambda) = -8\lambda^2 + O(\lambda^4)$ is verified (also numerically).
\end{remark}
\section{The perturbed case}
\label{section: perturbed_case} In this section we study equation \eqref{eq: GP_V_main} in the case $\varepsilon \neq 0$. As we showed in Proposition \ref{prop: persistence_intro}, for each $v \in (-1,1) \backslash \{0\}$, there exists a unique family of gray modes $\phi_{\varepsilon,v}(x-s_{\varepsilon})$, which arises as a regular perturbation of the gray soliton $\phi_{0,v}$, for $\varepsilon$ small enough. For each given $x \in \mathbf{R}$, we know that the map $ \varepsilon \to \phi_{\varepsilon,v}(x-s_{\varepsilon})$ is smooth in a neighborhood of $\varepsilon=0$. In addition, we know the existence of a real-valued smooth function $\varepsilon \to \omega(\varepsilon)$, defined for $\varepsilon$ small enough and satisfying $\omega(0) = v$, such that
\begin{equation}
    \lim_{|x|\to\infty}\Im\phi_{\varepsilon,v}(x) = \omega(\varepsilon)\qquad \text{and} \qquad \lim_{x \to \pm \infty}\Re \phi_{\varepsilon,v}(x) \to \pm \sqrt{1- \omega^2(\varepsilon)},
\end{equation}
where the limits are approached exponentially fast. The goal of this section is to use these properties of the gray modes $\phi_{\varepsilon,v}(x-s_{\varepsilon})$ in order to define and study the Evans function $E(\lambda, \varepsilon)$ associated with the operator $\mathcal{L}_{\varepsilon}$ in \eqref{matrix_operator_L}. \\
We begin by considering the spectral problem in \eqref{spectral_problem} with $\varepsilon\neq 0$, and by writing it as a first order system as
\begin{equation}
    Y'(\lambda,x) = M_{\varepsilon}(\lambda,x)Y(\lambda,x)
    \label{ode_epsilon}
\end{equation}
where
\begin{equation}
    M_\varepsilon(\lambda,x) :=\begin{pmatrix}
        0 & 0 & 1 & 0 \\
        0 & 0 & 0 & 1 \\
        -2(1-|\phi_\varepsilon|^2)+4\Re\phi_\varepsilon^2 + 2\varepsilon V(x) & -2\lambda+4\Re\phi_\varepsilon\Im\phi_\varepsilon & 0 & -2v \\
        +2\lambda+4\Re\phi_\varepsilon\Im\phi_\varepsilon\ & -2(1-|\phi_\varepsilon|^2)+4\Im\phi_\varepsilon^2 + 2\varepsilon V(x) & +2v & 0 \\
    \end{pmatrix}
\end{equation}
As before, we consider the asymptotic matrices $M_{\varepsilon,\pm}(\lambda) := \lim_{x\to\pm\infty} M_\varepsilon(\lambda,x)$, where
\begin{equation}
    M_{\varepsilon,\pm}(\lambda) = \begin{pmatrix}
        0 & 0 & 1 & 0 \\
        0 & 0 & 0 & 1\\
        4(1-\omega^2) & -2\lambda\pm4\omega\sqrt{1-\omega^2} & 0 & -2v\\
        +2\lambda\pm 4\omega\sqrt{1-\omega^2} & 4\omega^2 & +2v & 0\\
    \end{pmatrix}.
\end{equation}
The characteristic polynomial of $M_{\varepsilon,\pm}(\lambda)$ reads  $$\det(M_{\varepsilon,\pm}(\lambda)-\gamma I)= \gamma^4 - 4(1-v^2)\gamma^2 + 8v\lambda \gamma + 4\lambda^2,$$ for every $\omega \in \mathbf{R}$. In particular, the characteristic polynomial associated to $M_{\varepsilon,\pm}(\lambda)$ coincides with the one obtained in \eqref{ch_pol} for the case $\varepsilon=0$. By following the analysis in Section \ref{section: eigenvalues}, in particular by Lemma \ref{lemma: distinct_roots} and Lemma \ref{lemma: regular_roots}, we conclude that the matrices $M_{\varepsilon,\pm}(\lambda)$ admit, in the half-disk $D_r$,  four distinct eigenvalues $\gamma_i(\lambda)$, for $i = 1,...,4$ which are holomorphic functions of $\lambda$. Moreover, $\gamma_1(\lambda)$ and $\gamma_2(\lambda)$ have strictly positive real parts, and $\gamma_3(\lambda)$ and $\gamma_4(\lambda)$ have strictly negative real parts. Finally, the eigenvalues can be continued as holomorphic functions in the vicinity of $\lambda =0$.\\
We now study the behavior of the eigenvectors of $M_{\varepsilon,\pm}(\lambda)$ associated with the eigenvalues $\gamma_i(\lambda)$, for $i=1,...,4$. For simplicity, we keep the same notation as in Section \ref{subsection: eigenvectors}. For example, the eigenvector of $M_{\varepsilon,-}(\lambda)$ associated to $\gamma_1(\lambda)$ will be still denoted by $v_1^-(\lambda)$.\\
The eigenvectors $v_{1}^{\pm}(\lambda)$ and $v_4^{\pm}(\lambda)$, can be chosen to be analytic functions in the vicinity of $\lambda=0$, since the corresponding eigenvalues $\gamma_1(\lambda)$ and $\gamma_4(\lambda)$ do not split from a multiple eigenvalue, i.e. they do not belong to the $0$-group. Moreover, they can be chosen as smooth functions of $\varepsilon$, for $\varepsilon$ small enough.\\ The eigenvectors $v_2^{\pm}(\lambda)$ can be computed by solving the equation $ (M_{\varepsilon,\pm}(\lambda)-\gamma_2(\lambda))v_2^{\pm}(\lambda) = 0$. We obtain
\begin{equation}
     v_2^{\pm}(\lambda) = [-\omega, \ \  \frac{2\lambda \pm 4\omega\sqrt{1-\omega^2}+2v\gamma_2}{\gamma_2^2-4\omega^2} \ (-\omega), \ \  -\omega\gamma_2, \ \  \frac{2\lambda \pm 4\omega\sqrt{1-\omega^2}+2v\gamma_2}{\gamma_2^2-4\omega^2} \ (-\omega) \gamma_2]^T.
\end{equation}
Similarly, we obtain
\begin{equation}
     v_3^{\pm}(\lambda) = [-\omega, \ \  \frac{2\lambda \pm 4\omega\sqrt{1-\omega^2}+2v\gamma_3}{\gamma_3^2-4\omega^2} \ (-\omega), \ \  -\omega\gamma_3, \ \  \frac{2\lambda \pm 4\omega\sqrt{1-\omega^2}+2v\gamma_3}{\gamma_3^2-4\omega^2} \ (-\omega)\gamma_3]^T.
\end{equation}
We conclude that $v_2^{\pm}(\lambda)$ and $v_3^{\pm}(\lambda)$ are also analytic in the vicinity of $\lambda=0$ and smooth in $\varepsilon$, for $\varepsilon$ small enough.\\
We can now construct the Evans function associated to the problem \eqref{ode_epsilon}. We begin with the following Lemma, whose proof is similar to the one of Lemma \ref{lemma: ind_sol}. 
\begin{lemma}
   For any $\lambda \in D_r$ and any $\varepsilon \in (-\varepsilon_0,\varepsilon_0)$ with $\varepsilon_0>0$ small enough, there exist four solutions to \eqref{ode_epsilon}, which we keep denoting $Y_i(\lambda,x)$, with $i=1,...,4$,  such that
    \begin{equation}
        Y_{1,2}(\lambda,x) \to v^-_{1,2}(\lambda) \ e^{\gamma_{1,2}(\lambda)x} \ \ \ \text{as} \ \ x \to -\infty  
        \label{tail_1_bis}
    \end{equation}
    and
    \begin{equation}
        Y_{3,4}(\lambda,x) \to v^+_{3,4}(\lambda) \ e^{\gamma_{3,4}(\lambda)x} \ \ \ \text{as} \ \ x \to +\infty
        \label{tail_2_bis}
    \end{equation}
    \label{lemma: ind_sol_2}
    Here, $v_{1,2}^-(\lambda)$ and $v_{3,4}^{+}(\lambda)$ denote the eigenvectors of $M_{\varepsilon,-}(\lambda)$ and $M_{\varepsilon,+}(\lambda)$, associated with the eigenvalues $\gamma_{1,2}(\lambda)$ and $\gamma_{3,4}(\lambda)$, respectively.
    \label{lemma: ind_sol_epsilon}
\end{lemma}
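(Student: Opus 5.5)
The proof follows the scheme of Lemma \ref{lemma: ind_sol}: for fixed $\varepsilon$ we apply the Coddington--Levinson theorem \cite[Chapter 3, Theorem 8.1]{coddington} to the system \eqref{ode_epsilon}, viewed as an exponentially small perturbation of the constant-coefficient systems $Y' = M_{\varepsilon,\pm}(\lambda)Y$ near $\pm\infty$. We write
\begin{equation*}
M_\varepsilon(\lambda,x) = M_{\varepsilon,\pm}(\lambda) + R_{\varepsilon,\pm}(x).
\end{equation*}
By Proposition \ref{prop: persistence_intro} and the asymptotics of the gray modes, namely $\Im\phi_{\varepsilon}\to\omega(\varepsilon)$, $\Re\phi_\varepsilon\to\pm\sqrt{1-\omega^2(\varepsilon)}$, $|\phi_\varepsilon|\to1$ exponentially fast, together with the exponential decay of $V$, the remainder satisfies $|R_{\varepsilon,\pm}(x)|\le Ce^{-\mu|x|}$ for $\pm x>0$. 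The constants $C,\mu>0$ can be taken uniform for $|\varepsilon|<\varepsilon_0$. The remainder does not depend on $\lambda$, since $\lambda$ enters $M_\varepsilon$ only through constant entries. In particular $R_{\varepsilon,\pm}$ is integrable on the relevant half-line, which is the hypothesis needed for Levinson-type asymptotics.

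The second ingredient is the spectral information on $M_{\varepsilon,\pm}(\lambda)$. The characteristic polynomial of $M_{\varepsilon,\pm}(\lambda)$ is identical to \eqref{ch_pol} for every $\omega$. Hence Lemma \ref{lemma: distinct_roots} applies verbatim: for $\lambda\in D_r$ the four roots $\gamma_i(\lambda)$ are distinct and holomorphic, with $\Re\gamma_{1,2}>0$ and $\Re\gamma_{3,4}<0$. The corresponding eigenvectors $v_i^\pm(\lambda)$ were written above and are well defined; in particular the denominators $\gamma_{2,3}^2-4\omega^2$ stay away from zero for $\lambda$ small and $\varepsilon$ small, since $\omega(\varepsilon)\to v\ne0$. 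So $M_{\varepsilon,\pm}(\lambda)$ is diagonalizable with simple eigenvalues. Levinson's theorem then gives, near $-\infty$, solutions behaving like $v_j^-(\lambda)e^{\gamma_j(\lambda)x}(1+o(1))$ for $j=1,2$, and near $+\infty$, solutions behaving like $v_j^+(\lambda)e^{\gamma_j(\lambda)x}(1+o(1))$ for $j=3,4$. Each of these is extended to all of $\mathbf{R}$ by uniqueness for the linear ODE.

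The main obstacle is the precise form of the tails in \eqref{tail_1_bis}--\eqref{tail_2_bis}, that is, $Y_j e^{-\gamma_jx}\to v_j^\mp$, rather than mere existence. The Levinson dichotomy condition requires that $\Re(\gamma_j-\gamma_k)$ either be bounded below or have an integrable sign, which holds trivially here because the eigenvalues are constant and distinct. One point needs care. For the decaying-at-$+\infty$ slow mode $\gamma_3$, there may be eigenvalues with real part between $\Re\gamma_3$ and $0$, so one should not argue merely with ``the decaying subspace''. Instead we construct $Y_j$ via the integral equation
\begin{equation*}
Z_j(x)=v_j^+ - \sum_{k} \int_x^{\infty} e^{(\gamma_k-\gamma_j)(x-y)}P_k R_{\varepsilon,+}(y)Z_j(y)\,dy ,
\end{equation*}
where the integral runs from $x$ to $\infty$ for those $k$ with $\Re\gamma_k\le \Re\gamma_j$, and from a fixed large $x_0$ to $x$ (with the sign adjusted accordingly) otherwise. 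Here $P_k$ denotes the spectral projections of $M_{\varepsilon,+}(\lambda)$. On $[x_0,\infty)$ this is a contraction by the exponential decay of $R$. It yields $Z_j(x)=Y_j e^{-\gamma_j x}\to v_j^+$, and the fixed point depends holomorphically on $\lambda\in D_r$ and continuously (indeed smoothly) on $\varepsilon$. The construction at $-\infty$ for $j=1,2$ is symmetric.
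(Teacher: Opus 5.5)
Your main line is the same as the paper's. The characteristic polynomial of $M_{\varepsilon,\pm}(\lambda)$ coincides with \eqref{ch_pol}, so Lemma \ref{lemma: distinct_roots} gives four distinct roots in $D_r$ with the $2+2$ sign splitting. The Coddington--Levinson theorem (distinct roots, integrable perturbation) then yields $Y_j e^{-\gamma_j x}\to v_j^{\pm}$ directly. That citation is the whole proof in the paper (via Lemma \ref{lemma: ind_sol}). It already settles the tail form you single out as the main obstacle, so your argument is complete at that point.

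The self-contained integral equation you add, however, is wrong as written: the integration directions for the two groups of indices are swapped.

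\begin{itemize}
\item With $z_k=P_kZ_j$ one has $z_k'=(\gamma_k-\gamma_j)z_k+P_kR_{\varepsilon,+}Z_j$. The kernel $e^{(\gamma_k-\gamma_j)(x-y)}$ is bounded by $1$ only when $\Re(\gamma_k-\gamma_j)(x-y)\le 0$.
\item So one must integrate over $[x,\infty)$ for $\Re\gamma_k\ge\Re\gamma_j$ (including $k=j$), and over $[x_0,x]$ for $\Re\gamma_k<\Re\gamma_j$.
\item With your choice and $j=3$, the terms $k=1,2$ become $\int_{x_0}^{x}e^{(\gamma_k-\gamma_3)(x-y)}P_kR_{\varepsilon,+}(y)Z(y)\,dy$. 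Here $x-y\ge0$ and $\Re(\gamma_k-\gamma_3)>0$, so the map sends bounded functions to exponentially growing ones and is not a contraction on $L^\infty([x_0,\infty))$.
\item The term $k=4$, integrated over $[x,\infty)$, has kernel of size $e^{\Re(\gamma_3-\gamma_4)(y-x)}$ with $\Re(\gamma_3-\gamma_4)\approx 2\sqrt{1-v^2}$. This is essentially the decay rate of the soliton-dependent part of $R_{\varepsilon,+}$ (recall $1-|\phi_0|^2\sim e^{-2\sqrt{1-v^2}|x|}$), so even convergence of that integral is not guaranteed.
\end{itemize}

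After swapping the two groups, the fixed-point argument works and gives holomorphy in $\lambda\in D_r$ as you claim. The ordering $\Re\gamma_4<\Re\gamma_3<0<\Re\gamma_2<\Re\gamma_1$ is constant there, so the splitting of indices does not change with $\lambda$.

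Relatedly, your warning about $\gamma_3$ is misplaced. In $D_r$ no root has real part between $\Re\gamma_3$ and $0$. The mode that cannot be characterized merely by decay is the fast mode $Y_4$, since $\gamma_3$ lies between $\Re\gamma_4$ and $0$ and $Y_4$ lies in the strong stable direction.
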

We can now define the Evans function as follows.
\begin{definition}
    The Wronskian determinant of the four linearly independent solutions in Lemma \ref{lemma: ind_sol_epsilon} at any $x \in \mathbf
    R$ is called the Evans function $E(\lambda,\varepsilon)$ of the problem \eqref{ode_epsilon}. 
    \label{evans_f_epsilon}
\end{definition}
\begin{remark}
    As before, since the Wronskian determinant of any four particular solutions of the ODE \eqref{ode_epsilon} is independent of $x$, the values of the Evans function $E(\lambda, \varepsilon)$ are independent of $x$.
\end{remark}
As in Section \ref{section: def_evans}, we denote as fast modes $Y_{f}^{-}(\lambda,x)$ and $Y_{f}^{+}(\lambda,x)$ two of the solutions from Lemma \ref{lemma: ind_sol_epsilon}, the first decaying at $-\infty$ with decay rate $\gamma_{1}(\lambda)$ and the second decaying at $+\infty$ with decay rate $\gamma_{4}(\lambda)$. Similarly, we denote as slow modes $Y_{s}^{-}(\lambda,x)$ and $Y_{s}^{+}(\lambda,x)$ the other two solutions from Lemma \ref{lemma: ind_sol_epsilon}, the first decaying at $-\infty$ with decay rate $\gamma_{2}(\lambda)$ and the second decaying at $+\infty$ with decay rate $\gamma_{3}(\lambda)$. Then, the Evans function can be written, for $\lambda \in D_r$ and $\varepsilon \in (-\varepsilon_0,\varepsilon_0)$, as
\begin{equation}
    E(\lambda, \varepsilon) = (Y_s^- \wedge Y_f^- \wedge Y_s^+ \wedge Y_f^+ )(\lambda,0).
\end{equation}
As in Section \ref{section: def_evans}, by the analytic dependence of eigenvalues and eigenvectors in $\lambda$, and their smooth dependence in $\varepsilon$, we have the following lemma, whose proof is similar to one of Lemma \ref{lemma: cont_evans}.
\begin{lemma}
    The Evans function $E(\lambda, \varepsilon)$ can be continued analytically in $\lambda$ near $\lambda = 0$. It is also infinitely smooth in $\varepsilon$ near $\varepsilon = 0$. 
\end{lemma}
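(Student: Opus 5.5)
The plan is to repeat the proof of Lemma \ref{lemma: cont_evans}, keeping track of the parameter $\varepsilon$ throughout. There are three steps.

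\emph{Analyticity in $\lambda$.} The characteristic polynomial of $M_{\varepsilon,\pm}(\lambda)$ does not depend on $\varepsilon$. Hence, by Lemma \ref{lemma: distinct_roots} and Lemma \ref{lemma: regular_roots}, the exponents $\gamma_i(\lambda)$ are holomorphic in a full ball $B_r$ around $\lambda=0$, uniformly in $\varepsilon$. The eigenvectors have the following properties:
\begin{itemize}
\item $v_1^{\pm}(\lambda),v_4^{\pm}(\lambda)$ are analytic in $\lambda$ and smooth in $\varepsilon$, since they belong to simple eigenvalues.
\item $v_{2,3}^{\pm}(\lambda)$ are given by the explicit formulas above. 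These are analytic in $\lambda$ and smooth in $\varepsilon$ through $\omega(\varepsilon)$, provided the denominators $\gamma_{2,3}^2-4\omega^2$ stay away from zero. This holds because $\gamma_{2,3}(0)=0$ and $\omega(0)=v\neq 0$, after shrinking $r$ and $\varepsilon_0$.
\end{itemize}

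\emph{Construction of the modes.} Write $Y^{-}_j(\lambda,x)=Z^-_j(\lambda,x)e^{\gamma_j(\lambda)x}$ for $j=1,2$. Then $Z^-_j$ solves
\[
\partial_x Z=(M_\varepsilon(\lambda,x)-\gamma_j(\lambda))Z, \qquad Z\to v_j^-(\lambda) \text{ as } x\to-\infty .
\]
The coefficient $M_\varepsilon(\lambda,x)-M_{\varepsilon,-}(\lambda)$ decays like $e^{-\eta|x|}$, uniformly for small $\varepsilon$. For this I will use that $\phi_\varepsilon$ converges exponentially to its limits with rate close to $2\sqrt{1-v^2}$, which follows from the Lyapunov--Schmidt construction in Proposition \ref{prop: persistence_rho_0}, together with the exponential decay of $V$.

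By the Gap Lemma, as in \cite[Proposition 2.7]{kapitula&sandstede} and \cite{gardner_zumbrun}, I solve this by a fixed-point (Volterra) integral equation on $(-\infty,0]$ in a weighted space. The spectral gap between $\gamma_j$ and the other exponents is at most of size $|\gamma_2-\gamma_3|\to0$, but this is compensated by the decay rate $\eta$ of the coefficients. Choosing $r$ small so that $|\Re(\gamma_k-\gamma_j)|<\eta/2$ for the slow exponents makes the integral operator a contraction. The kernels are analytic in $\lambda$ and $C^\infty$ in $\varepsilon$, because $M_\varepsilon$ depends smoothly on $\varepsilon$ pointwise with $x$-uniform exponential bounds. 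The fixed point therefore inherits analyticity in $\lambda\in B_r$ and smoothness in $\varepsilon$. The same construction on $[0,\infty)$ gives $Y_{3,4}$.

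\emph{Conclusion.} These continuations agree with the solutions of Lemma \ref{lemma: ind_sol_epsilon} on $D_r$ by uniqueness of the decaying solutions. The Wronskian at $x=0$, a multilinear combination of the modes, is then analytic in $\lambda$ and smooth in $\varepsilon$.

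\emph{Main obstacle.} The hardest part is the uniform-in-$\varepsilon$ exponential convergence of $M_\varepsilon(\lambda,x)$ to $M_{\varepsilon,\pm}(\lambda)$, together with the smooth $\varepsilon$-dependence of $\phi_\varepsilon$ in weighted norms. Proposition \ref{prop: persistence_intro} only gives pointwise smoothness in $\varepsilon$. To get it, I would redo the implicit function argument in exponentially weighted $H^2$ spaces, where $L$ remains Fredholm for small weights. This yields $\varepsilon\mapsto\rho_\varepsilon-\rho_0\in e^{-\eta\langle x\rangle}H^2$ smooth, and hence the needed smoothness of the coefficients in $\varepsilon$.
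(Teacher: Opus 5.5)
Your proposal is correct and more complete than what the paper writes for this lemma; it follows the same Gap-Lemma idea but implements it differently. The paper only observes that the $\gamma_i$ are $\varepsilon$-independent and that the asymptotic eigenvectors are analytic in $\lambda$ and smooth in $\varepsilon$, then refers to the proof of Lemma~\ref{lemma: cont_evans}. In the main text that proof is the remark of \cite[Lemma 4.7]{pelinovsky}. In Appendix~\ref{Section: appendix} it is the Kapitula--Sandstede construction on $\Lambda^2\mathbf{C}^4$: after compactifying with $\tau=\tanh(\kappa x)$, the two modes at each end are packaged into one 2-form $Y_\pm$, the strong unstable manifold of $(\eta_\pm(\lambda),\pm1)$.

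You instead apply the Gap Lemma mode by mode through Volterra equations, as in \cite{gardner_zumbrun}. The exterior-algebra route gives analyticity of $E=Y_-\wedge Y_+$ directly, but says nothing about individual modes. Your route gives analyticity in $\lambda$ and smoothness in $\varepsilon$ of each of $Y_s^\pm,Y_f^\pm$ separately, which is what the later computations of $\partial_\lambda Y_s^\pm$ and $\partial_\varepsilon Y_f^\pm$ actually use.

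You are also right that smooth asymptotic eigen-data do not by themselves give smoothness in $\varepsilon$. One needs $\varepsilon\mapsto M_\varepsilon(\lambda,\cdot)-M_{\varepsilon,\pm}(\lambda)$ to be smooth into an exponentially weighted space, i.e.\ a uniform-in-$\varepsilon$ version of \eqref{appendix: exponential_decay}. The paper does not address this. Your weighted Lyapunov--Schmidt argument supplies it: $L$ stays Fredholm of index zero with kernel $\mathrm{span}\{\partial_x\rho_0\}$ on $e^{-\eta\langle x\rangle}H^2$ for $0<\eta<2\sqrt{1-v^2}$, with $\eta$ also taken below the decay rate of $V$.

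Two points need tightening. First, in the Volterra equation for $Y_s^-$, split with the Riesz projection onto the group $\{\gamma_2,\gamma_3,\gamma_4\}$ versus $\{\gamma_1\}$, and symmetrically $\{\gamma_1,\gamma_2,\gamma_3\}$ versus $\{\gamma_4\}$ for $Y_s^+$. Individual eigenprojections will not do, because they are singular at $\lambda=0$, where $v_2^\pm(0)=v_3^\pm(0)$ and $M_{\varepsilon,\pm}(0)$ has a Jordan block. The group projection is analytic, and the factor $(1+|x-y|)$ from the Jordan block is absorbed by the decay $e^{-\eta|y|}$ of the coefficients. Second, ``agree by uniqueness'' is not accurate. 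A slow mode is determined only modulo the fast mode at the same end: $Y_s^-$ up to multiples of $Y_f^-$, and $Y_s^+$ up to multiples of $Y_f^+$. The Wronskian is invariant under these changes, so your conclusion stands.
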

In the next subsections we compute a Taylor expansion of $E(\lambda, \varepsilon)$ at $\lambda=0$ and $\varepsilon=0$. In particular, we compute derivatives of the Evans function in $\lambda$ and $\varepsilon$. Regarding the derivatives in $\lambda$, we can use the results of the previous sections. In the next section we focus on the term $\partial_{\varepsilon}(Y_f^{-}-Y_f^{+})(0,0)$. 
\subsubsection{Derivatives of the fast modes with respect to $\varepsilon$} In this section we aim at computing $\partial_{\varepsilon}(Y_f^--Y_f^+)(0,0)$. We begin by differentiating in $\varepsilon$ the system in \eqref{ode_epsilon}, and by evaluating it at $\lambda =0$ and $\varepsilon=0$. We obtain, 
\begin{equation}
    (\partial_{\varepsilon}Y_f^{\pm})'(0,x) = M_{0}(0,x)\partial_{\varepsilon}Y_f^{\pm}(0,x) + \partial_{\varepsilon}M_0(0,x)Y_f^{\pm}(0,x).
    \label{eq: der_epsilon_ode}
\end{equation}
In order to evaluate $\partial_{\varepsilon}M_0(0,x)$, we use the properties of the gray mode $\phi_{\varepsilon}(x-s_{\varepsilon})$. We denote by $r(x)$ and $\psi(x)$ the first order terms in $\varepsilon$ of the expansion of the real and imaginary part of $\phi_{\varepsilon}(x-s_{\varepsilon})$, respectively. In other terms, we write
\begin{equation}
    \begin{split}
        \Re\phi_{\varepsilon}(x-s_{\varepsilon}) &= \Re\phi_0(x-s_0) + r(x) \varepsilon + O(\varepsilon^2) \\
        \Im\phi_{\varepsilon}(x-s_{\varepsilon}) &= \Im\phi_0(x-s_0) + \psi(x) \varepsilon + O(\varepsilon^2),
    \end{split} 
    \label{corrections}
\end{equation}
for all $x \in \mathbf{R}$. As we can verify by expanding \eqref{eq: GP_V_main_stat} at first order in $\varepsilon$, the functions $r(x)$ and $\psi(x)$ satisfy
\begin{equation}
    -L^0_1r - D_1^0 \psi = -V(x)\Re\phi_0(x-s_0)
    \label{eq: r},
\end{equation}
where $L^0_{1,2}$ and $D_{1,2}^0$ denote the operators in \eqref{spectral_problem}, evaluated at $\varepsilon = 0$.
We can now write $\partial_\varepsilon M_0(0,x)$ as 
\begin{equation}
    \partial_\varepsilon M_0(0,x) = \begin{pmatrix}
        0 & 0 & 0 & 0 \\
        0 & 0 & 0 & 0 \\
        12\Re(\phi_0)r + 4 \Im(\phi_0) \psi + 2V(x) & 4\Re(\phi_0) \psi + 4\Im(\phi_0) r  & 0 & 0 \\
        4\Re(\phi_0) \psi + 4\Im(\phi_0) r & 4\Re(\phi_0) r + 12 \Im(\phi_0) \psi + 2V(x) & 0 & 0 
    \end{pmatrix}.
    \label{matrix_derivative_epsilon}
\end{equation}
\begin{comment}
    In particular, recalling that $Y_f^{\pm}(0,x) = u_1(x)$, the non-homogeneous term in \eqref{eq: der_epsilon_ode} reads $\partial_{\varepsilon}M_0(0,x)Y_f^{\pm}(0,x) = [0,0,\   \bigl(12\Re\phi_0 r + 4 \Im\phi_0 \psi + 2V(x)\bigr)\phi_0', \ \ 4\phi_0'(\Re\phi_0 \psi + \Im\phi_0 r)].$
\end{comment}
(as before, we denote $\phi_{0}(x-s_0)$ simply by $\phi_0$). This time, the non-homogeneous term in \eqref{eq: der_epsilon_ode} reads
\begin{equation}
    \partial_\varepsilon M(0,x)u_1(x) = [0,\ 0, \ \bigl(12\Re(\phi_0)r + 4 \Im(\phi_0) \psi + 2V(x)\bigr) \phi_0', \bigl(4\Re(\phi_0) \psi + 4\Im(\phi_0) r\bigr) \phi_0' ]^T.
\end{equation}
We try to solve \eqref{eq: der_epsilon_ode} by variation of parameters, and we write
\begin{equation}
    \partial_\varepsilon Y_f^{\pm}(x) = \sum_{i=1}^4c_i^{\pm}(x) u_i(x).
\end{equation}
As before, we are only interested in the coefficients $c_2^{\pm}(0)$. Following the same strategy of Section \ref{section: der_fast_modes}, we have, by means of Cramer's rule, 
\begin{equation}
    (c_2^{\pm})'(x) = -(1-v^2)\phi'_0(12\Re\phi_0 r + 4 \Im\phi_0 \psi  + 2V(x))\phi'_0.
\end{equation}
Notice that the two derivatives coincide, and we write $ c_2'(x):=(c_2^{-})'(x) = (c_2^{+})'(x)$. Requiring $c_2^{\pm}(\pm\infty) =0$, we obtain
\begin{equation}
     [c_2^- -  c_2^+](0) = \int_{-\infty}^{+\infty}c_2'(t) \ dt = -(1-v^2)\int_{-\infty}^{+\infty}(\phi_0')^2 (12\Re\phi_0 r + 4 \Im\phi_0 \psi  + 2V(t))dt.  
     \label{c_2 epsilon}
\end{equation}
We now evaluate the expression above by using the relation in \eqref{eq: r}. We rewrite the potential term in \eqref{c_2 epsilon} as
\begin{equation}
        2\int_{-\infty}^{+\infty}(\phi'_0)^2 V(t)dt = 2\int_{-\infty}^{+\infty}\phi_0'(V(t)\Re\phi_0)'dt - 2\int_{-\infty}^{+\infty}\phi_0'V'(t)\Re\phi_0dt 
        \label{eq: comp_pot}
\end{equation}
By using \eqref{eq: r}, we have
\begin{equation}
    (V(t)\Re\phi_0)' = L_1^0r'+D_1^0\psi' -6 \Re(\phi_0)\phi_0'r -2 \Im(\phi_0) \phi_0'\psi.
\end{equation}
Using the fact that $L_1^0\phi_0'=0$ and $D_2^0\phi_0'=0$, the first term on the right-hand side of \eqref{eq: comp_pot} reads $$\int_{-\infty}^{+\infty}(\phi_0')^2(-12\Re\phi_0 r - 4 \Im\phi_0 \psi)dt.$$
Integrating by parts the second contribution in \eqref{eq: comp_pot}, we conclude that
\begin{equation}
    [c_2^- -  c_2^+](0) = (1-v^2)\int_{-\infty}^{+\infty} V''(t)(1-|\phi_0(t-s_0)|^2) dt = (1-v^2) M''(s_0),
\end{equation}
where $M''(s_0)$ has been introduced in Proposition \ref{prop: persistence_intro}. Finally, we obtain
\begin{equation}
    \partial_\varepsilon(Y_f^{-}-Y_f^{+})(0,0) = \sum_{j = 1, 3, 4}\tilde{c}_ju_j(0) +  (1-v^2) M''(s_0) \ u_2(0),
    \label{eq: der_fast_epsilon}
\end{equation}
for some constants $\tilde{c}_j$, for $j=1,3,4$.
\subsubsection{Derivative of the Evans function in $\lambda$ and $\varepsilon$} We can now compute the expansion of $E(\lambda,\varepsilon)$ at $\lambda=0$ and $\varepsilon=0$. The first derivative $\partial_{\varepsilon}E(0,0)$ is zero, thanks to the identity $Y_{f,s}^{-}(0,0)=Y_{f,s}^{+}(0,0)$. We consider the term $\partial_{\lambda}\partial_{\varepsilon}E(0,0)$. We obtain
\begin{equation}
    \partial_{\lambda}\partial_{\varepsilon}E(0,0) = \bigl(\partial_{\lambda}(Y_s^--Y_s^+)\wedge\partial_{\varepsilon}(Y_f^--Y_f^+)\wedge Y_s^+\wedge Y_f^+\bigr)(0,0).
\end{equation}
Notice that the term containing $\partial_{\lambda}(Y_f^--Y_f^+)(0,0)$ gives a vanishing contribution to the computation of $\partial_{\lambda}\partial_{\varepsilon}E(0,0)$, by \eqref{der_fast}. By means of \eqref{slow_modes} and \eqref{eq: der_fast_epsilon}, we obtain
\begin{equation}
    \partial_{\lambda}\partial_{\varepsilon}E(0,0) = -2M''(s_0).
\end{equation}
We can now state the following proposition.
\begin{proposition}
    For $\lambda$ and $\varepsilon$ near zero the Evans function has the expansion
    \begin{equation}
        E(\lambda,\varepsilon) = \lambda\bigl(-2P'_r(v)\lambda^2 -2M''(s_0) \varepsilon + O(\lambda^3, \lambda\varepsilon,\varepsilon^2)\bigr). 
    \end{equation}
    \label{prop: expansion_2}
\end{proposition}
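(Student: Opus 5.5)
The plan is to assemble the expansion from a two-variable Taylor expansion of $E(\lambda,\varepsilon)$ at $(0,0)$. This is legitimate because $E$ is analytic in $\lambda$ and smooth in $\varepsilon$ near the origin, by the preceding lemma. I would write
\begin{equation*}
E(\lambda,\varepsilon)=\sum_{j+k\le 3}\frac{\partial_\lambda^j\partial_\varepsilon^k E(0,0)}{j!\,k!}\lambda^j\varepsilon^k+O\bigl((|\lambda|+|\varepsilon|)^4\bigr),
\end{equation*}
and identify each coefficient. The pure $\lambda$-coefficients are already known at $\varepsilon=0$, since $E(\lambda,0)=E(\lambda)$. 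By Proposition \ref{prop: expansion}, $E(0,0)=\partial_\lambda E(0,0)=\partial_\lambda^2E(0,0)=0$ and $\partial_\lambda^3E(0,0)/6=-2P_r'(v)$. The mixed coefficient is $\partial_\lambda\partial_\varepsilon E(0,0)=-2M''(s_0)$, as computed just above. Also $\partial_\varepsilon E(0,0)=0$.

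The key structural step is to show that $E(0,\varepsilon)\equiv 0$ for all small $\varepsilon$. This will kill every pure $\varepsilon^k$ term and make $\lambda$ a factor of $E$. The reason is gauge invariance, which survives the perturbation. For every $\varepsilon$, the vector $[-\Im\phi_\varepsilon,\Re\phi_\varepsilon]^T$ lies in the kernel of $\mathcal{L}_\varepsilon$ at $\lambda=0$. In first-order form it gives the bounded solution $U_\varepsilon(x)=[-\Im\phi_\varepsilon,\Re\phi_\varepsilon,-\Im\phi_\varepsilon',\Re\phi_\varepsilon']^T$. Its limits at $\pm\infty$ are exactly the kernel vectors $v_2^-(0)=v_3^-(0)$ and $v_2^+(0)=v_3^+(0)$ of $M_{\varepsilon,\pm}(0)$, computed with $\omega(\varepsilon)$ and suitably normalized. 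Hence, at $\lambda=0$, both slow modes $Y_s^\pm(0,x)$ are multiples of $U_\varepsilon$. By the normalizations of $v_{2,3}^\pm$, they are in fact equal up to a nonzero smooth scalar. Two columns of the Wronskian are therefore parallel, so $E(0,\varepsilon)=0$. Consequently $E(\lambda,\varepsilon)=\lambda\,\tilde E(\lambda,\varepsilon)$, where $\tilde E$ is analytic in $\lambda$ and smooth in $\varepsilon$.

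Once $\lambda$ is factored out, I expand $\tilde E$ to second order. Its constant term is $\partial_\lambda E(0,0)=0$. Its $\lambda$-coefficient is $\partial^2_\lambda E(0,0)/2=0$. Its $\lambda^2$-coefficient is $-2P_r'(v)$, and its $\varepsilon$-coefficient is $\partial_\lambda\partial_\varepsilon E(0,0)=-2M''(s_0)$. The remaining Taylor remainder of $\tilde E$ collects the monomials $\lambda^3,\lambda^2\varepsilon,\lambda\varepsilon^2$ from $E$ (that is, $\lambda^2,\lambda\varepsilon,\varepsilon^2$ in $\tilde E$) and higher. These are exactly the error terms in the statement, with $O(\lambda^3)$ in $\tilde E$ coming from the $O(\lambda^4)$ remainder of Proposition \ref{prop: expansion}. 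One caveat: the stated remainder contains the lone $\varepsilon^2$ term. That term is accounted for by the $\partial_\lambda\partial_\varepsilon^2E(0,0)$ coefficient of $E$, which is not computed, so it legitimately appears at order $\lambda\varepsilon^2$.

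The main obstacle is the claim $E(0,\varepsilon)\equiv0$. It requires care on two points. First, the slow modes at $\lambda=0$ are defined through the analytic continuation, so one must check that the continued $Y_s^\pm(0,x)$ is the bounded gauge mode. It should not be, for example, the linearly growing mode. Second, the continuation must be uniform in $\varepsilon$. I would handle both by repeating the argument used at $\varepsilon=0$: the decomposition \eqref{eq: ansatz_Z} with $w^\pm(0,x)=0$ forces $Z_s^\pm(0,x)$ to be the solution tending to $v_{3,2}^\pm(0)$. Uniqueness of such a solution, namely the bounded solution asymptotic to the kernel vector, follows from the exponential convergence of $M_\varepsilon(0,x)$ to $M_{\varepsilon,\pm}(0)$. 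Uniformity in $\varepsilon$ follows from the smooth dependence on $\varepsilon$ of $\omega(\varepsilon)$ and of the eigenvectors established above.
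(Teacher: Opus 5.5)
Your proposal is correct and is essentially the paper's argument. The coefficients come from $\partial_\lambda^3E(0,0)=-48\sqrt{1-v^2}$, $\partial_\varepsilon E(0,0)=0$ and $\partial_\lambda\partial_\varepsilon E(0,0)=-2M''(s_0)$. Your extra step $E(0,\varepsilon)\equiv 0$, via persistence of the gauge mode $[-\Im\phi_\varepsilon,\Re\phi_\varepsilon]^T$, supplies the justification for the overall factor $\lambda$ that the paper uses without proof: its check $Y^-_{f,s}(0,0)=Y^+_{f,s}(0,0)$ only kills $\partial_\varepsilon E(0,0)$, not, e.g., $\partial_\varepsilon^2E(0,0)$.

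The only imprecision is your \emph{uniqueness} claim. At $\lambda=0$, the solution tending to $v_2^-(0)$ as $x\to-\infty$ (resp.\ $v_3^+(0)$ as $x\to+\infty$) is unique only modulo multiples of $Y_f^-(0,x)$ (resp.\ $Y_f^+(0,x)$). This is harmless, since the ambiguity drops out of the Wronskian.
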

We can conclude the following corollary.
\begin{corollary}
    Let $v \in (-1,1)\backslash\{0\}$. Let $\phi_{\varepsilon,v}(x-s_{\varepsilon})$ be the family of gray modes of Proposition \ref{prop: persistence_intro}, defined for $\varepsilon \in (-\varepsilon_0,\varepsilon_0)$, and consider the operator $\mathcal{L}_{\varepsilon}$ defined in \eqref{matrix_operator_L}. Then, $\mathcal{L}_{\varepsilon}$ admits a pair of eigenvalues that satisfy
    \begin{equation}
         \lambda^2 = -\frac{M''(s_0)}{P'_r(v)}\varepsilon + O(\varepsilon^{3/2}).
    \end{equation}
    Assume without loss of generality $\varepsilon>0$. Then the gray mode $\phi_{\varepsilon,v}(x-s_{\varepsilon})$ is spectrally unstable for $M''(s_0)<0$ and $\varepsilon$ small enough.
\end{corollary}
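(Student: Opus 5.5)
The corollary should follow from Proposition \ref{prop: expansion_2} by a zero-counting argument for the reduced Evans function. Since $E(\lambda,\varepsilon)$ is analytic in $\lambda$ near $0$ and smooth in $\varepsilon$, and its expansion carries an overall factor $\lambda$, I will divide it out and set
\begin{equation*}
    \widetilde{E}(\lambda,\varepsilon) := -2P'_r(v)\lambda^2 - 2M''(s_0)\varepsilon + R(\lambda,\varepsilon), \qquad R = O(\lambda^3,\lambda\varepsilon,\varepsilon^2).
\end{equation*}
The root $\lambda=0$ of $E$ is the persisting gauge zero eigenvalue. It is forced by phase invariance of \eqref{eq: GP_V_main} for every $\varepsilon$, since $[-\Im\phi_\varepsilon,\Re\phi_\varepsilon]^T$ stays in the kernel. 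The two remaining zeros near the origin come from $\widetilde{E}$.

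I will rescale $\lambda = \varepsilon^{1/2}\mu$ for $\varepsilon>0$. Then $\varepsilon^{-1}\widetilde{E}(\varepsilon^{1/2}\mu,\varepsilon) = -2P'_r(v)\mu^2 - 2M''(s_0) + O(\varepsilon^{1/2})$, and the error is uniform for $\mu$ in compact sets. The limiting polynomial has two simple roots $\mu_\pm = \pm\sqrt{-M''(s_0)/P'_r(v)}$, which are nonzero because $M''(s_0)\neq0$ and $P'_r(v)=4\sqrt{1-v^2}>0$. Rouché's theorem (or the implicit function theorem in $(\mu,\varepsilon^{1/2})$) then gives exactly two zeros $\mu_\pm(\varepsilon)=\mu_\pm+O(\varepsilon^{1/2})$. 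Squaring gives $\lambda^2 = \varepsilon\mu^2 = -\frac{M''(s_0)}{P'_r(v)}\varepsilon + O(\varepsilon^{3/2})$, which is the claimed formula.

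For instability, take $M''(s_0)<0$ and $\varepsilon>0$. Then $\mu_\pm$ are real and nonzero, so one zero satisfies $\lambda_+ \approx \sqrt{\varepsilon}\,\mu_+$ with $\Re\lambda_+>0$ for $\varepsilon$ small. This is the main obstacle. The extended Evans function vanishes at $\lambda$ with $\Re\lambda>0$ exactly when $\lambda$ is a genuine eigenvalue with an $H^2$ eigenfunction. By Remark \ref{remark: ess_spectrum}, the essential spectrum near $0$ lies on the imaginary axis. Lemma \ref{lemma: distinct_roots}, applied to $M_{\varepsilon,\pm}(\lambda)$, whose characteristic polynomial is unchanged, shows that $\lambda_+\in D_r$ lies where the original (unextended) Evans function is defined. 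There $Y_s^-, Y_f^-$ decay at $-\infty$ and $Y_s^+, Y_f^+$ decay at $+\infty$ exponentially. Vanishing of the Wronskian therefore produces a nontrivial solution decaying at both ends, and its first two components lie in $H^2\times H^2$. Thus $\lambda_+$ is an eigenvalue of $\mathcal{L}_\varepsilon$ with positive real part, and $\phi_{\varepsilon,v}(x-s_\varepsilon)$ is spectrally unstable.

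One point needs care. The gauge zero must be simple for $\varepsilon\neq0$, so that the cubic-order zero at $(0,0)$ splits as $\lambda\cdot\widetilde{E}$ and $\widetilde{E}(0,\varepsilon)=-2M''(s_0)\varepsilon+O(\varepsilon^2)\neq0$. The expansion already encodes this, because its terms all carry a factor $\lambda$. I will justify that factor by the exact identity $E(0,\varepsilon)=0$, which follows from the kernel element above. With the factor justified, the Weierstrass preparation applied to $E/\lambda$ makes the counting rigorous.
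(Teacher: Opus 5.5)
Your proposal is correct and follows the paper's route. The paper obtains the corollary by dividing the expansion of Proposition \ref{prop: expansion_2} by $\lambda$ and setting $-2P'_r(v)\lambda^2-2M''(s_0)\varepsilon$ to zero; your rescaling $\lambda=\varepsilon^{1/2}\mu$ with Rouch\'e's theorem for fixed $\varepsilon$, your derivation of $E(0,\varepsilon)=0$ from the gauge mode, and your check that a zero in $D_r$ yields an $H^2$ eigenfunction supply details the paper leaves implicit.

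One refinement: in the unstable case $M''(s_0)<0$, $\varepsilon>0$, this certifies only $\lambda_+$ as an eigenvalue, because the partner zero $\lambda_-$ has negative real part, where the continued slow modes grow at infinity. The second member of the pair should instead be $-\overline{\lambda_+}$. It is an eigenvalue by the Hamiltonian symmetry $\lambda\to-\overline{\lambda}$ of $\mathcal{L}_\varepsilon=J^{-1}H$, and it satisfies the same formula.
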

We conclude this section by considering the case of the Gaussian external potential $V_{1}$ defined in \eqref{eq: pot_laser}. As we mentioned in the introduction, this kind of potential is of physical relevance, as it models the repulsive effect of a laser beam moving on the BEC. As we will see below,  the gray modes which bifurcate from the gray solitons are spectrally unstable.\\
We begin by noticing that $V_{1}$ is an even function. Thus, we have that $s_0=0$ is a critical point of the effective potential $M(s)$, i.e. we have $M'(0) =0$. In Figure \ref{fig: M''} we plot, for $v = 0.5 $, the value of $M''(0)$ as a function of the variance $\sigma >0$ of $V_{1}$, for $\sigma \in (0,10)$. We observe that $M''(0)$ takes negative values for all $\sigma$. By a change of variables in $M''(0)$, one can show that the same conclusion holds for any $v \in (-1,1) \backslash\{0\}$. We conclude that, for any $v \in (-1,1) \backslash\{0\}$, there exists a unique family of gray modes $\phi_{\varepsilon,v}(x-s_{\varepsilon})$ that bifurcates from the gray soliton $\phi_{0,v}(x)$. Moreover, for $\varepsilon>0$ small enough, the elements of this family are spectrally unstable solutions to \eqref{eq: GP_V_main}. 
\begin{figure}
    \centering
    \includegraphics[width=0.8\linewidth]{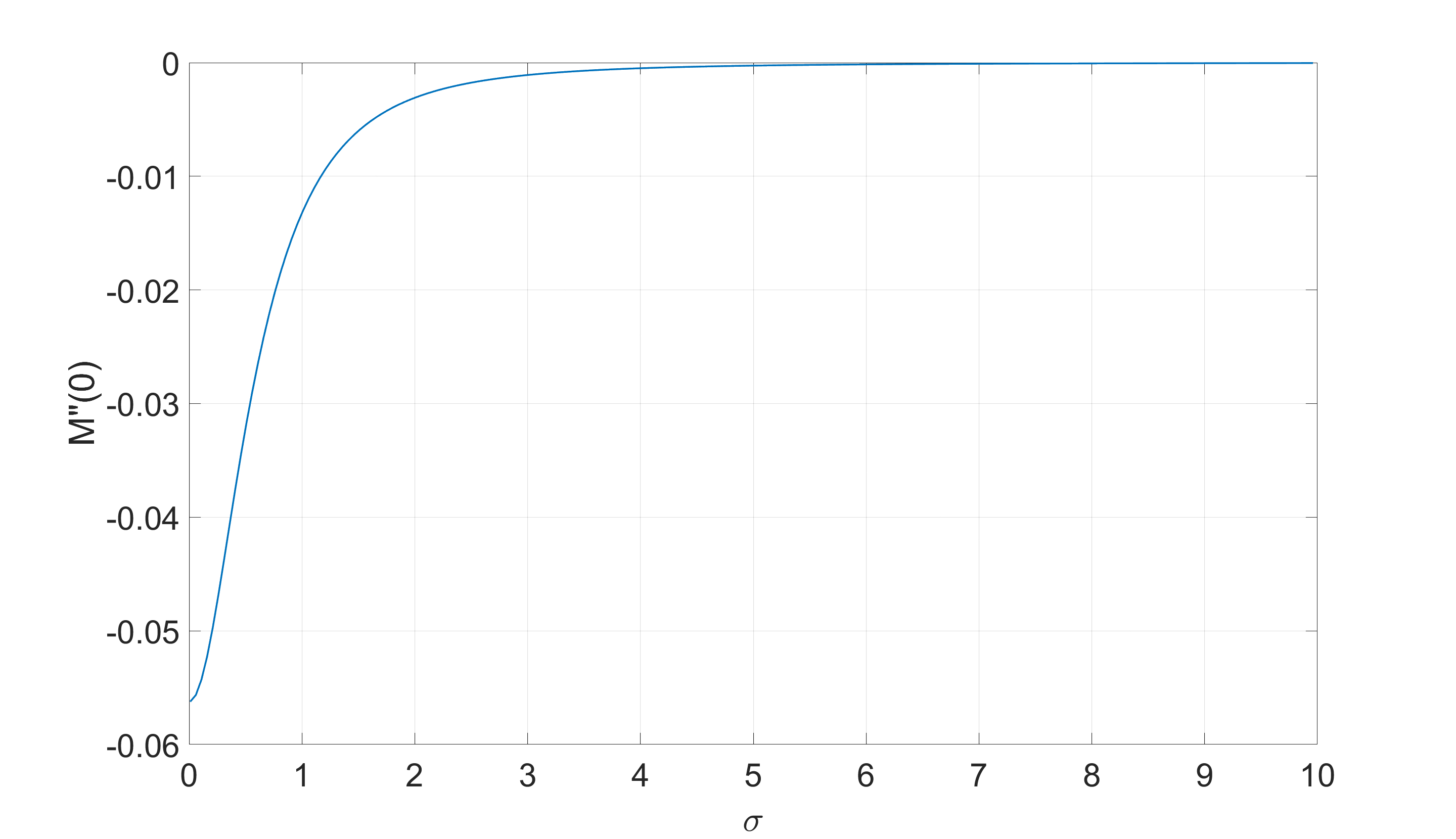}
    \caption{Plot of $M''(0)$ as a function of $\sigma$ for the case of the gaussian potential $V_1$, and for a velocity $v=0.5$.}
    \label{fig: M''}
\end{figure}
\section{The case of a repulsive short-range potential}
\label{section: delta_potential}
In this section we study equation \eqref{GP_delta_intro}. This equation is obtained from \eqref{eq: GP_V_main} by replacing the smooth potential $V$ with a repulsive delta potential, i.e. for $\varepsilon V(x) = g\delta(x)$, with $g >0$. In this case, thanks to the short-range nature of the potential, sharp existence conditions and explicit expressions for time-independent solutions are available. The goal of this section is to show the spectral instability of those time-independent solutions that bifurcate from the gray solitons $\phi_{0,v}$. \\
Due to the presence of the delta potential, time-independent solutions to \eqref{GP_delta_intro} are not smooth functions at $x=0$. At the origin they satisfy a jump discontinuity on the first derivative, which reads
\begin{equation}
    \partial_x\phi(0^+)-\partial_x\phi(0^-) = 2g\phi(0).
    \label{eq: jump}
\end{equation}
Using the fact that the delta potential vanishes away from the origin, we can construct time-independent solutions to \eqref{GP_delta_intro} by joining together at $x=0$ two copies of a displaced gray soliton (which solves \eqref{GP_delta_intro} with $g=0$), in such a way that continuity and \eqref{eq: jump} are satisfied. This approach has been followed by Hakim in \cite{hakim1997nonlinear} and permits to obtain an explicit expression for all time-independent solutions to \eqref{GP_delta_intro}, as we present below. \\
First of all, and similarly to the case of smooth potentials, no time-independent solution to \eqref{GP_delta_intro} exists for velocities $|v| \geq 1$ (see \cite{hakim1997nonlinear, maris2003}). For $v \in (-1,1) \backslash\{0\}$ and $g>0$ given, time-independent solutions to \eqref{GP_delta_intro} may be written as $\phi_{\xi,v}(x) = \rho_{\xi,v}(x)e^{i\theta_{\xi,v}(x)}$, where 
\begin{equation}
    \begin{split}
        &\rho_{\xi,v}(x) = \sqrt{v^2+(1-v^2)\tanh^2\bigl(\sqrt{1-v^2}(|x|+\xi)\bigr)}  \\ & \\&
    \theta_{\xi,v}(x) = \frac{\pi}{2} + v\int_0^x \Bigl(1-\frac{1}{\rho_{\xi,v}^2(s)}\Bigr)ds,
    \label{eq: modulus_delta}
    \end{split}
\end{equation}
(see \eqref{eq: rho} and \eqref{eq: theta}), and where $\xi >0$ is a displacement parameter that satisfies 
\begin{equation}
    g = (1-v^2)^{3/2}\frac{\tanh(\sqrt{1-v^2}\xi)}{v^2+\sinh^2(\sqrt{1-v^2}\xi)}. 
    \label{gamma_xi}
\end{equation}
Relation \eqref{gamma_xi} is obtained by imposing the jump condition \eqref{eq: jump} for $\phi_{\xi,v}$. By studying this relation one can deduce the following proposition.
\begin{proposition}[\cite{hakim1997nonlinear,maris2003,pham_3}]
    For any given velocity $v \in (-1,1)\backslash\{0\}$, there exists a critical strength $g_{cr}=g_{cr}(v)>0$ such that the following holds. If $g \in (0,g_{cr})$, there are two distinct solutions $\xi_1,\xi_2>0$ to \eqref{gamma_xi}. If $g = g_{cr}$ there is a unique solution to \eqref{gamma_xi}. If $g > g_{cr}$ there are no solutions to \eqref{gamma_xi}. Correspondingly, for $g \in (0,g_{cr})$ there are two distinct time-independent solutions to \eqref{GP_delta_intro}; for $g=g_{cr}$ there is a unique time-independent solution, while for $g >g_{cr}$ no time-independent solutions exist. Moreover, the value of $g_{cr}(v)$ is explicit and it holds
    \begin{equation}
       g_{cr}(v)= \frac{4(1-v^2)}{\sqrt{2}}\frac{[\sqrt{1+8v^2}-(1+2v^2)]^{1/2}}{4v^2-1+\sqrt{1+8v^2}}.
    \end{equation}
    \label{propo: existence}
    \end{proposition}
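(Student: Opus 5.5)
The plan is to reduce the count of time-independent solutions to the count of roots $\xi>0$ of \eqref{gamma_xi}. The correspondence itself is the construction recalled just before the statement, from \cite{hakim1997nonlinear}. Away from $x=0$ a time-independent solution solves the unperturbed stationary equation, so its modulus solves \eqref{eq: rho} with $\varepsilon=0$. Together with $\rho\to1$ at infinity, this forces each half-line piece to be a displaced gray soliton profile. Continuity of $\rho$ at $0$ gives the symmetric form $|x|+\xi$ in \eqref{eq: modulus_delta}, with the phase determined by \eqref{eq: theta}. Imposing the jump condition \eqref{eq: jump} on $\phi_{\xi,v}$ yields exactly \eqref{gamma_xi}. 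I take this correspondence $\xi\leftrightarrow\phi_{\xi,v}$ (distinct $\xi$ giving distinct solutions) as given from the cited works. The real content is then a one-variable study of the function $g(\xi)$ on $(0,\infty)$.

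Set $a=\sqrt{1-v^2}$ and change variables to $t=\tanh(a\xi)$. This is a strictly increasing bijection from $(0,\infty)$ onto $(0,1)$. Using $\sinh^2=t^2/(1-t^2)$, I rewrite \eqref{gamma_xi} as
\begin{equation*}
g = a^3 h(t), \qquad h(t)=\frac{t(1-t^2)}{v^2+(1-v^2)t^2}.
\end{equation*}
Then $h$ is positive on $(0,1)$, and $h(0^+)=h(1^-)=0$ because $v\neq0$ keeps the denominator bounded away from zero. A direct differentiation shows that the sign of $h'(t)$ is the sign of the quadratic in $T=t^2$
\begin{equation*}
q(T)=v^2-(1+2v^2)T-(1-v^2)T^2 .
\end{equation*}
We have $q(0)=v^2>0$, $q(1)=-2<0$, and $q$ has negative leading coefficient. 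Hence $q$ has exactly one root $T_*\in(0,1)$, namely
\begin{equation*}
T_*=\frac{\sqrt{1+8v^2}-(1+2v^2)}{2(1-v^2)},
\end{equation*}
since the discriminant is $(1+2v^2)^2+4v^2(1-v^2)=1+8v^2$. Consequently $h$ increases strictly on $(0,\sqrt{T_*})$ and decreases strictly on $(\sqrt{T_*},1)$.

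The trichotomy now follows from the intermediate value theorem and strict monotonicity on each branch. With $g_{cr}:=a^3h(\sqrt{T_*})$, each $g\in(0,g_{cr})$ is attained exactly once on each branch, giving two values $\xi_1<\xi_2$. The value $g=g_{cr}$ is attained only at $\xi_*=a^{-1}\operatorname{artanh}\sqrt{T_*}$. Values $g>g_{cr}$ are not attained. Transporting this back through the correspondence of the first paragraph gives the statement about time-independent solutions.

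The remaining step is the explicit value of $g_{cr}$. Substituting $t_*=\sqrt{T_*}$ into $a^3 t_*(1-T_*)/(v^2+(1-v^2)T_*)$ and simplifying, one uses $1-T_*=\bigl(3-\sqrt{1+8v^2}\bigr)/(2(1-v^2))$ and $v^2+(1-v^2)T_*=\bigl(\sqrt{1+8v^2}-1\bigr)/2$, then rationalizes. I expect this algebraic simplification to the displayed closed form to be the main (purely computational) obstacle. I would check it by multiplying numerator and denominator by conjugates, for instance by writing $(3-S)(S+1)=(4v^2-1+S)\cdot(\ldots)$ with $S=\sqrt{1+8v^2}$, and verifying the identity after squaring.
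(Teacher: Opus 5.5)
Your proposal is correct and follows the route the paper has in mind. The paper gives no proof of its own: it cites \cite{hakim1997nonlinear,maris2003,pham_3} and notes only that the result follows by studying \eqref{gamma_xi}. Your substitution $t=\tanh(\sqrt{1-v^2}\,\xi)$, the sign analysis of $q$ and the value of $T_*$ all check out, and your formulas give $g_{cr}=\frac{(3-S)\sqrt{S-1-2v^2}}{\sqrt{2}\,(S-1)}$ with $S=\sqrt{1+8v^2}$.

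The algebraic step you left open closes with the identity $(3-S)(4v^2-1+S)=4(1-v^2)(S-1)$, which is immediate from $S^2=1+8v^2$ and turns this expression into the displayed formula.

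One minor point about your first paragraph: continuity at $0$ alone also allows the unbroken translate $\rho_{0,v}(x+\xi)$ and the constant state. It is \eqref{eq: jump} with $g>0$ that rules these out and forces the symmetric gluing $|x|+\xi$ with $\xi>0$.
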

    \begin{remark}
 The bifurcation result in Proposition \ref{propo: existence} can be formulated from a different perspective. For any given value of $g>0$, there exists a critical velocity $v_{cr} = v_{cr}(g) \in (0,1)$ such that two distinct solutions exist for $|v| < v_{cr}$; a single solution exists for $|v|=v_{cr}$; no solutions exist for $|v|> v_{cr}$. This formulation highlights the fact that, in the presence of an external potential, the value of the critical velocity for the existence of time-independent solutions becomes strictly less than one, namely $v_{cr}(g) < v_{cr}(0)=1$, for any $g >0$.
\end{remark}
    In the following, it is convenient to use the displacement $\xi>0$ as the free parameter of the problem, playing the role of $\varepsilon$ in the previous sections, while keeping $v \in (-1,1) \backslash \{0\}$ as fixed. In particular, and similarly to \cite{pham-brachet, pham_3}, we see the strength $g$ as a function $g=g(\xi)$ by \eqref{gamma_xi}. The functions $\phi_{\xi,v}$, then,  will solve equation \eqref{GP_delta_intro} for parameters $v$ and $g=g(\xi)$.\\
As the displacement $\xi$ varies from zero, the solutions $\phi_{\xi,v}$ bifurcate from the gray soliton $\phi_{0,v}$. On the other hand, as $\xi \to +\infty$, the solution $\phi_{\xi,v}$ converges to the constant solution $u=1$. For each $v \in (-1,1)\backslash \{0\}$ fixed, an important quantity is the critical displacement $\xi_{cr}>0$ defined by the relation $g'(\xi_{cr}) =0$, and which admits the explicit expression
\begin{equation*}
    \xi_{cr} = \frac{\text{argcosh}(\frac{1+\sqrt{1+8v^2}}{2})}{2\sqrt{1-v^2}}.
\end{equation*}
The reason for its importance is the following: the numerical simulations of Hakim \cite{hakim1997nonlinear} and the numerical study by Pham and Brachet \cite{pham-brachet, pham_3} suggest that the solutions $\phi_{\xi,v}$ are unstable for $0<\xi < \xi_{cr}$ and stable for $\xi > \xi_{cr}$. At the critical value $\xi = \xi_{cr}$ the solution $\phi_{\xi_{cr},v}$ is believed to be spectrally stable, with neutral modes given by $i\phi_{\xi_{cr},v}(x)$ and  $\partial_{\xi}\phi_{\xi,v}(x)|_{\xi = \xi_{cr}}$. In our previous work \cite{antonelli_caliaro}, we showed the orbital stability of the states $\phi_{\xi,v}$ for $\xi > \xi_{cr}$. The goal of this section is to show the spectral instability of $\phi_{\xi,v}$ for $\xi \in (0,\xi_{cr})$ and small enough.\\
We begin by noticing that the states $\phi_{\xi,v}$ defined in \eqref{eq: modulus_delta} are regular perturbations of the dark solitons $\phi_{0,v}$. Their real and imaginary parts admit the following expression (as before, in the following we consider $v$ as fixed, and we denote $\phi_{\xi,v}$ simply by $\phi_{\xi}$):
\begin{equation}
     \Im \phi_{\xi}(x) = \frac{v^2}{\rho_{\xi}(0)} + \frac{(1-v^2)\tanh(\sqrt{1-v^2}(|x|+\xi))\tanh(\sqrt{1-v^2}\xi)}{\rho_{\xi}(0)} 
\end{equation}
and
\begin{equation}
      \Re \phi_{\xi}(x) = \text{sign}(x)\frac{v\sqrt{1-v^2}}{\rho_{\xi}(0)}(\tanh(\sqrt{1-v^2}(|x|+\xi))-\tanh(\sqrt{1-v^2}\xi)),
\end{equation}
where $\rho_{\xi}(0) = \sqrt{v^2+(1-v^2)\tanh^2(\sqrt{1-v^2}\xi)}$.
We thus observe\footnote{The map $g(\xi)$ and the family of solutions $\phi_{\xi}$, originally defined for $\xi>0$, can be extended to $\xi \leq 0 $ in a smooth way. In particular, for $\xi \leq 0$, the functions $\phi_{\xi}$ are still solutions to \eqref{GP_delta_intro} with $v \in (-1,1)\backslash\{0\}$ and $g(\xi)$.} that $\Re \phi_{\xi}(x)$ and $\Im\phi_{\xi}(x)$ are smooth functions of $\xi$ for any $x \in \mathbf{R}$. Moreover, the limits as $x \to \pm \infty$ read
\begin{equation}
    \begin{split}
        &\Im \phi_{\xi}(x) \to  \frac{v^2+(1-v^2)\tanh(\sqrt{1-v^2}\xi)}{\rho_{\xi}(0)} := \omega(\xi) \\ &    \Re \phi_{\xi}(x) \to \pm \frac{v\sqrt{1-v^2}}{\rho_{\xi}(0)}(1-\tanh(\sqrt{1-v^2}\xi)) = \pm \sqrt{1-\omega^2(\xi)}.
    \end{split}
\end{equation}
In particular, both limits are smooth functions at $\xi=0$, and they are approached exponentially fast.\\
The next step consists in linearizing \eqref{GP_delta_intro} around the state $\phi_{\xi}$. Similarly to Section \ref{section: spectral_problem}, we obtain an eigenvalue problem for the operator $\mathcal{L}_{\xi}$. The latter coincides with $\mathcal{L}_{\varepsilon}$ in \eqref{matrix_operator_L} upon replacing $\phi_{\varepsilon}$ with $\phi_{\xi}$ and $\varepsilon V(x)$ with $g(\xi)\delta(x)$. The spectral instability of $\phi_{\xi}$ is determined by the spectrum of $\mathcal{L}_{\xi}$: if $\mathcal{L}_{\xi}$ admits an eigenvalue with positive real part, then $\phi_{\xi}$ is a spectrally unstable solution. In order to determine the existence of an unstable eigenvalue we formally apply the results in Section \ref{section: perturbed_case}. Notice that this time the effective potential reads 
\begin{equation*}
    M_{\delta}(s):= \int_{\mathbf{R}}\delta(t)[1-|\phi_0(x-s)|^2]ds = 1-|\phi_0(s)|^2, \qquad s \in \mathbf{R}.
\end{equation*}
We conclude the following proposition.
\begin{proposition}
    Let $v \in (-1,1)\backslash\{0\}$. Let $\phi_{\xi,v}(x)$ be the family of time independent solutions to \eqref{GP_delta_intro} defined in \eqref{eq: modulus_delta}. Consider the operator $\mathcal{L}_{\xi}$ arising from the linearization of \eqref{GP_delta_intro} around $\phi_{\xi,v}$. Then, $\mathcal{L}_{\xi}$ admits a pair of eigenvalues that satisfy
    \begin{equation}
         \lambda^2 = -\frac{M_{\delta}''(0)}{P'_r(v)} g'(0)\xi + O(\xi^{3/2}),
    \label{eq: expansion_lambda_delta}
    \end{equation}
    where $$M_{\delta}''(0) = -2(1-v^2)^2 \qquad \text{and} \qquad g'(0) = \frac{(1-v^2)^2}{v^2}.$$
   In particular, the time-independent solutions $\phi_{\xi,v}(x)$ are spectrally unstable for $\xi>0$ small enough.
   \label{prop: delta}
\end{proposition}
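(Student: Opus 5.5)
The plan is to apply, formally, the Evans function analysis of Section \ref{section: perturbed_case}. The displacement $\xi$ plays the role of the small parameter, and the potential term $\varepsilon V(x)$ is replaced by $g(\xi)\delta(x)$.

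\textbf{Step 1: the unperturbed part.} At $\xi=0$ the state $\phi_{0,v}$ is the gray soliton, so the $\xi$-independent part of the Evans function is unchanged. By Proposition \ref{prop: expansion},
\[
E(\lambda,0)=-2P'_r(v)\lambda^3+O(\lambda^4).
\]
The asymptotic matrices $M_{\xi,\pm}(\lambda)$ depend on $\xi$ only through $\omega(\xi)$, which is smooth at $\xi=0$. Their characteristic polynomial is the same as \eqref{ch_pol}. Hence the eigenvalues, the eigenvectors, the slow and fast modes, and the analytic continuation of $E(\lambda,\xi)$ near $\lambda=0$ all carry over verbatim. The only new ingredient is the mixed coefficient $\partial_\lambda\partial_\xi E(0,0)$.

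\textbf{Step 2: the fast-mode derivative in $\xi$.} As in \eqref{eq: der_fast_epsilon}, I would compute the $u_2(0)$-coefficient of $\partial_\xi(Y_f^--Y_f^+)(0,0)$ by variation of parameters. The inhomogeneous term comes from $\partial_\xi M_\xi(0,x)$. By the chain rule, the delta contribution enters as $g'(0)\delta(x)$, since $g(0)=0$. The first-order corrections $r,\psi=\partial_\xi\Re\phi_\xi,\partial_\xi\Im\phi_\xi$ at $\xi=0$ satisfy \eqref{eq: r} with $V$ replaced by $g'(0)\delta$.

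Repeating the integration by parts of \eqref{eq: comp_pot} in the distributional sense gives
\[
[c_2^--c_2^+](0)=(1-v^2)\,g'(0)\,M_\delta''(0),
\qquad
M_\delta''(0)=\partial_s^2\bigl(1-|\phi_0(s)|^2\bigr)\big|_{s=0}.
\]
From $|\phi_0|^2=v^2+(1-v^2)\tanh^2(\sqrt{1-v^2}s)$ one computes $M_\delta''(0)=-2(1-v^2)^2$.

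Then, by \eqref{slow_modes}, $\partial_\lambda\partial_\xi E(0,0)=-2g'(0)M''_\delta(0)$, and
\[
E(\lambda,\xi)=\lambda\bigl(-2P'_r(v)\lambda^2-2M_\delta''(0)g'(0)\xi+O(\lambda^3,\lambda\xi,\xi^2)\bigr).
\]

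\textbf{Step 3: $g'(0)$ and the conclusion.} Differentiating \eqref{gamma_xi} at $\xi=0$ gives
\[
g'(0)=(1-v^2)^{3/2}\cdot\frac{\sqrt{1-v^2}}{v^2}=\frac{(1-v^2)^2}{v^2}>0.
\]
The nonzero roots of the bracket then satisfy \eqref{eq: expansion_lambda_delta}. The implicit function theorem, applied in the variable $\mu=\lambda/\sqrt{\xi}$, controls the error as $O(\xi^{3/2})$. Since $-M_\delta''(0)g'(0)/P'_r(v)>0$, one root has $\lambda\approx+\sqrt{c\,\xi}>0$ for small $\xi>0$. It lies in $D_r$, where zeros of $E$ are genuine eigenvalues, so instability follows.

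\textbf{Main obstacle.} The hard step is justifying Step 2 for a non-smooth potential. The corrections $r,\psi$ have a derivative jump at the origin, and the term $2g'(0)\delta(x)(\phi_0')^2$ must be paired correctly with the integration by parts. Because $\phi_0'$ is smooth, I would treat these pairings as the limit of smooth approximations $V_n\to\delta$, using Proposition \ref{prop: expansion_2} for each $n$ and letting $n\to\infty$. A fully rigorous version would also need convergence of eigenvalues, which I would leave at the formal level as the paper indicates. As a cross-check, I would verify $r,\psi$ directly from the explicit formulas for $\Re\phi_\xi,\Im\phi_\xi$.
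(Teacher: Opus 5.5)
Your proposal is correct and follows the paper's route. Like the paper, you formally transfer the Evans-function expansion of Section~\ref{section: perturbed_case} with $\xi$ in place of $\varepsilon$ and $\varepsilon V$ replaced by $g(\xi)\delta$, so that $M''(s_0)$ becomes $g'(0)M_\delta''(0)$ with $M_\delta(s)=1-|\phi_0(s)|^2$, and your values $M_\delta''(0)=-2(1-v^2)^2$ and $g'(0)=(1-v^2)^2/v^2$ match the paper's. The paper justifies this in one line, so your distributional redo of \eqref{eq: comp_pot} and the rescaling $\mu=\lambda/\sqrt{\xi}$ add detail, and your proposed cross-check from the explicit formulas does work out: the $r,\psi$ part of the integral in \eqref{c_2 epsilon} vanishes, and the delta term alone gives $(1-v^2)g'(0)M_\delta''(0)$.
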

The result in Proposition \ref{prop: delta} is in qualitative agreement with the one obtained in \cite[Figure 2(a)]{pham_3}, where the unstable eigenvalue of $\mathcal{L}_{\xi}$ is computed numerically. On the other hand, if $\xi < 0$, and hence $g<0$, our analysis is inconclusive. \\
Finally, we notice that the quantity $g'(0)$ in Proposition \ref{prop: delta} is singular at $v=0$, and that the zero velocity limit appears problematic. This issue can be solved by keeping in mind that $v$ and $g$ are the physical parameters of the problem; thus, zero velocity limit means that $v \to 0$, while the strength $g$ is kept constant. By \eqref{gamma_xi}, this implies that the parameter $\xi$ has to change with $v$ when we take the limit $v \to 0$. In this case, it is convenient to write the expression in \eqref{eq: expansion_lambda_delta} as a first order expansion in $g$, obtaining 
\begin{equation}
    \lambda^2 = -\frac{M_{\delta}''(0)}{P'_r(v)}g + O(g^{3/2}),
\end{equation}
and in the limit $v \to 0$, we obtain $\lambda^2 = \frac{1}{2}g$. This provides a justification of the formula appearing in \eqref{eq: formula_delta}.
\appendix
\section{Proof of Lemma \ref{lemma: cont_evans}}
\label{Section: appendix}
The goal of this Appendix is to extend the proof of Lemma \ref{lemma: cont_evans}, and to show that the Evans function $E(\lambda)$ of Definition \ref{evans_f} can be continued in an analytic way in a neighborhood of $\lambda =0$. In order to do this, we follow the approach in \cite[Proposition 2.7 and Lemma 2.10]{kapitula&sandstede}. \\
Consider the ODE system 
\begin{equation}
    Y'(\lambda,x) = M(\lambda,x) Y(\lambda,x)
    \label{appendix: ode}
\end{equation}
introduced in Section \ref{section: eigenvalues}, with the matrix $M(\lambda,x)$ given by \eqref{matrix_M}, and consider the asymptotic matrices $M_{\pm}(\lambda) = \lim_{x \to \pm \infty}M(\lambda,x)$, given in \eqref{matrices_pm}. The difference $M(\lambda,x) - M_{\pm}(\lambda)$ is independent of $\lambda$ and decays to zero exponentially fast as $|x| \to \infty$. We consider $\kappa>0$ such that 
\begin{equation}
    |M(\lambda,x) - M_{\pm}(\lambda)|e^{\pm5\kappa x} \leq C,
    \label{appendix: exponential_decay}
\end{equation}
for $C>0$, as in \cite[ Assumption 2.4]{kapitula&sandstede}. In particular, we can choose $\kappa = \frac{2}{5}\sqrt{1-v^2}$.\\
For $\delta >0$, we consider the set $$\Sigma_{\delta} := \{\lambda \in \mathbf{C}| \quad  \Re\lambda \leq 0 \quad \text{with} \quad |\lambda| < \delta\},$$ and we choose $\delta>0$ small enough so that the following conditions hold: the eigenvalues $\gamma_{2}$ and $\gamma_3$ of $M_{\pm}(\lambda)$ are analytic functions of $\lambda$ for any $\lambda \in \Sigma_{\delta}$; moreover, we have 
\begin{equation}
    \Re\gamma_2(\lambda) > -\frac{\kappa}{4} \qquad \text{and} \qquad \Re\gamma_3(\lambda) < \frac{\kappa}{4},
    \label{appendix: kappa_inequalities}
\end{equation} together with $\Re\gamma_1(\lambda) >0$ and $\Re\gamma_4(\lambda)<0$, for all $\lambda \in \Sigma_{\delta}$. Both conditions can be ensured by taking $\delta>0 $ small enough. In particular, the first condition on $\gamma_2$ and $\gamma_3$ can be obtained by using the fact that the eigenvalues of $M_{\pm}(\lambda)$ are analytic in the vicinity of $\lambda =0$  (see Lemma \ref{lemma: regular_roots}). Similarly, the inequalities above can be ensured by the continuity of the eigenvalues with respect to $\lambda$. Finally, we consider $r>0$ and the half-disk $D_r$ as in Lemma \ref{lemma: distinct_roots}, so that the eigenvalues of $M_{\pm}(\lambda)$ are analytic in $\lambda$ and distinct for any $\lambda \in D_r$. We denote $r':=\min\{r,\delta\}$ and we define $\Omega := \Sigma_{r'} \cup D_{r'}$. Then, the goal of this section is to construct an Evans function for $\lambda \in \Omega$ which is an analytic extension of $E(\lambda)$ from Definition \ref{evans_f}.\\
Let $Y_a$ and $Y_b$ be two solutions of the ODE system in \eqref{appendix: ode}. Consider the product $Y = Y_a \wedge Y_b \in \Lambda^2\mathbf{C}^4$, where $\wedge$ denotes the exterior product between vectors in $\mathbf{C}^4$, and $\Lambda^2\mathbf{C}^4$ denotes the second exterior power of $\mathbf{C}^4$ \cite{alexander, allen}. Then, $Y$ is a solution to 
\begin{equation}
    Y'(\lambda,x) = M^{(2)}(\lambda,x)Y(\lambda,x),
    \label{appendix: ode_system_exterior}
\end{equation}
where $M^{(2)}(\lambda,x)$ is defined as the linear derivation on $\Lambda^2\mathbf{C}^4$ induced by $M(\lambda,x)$, i.e. $M^{(2)}(\lambda,x)Y(\lambda,x) := M(\lambda,x)Y_a \wedge Y_b + Y_a \wedge M(\lambda,x) Y_b$ (see, for example, \cite[Section 2]{allen}). Next, consider the asymptotic system
\begin{equation}
    Y' = M^{(2)}_{\pm}(\lambda)Y,
\end{equation}
obtained in the limit $x \to \pm \infty$, and let's first focus on the case $x \to -\infty$. The eigenvalues of the matrix $M_{-}^{(2)}(\lambda)$ are the sums of any two eigenvalues of $M_{-}(\lambda)$. For $\lambda \in \Omega$, define $\alpha_-(\lambda):= \gamma_1(\lambda) + \gamma_2(\lambda)$. Then, $\alpha_{-}(\lambda)$ is analytic in $\lambda$ for $\lambda \in \Omega$, and, if $\Re \lambda >0$, we have that $\alpha_-(\lambda)$ corresponds to the eigenvalue of $M^{(2)}_-(\lambda)$ with the largest real part. In addition, $\alpha_-(\lambda)$ is a simple eigenvalue for $\lambda \in \Omega$ with $\Re\lambda >0$. We set 
\begin{equation}
    Z(\lambda, x) = Y(\lambda, x) e^{-\alpha_-(\lambda)x}.
    \label{appendix: def_Z}
\end{equation}
Then, $Z(\lambda, x)$ satisfies the ODE
\begin{equation}
    Z'(\lambda,x) = [M^{(2)}(\lambda,x) - \alpha_-(\lambda)\text{id}]\ Z(\lambda,x).
    \label{appendix: ode_Z_exterior}
\end{equation}
By setting
\begin{equation}
    x = \frac{1}{2\kappa} \ln \Bigl(\frac{1-\tau}{1+\tau}\Bigr),
\end{equation}
so that $\tau= \tanh(\kappa x)$, the system in \eqref{appendix: ode_Z_exterior} becomes the autonomous system
\begin{equation}
    Z'(\lambda,x) = [M^{(2)}(\lambda,\tau) - \alpha_-(\lambda)\text{id}]\ Z(\lambda,x), \qquad \tau' = \kappa(1-\tau^2),
    \label{appendix: ode_compact}
\end{equation}
where the derivative is always taken with respect to the variable $x$. The system in \eqref{appendix: ode_compact} can be shown to be $C^1$ on $\Lambda^2\mathbf{C}^4 \times [-1,1]$, thanks to the exponential decay \eqref{appendix: exponential_decay} (see \cite{alexander} and \cite{kapitula&sandstede}). For $\tau =-1$, this system reduces to 
\begin{equation*}
    Z' = [M_-^{(2)}(\lambda)-\alpha_-(\lambda)\text{id}]Z.
\end{equation*}
The critical points are the eigenvectors $\eta_-(\lambda)$ of $M^{(2)}_-(\lambda)$ associated to the eigenvalue $\alpha_-(\lambda)$, i.e. such that $[M^{(2)}_-(\lambda)-\alpha_-(\lambda)\text{id}]\eta_-(\lambda)=0$. For $\lambda \in \Omega$ with $\Re \lambda >0$, the eigenvector $\eta_-(\lambda)$ is analytic in $\lambda$, since the eigenvalue $\alpha_-(\lambda)$ of $M^{(2)}_-(\lambda)$ is simple. Moreover, it can be written as
\begin{equation}
    \eta_-(\lambda) = v_1^-(\lambda) \wedge v_2^-(\lambda),
    \label{appendix: def_eta}
\end{equation}
where $v_1^-(\lambda)$ and $v_2^-(\lambda)$ are the eigenvectors of $M_-(\lambda)$ corresponding to the eigenvalues $\gamma_1(\lambda)$ and $\gamma_2(\lambda)$, respectively (see Section \ref{subsection: eigenvectors}). If $\lambda \in \Omega$ with $\Re \lambda \leq 0$, then the eigenvalue $\alpha_-(\lambda)$ is not necessarily simple. However, the eigenvectors $v_1^-(\lambda)$ and $v_2^-(\lambda)$ are analytic in the vicinity of $\lambda =0$, and $\eta_-(\lambda)$ can be continued in an analytic way for $\lambda \in \Omega$ by \eqref{appendix: def_eta}, so that it continues to be an eigenvector of $M_-^{(2)}(\lambda)$ with eigenvalue $\alpha_-(\lambda)$. \\
Next, we linearize \eqref{appendix: ode_compact} at the critical point $(\eta_-(\lambda), -1)$. If $\lambda \in \Omega$ with $\Re \lambda >0$, the linearized system admits one unstable eigenvalue, equal to $2\kappa$, with associated eigenvector $(0,1)$. Suppose instead $\lambda \in \Omega$ with $\Re \lambda \leq 0$. We want to show that any eigenvalue of $M^{(2)}_-(\lambda) - \alpha_-(\lambda)\text{id}$ has real part smaller than $2\kappa$. In other words, that the unstable eigenvalue of the linearized system with largest real part is $2\kappa$, with the eigenvector still pointing in the $\tau$-direction.\\
For $\lambda \in \Omega$, let $\beta_-$ be the eigenvalue of $M^{(2)}_-(\lambda)$ with largest real part. If $\lambda \in \Omega$ with $\Re \lambda >0$, then the only eigenvalues of $M_-(\lambda)$ with positive real parts are $\gamma_1$ and $\gamma_2$. Thus we have $\beta_- = \alpha_-(\lambda)$. If instead $\lambda \in \Omega$ with $\Re\lambda \leq 0$, then $\Re\gamma_1(\lambda)>0$ and $\Re\gamma_4(\lambda)<0$, but $\gamma_2(\lambda)$ and $\gamma_3(\lambda)$ may have crossed the imaginary axis. However, using the assumption in \eqref{appendix: kappa_inequalities}, the sum of the real parts of $\gamma_2(\lambda)$ and $\gamma_3(\lambda)$ is at most $2\kappa/4$. Thus, we can write the following inequality
\begin{equation}
    \Re\beta_- - \Re\gamma_1(\lambda) < \frac{2\kappa}{4},
\end{equation}
for all $\lambda \in \Omega$ with $\Re \lambda \leq 0$. If we define $\beta_c^- := \beta_- - \alpha_-(\lambda)$, then we have $$\Re\beta_c^- <\kappa, \qquad \forall \lambda \in \Omega.$$ We conclude that the unstable eigenvalue of the linearized system with largest real part remains $2\kappa$, with eigenvector $(0,1)$. Thus, for $\lambda \in \Omega$, the point $(\eta_-(\lambda), -1)$ has a one-dimensional strong unstable manifold. Since the tangent vector to this manifold points in the $\tau$-direction, this can be written as a function of $\tau$, which we call $Z_-(\lambda, \tau)$, for $\tau$ near $-1$. By applying the flow of \eqref{appendix: ode_compact}, the solution $Z_-(\lambda, \tau)$ is defined for $\tau \in [-1,1)$. Moreover, by \cite[Lemma 2.2]{kapitula&sandstede} and the fact that $\eta_-(\lambda)$ is analytic in $\lambda$ for $\lambda \in \Omega$, we have that also $Z_-(\lambda,\tau)$ is analytic in $\lambda$ for $\lambda \in \Omega$. By equation \eqref{appendix: def_Z}, this defines a solution
\begin{equation*}
    Y_-(\lambda,x) = Z_-(\lambda,x)e^{\alpha_-(\lambda)x}
\end{equation*}
to \eqref{appendix: ode_system_exterior} such that $|Y_-(\lambda,x)| \to 0$ as $x \to -\infty$ exponentially fast. Moreover, $Y_-(\lambda,x)$ is analytic in $\lambda$ for $\lambda \in \Omega$. \\
Following the reasoning outlined above, we can construct a solution $Z_+(\lambda, \tau)$ as the strong unstable manifold of the point $(\eta_+(\lambda, +1)$, where $\eta_+(\lambda) := v_3^+(\lambda) + v^+_4(\lambda)$ is the eigenvector of $M^{(2)}_+(\lambda)$ associated with the eigenvalue $\alpha_+(\lambda):= \gamma_3(\lambda) +\gamma_4(\lambda)$. Here, $v_3^+(\lambda)$ and $v^+_4(\lambda)$ are the eigenvectors of $M_+(\lambda)$ associated with the eigenvalues $\gamma_3(\lambda)$ and $\gamma_4(\lambda)$ (see Section \ref{subsection: eigenvectors}). We obtain in this way a solution 
\begin{equation*}
    Y_+(\lambda,x) = Z_+(\lambda,x)e^{\alpha_+(\lambda)x}
\end{equation*}
to \eqref{appendix: ode_system_exterior} such that $|Y_+(\lambda,x)| \to 0$ as $x \to +\infty$ exponentially fast. Moreover, $Y_+(\lambda,x)$ is analytic in $\lambda$ for $\lambda \in \Omega$. Finally, we define the Evans function to be
\begin{equation*}
    E(\lambda) = Y_-(\lambda,x) \wedge Y_+(\lambda,x),
\end{equation*}
which takes values in $\Lambda^4\mathbf{C}^4 \cong \mathbf{C}$ and is analytic in $\lambda$ for $\lambda \in \Omega$. Moreover, for $\lambda \in \Omega$ with $\Re\lambda >0$, it coincides with the Evans function of Definition \ref{evans_f} \cite{kapitula&sandstede}.
\section*{Acknowledgments.}
\noindent The first author is partially supported by the PRIN 2022 Project 2022YXWSLR "Boundary analysis for dispersive and viscous fluids", by Istituto Nazionale di Alta Matematica through the GNAMPA Research Group, by the Italian Ministry of University and Research (MUR)
through the Excellence Department Project awarded to GSSI, CUP D13C22003740001. The second author is grateful to Diego Noja for interesting discussions on the instability of stationary solutions in Hamiltonian PDEs.
\bibliographystyle{siam}
\bibliography{biblio_1dsuperflow}

\end{document}